\begin{document}
 \newtheorem{theorem}{Theorem}[section]
 \newtheorem{lemma}[theorem]{Lemma}
 \newtheorem{proposition}[theorem]{Proposition}
 \newtheorem{property}[theorem]{Property}
 \newtheorem{corollary}[theorem]{Corollary}
 \theoremstyle{remark}
 \newtheorem{conjecture}{\bf Conjecture}
 \newtheorem{remark}{\bf Remark}
 \newtheorem{Example}{\bf Example}
 \newtheorem{question}{\bf Question}

\title{Perfect $2$-colorings of Hamming graphs}
\author{Evgeny A. Bespalov$^1$, Denis S. Krotov$^1$, Aleksandr A. Matiushev$^2$, \\ Anna A. Taranenko$^1$, Konstantin V. Vorob'ev$^1$}
\date{$^1$ Sobolev Institute of Mathematics, Novosibirsk, Russia \\ $^2$ Novosibirsk State University, Novosibirsk, Russia}

\maketitle

\begin{abstract}
We consider the problem of existence of perfect $2$-colorings (equitable $2$-partitions) of Hamming graphs with given parameters. We start with conditions on parameters of graphs and colorings that are necessary for their existence.  Next we observe known constructions of perfect colorings and propose some new ones giving new parameters. At last, we deduce which parameters of colorings are covered by these constructions and give tables of admissible parameters of $2$-colorings in Hamming graphs $H(n,q)$ for small $n$ and $q$.
Using the connection with perfect colorings, we construct an orthogonal array OA(2048,7,4,5).

\textbf{Keywords:} perfect coloring, equitable partition, Hamming graph, graph covering, perfect code, orthogonal array.
\end{abstract}


\section{Introduction}
An \emph{equitable $k$-partition}
of a graph $G=(V,E)$ (in general, a multigraph, i.e., loops and multiedges are allowed)
is a partition of the vertex set $V$ into 
$k$ nonempty \emph{cells} 
$V_1$, \ldots, $V_{k}$
such that every cell induces a regular subgraph
and the bipartite graph formed by the edges between
any two different cells is biregular.
Another equivalent name of such a partition is a \emph{perfect $k$-coloring},
which, formally, refers to a $k$-valued function on 
$V$ that defines an equitable partition: 
each cell of the partition is the preimage of some value.

In the current paper, we study perfect $2$-colorings
of the $q$-ary Hamming graph $H(n,q)$, mainly
focusing on the nonbinary case $q>2$ and on constructions that give new parameters of perfect $2$-colorings. 
A characterization of completely regular codes (including perfect $2$-colorings)
in $H(n,q)$ with the second largest eigenvalue $\theta_1 = n(q-1) - q$ is obtained in~\cite{Meyer:cyccond} (where the same question is also solved for Johnson and regular complete multipartite graphs), and perfect $2$-colorings of $H(n,q)$ with the third largest eigenvalue $\theta_2 = n(q-1) - 2q$ are described in~\cite{MV:eq2eigen}.
Parameters of $2$-colorings of the binary
Hamming graph $H(n,2)$, the hypercube,
were studied in 
\cite{Canogar:2000,FDF:CorrImmBound,FDF:PerfCol,FDF:12cube.en,KroVor:2020}. 
In \cite{FDF:PerfCol}, general constructions of
perfect $2$-colorings of the hypercube were described
(we also mention the conference 
work~\cite{Canogar:2000},
where some of the existence results were announced without proof). In \cite{FDF:CorrImmBound}, a bound on the correlation
immunity of boolean functions was proved, giving
a powerful non-feasibility test for parameters of perfect $2$-colorings of the hypercube. 
In \cite{FDF:12cube.en}, two special parameter sets
of perfect $2$-colorings of $H(n,2)$ were considered;
in one case, perfect colorings were constructed,
in the other case, the nonexistence was proved. 
In \cite{KroVor:2020}, 
the infeasibility of an infinite series of parameters of $2$-colorings in $H(n,2)$ was proved.
In this paper, we consider generalizations of 
the constructions from \cite{FDF:PerfCol} to the non-binary case $q>2$
and some new constructons for non-prime $q$.
The well-known connection of perfect colorings and orthogonal arrays
plays an important role in our study and, in particular,
allows us to construct new orthogonal arrays.
The perfect $2$-colorings 
 are equivalent 
 to a special case of 
 completely regular codes,
 namely, to the completely regular codes 
 of covering radius $1$.
 In a recent survey \cite{BRZ:CR}
 on completely regular codes,
 Borges, Rif\`a, and Zinoviev
 mentioned perfect $2$-colorings saying that 
 very little is known for $q>2$.
 Since there are more questions than answers in this area,
 our current work can be considered as a step forward
 from ``very little'' to ``little''.

The Hamming graphs belong to the class of distance-regular graphs, 
and we note that perfect colorings of distance-regular graphs attract 
much attention by several reasons.
Many classes of combinatorial configurations can be defined 
as perfect colorings with special parameters.
Examples of such configurations are perfect codes,
$t$-$(v,t+1,\lambda)$- and $t$-$(v,t+2,1)$-designs \cite[p.178]{Martin:94}, 
latin squares and latin hypercubes (equivalently, distance-$2$ MDS codes), 
MDS codes with distance $3$ \cite[Corol.\,6]{KKO:smallMDS}, transversals in latin squares.
In some cases,
optimal, by mean of some bound, objects are proven
to be in a one-to-one correspondence with 
perfect colorings with special parameters.
For example, unbalanced boolean functions attaining
the correlation-immunity bound \cite{FDF:CorrImmBound};
almost perfect codes and
some classes of optimal codes (see e.g. \cite{Kro:2m-3}, \cite{Kro:2m-4});
orthogonal arrays attaining the Bierbrauer--Friedman bound \cite{Bierbrauer:95,Friedman:92} induce perfect $2$-colorings
of  the Hamming graph
\cite{Potapov:2010}, \cite{Pot:2012:color};
binary orthogonal arrays attaining the
Bierbrauer--Gopalakrishnan--Stinson bound \cite{BGS:96}
induce perfect $3$-colorings
of the hypercube \cite{Kro:OA13}.
A very nice result of Potapov~\cite{Pot:2012:color}
shows a one-to-one correspondence between
the perfect $2$-colorings of the Hamming graph $H(n,q)$ and the boolean-valued functions on $H(n,q)$
attaining a bound that connects the correlation immunity of the function, the density of ones, 
and the average $0$-$1$-contact number
(the number of neighbors with function value $1$ for a given vertex with value $0$).
Besides the Hamming graphs, distance-regular graphs where perfect colorings
have been studied include Johnson graphs, see e.g. \cite{AvgMog:J63J73,GavGor:2013}, 
Latin-square graphs \cite{BCGG:2018}, 
halved hypercubes \cite{KMV:halved},
Grassmann graphs $G_q(n,2)$, see e.g. \cite{dWM:2020} and \cite{Matkin:2018}.
In finite geometry, perfect $2$-colorings are studied as intriguing sets, see e.g.~\cite{BLP:09}.
As an example of the study of perfect colorings of non-distance-regular graphs
yielding interesting and deep results we refer to \cite{Puz:2011en}.

Let us describe the structure of the paper. In Section~\ref{s:pre}, we introduce the main concepts related to perfect colorings in Hamming graphs and provide some easy observations. 

In Section~\ref{nessec}, we study conditions necessary for the existence of perfect colorings with given parameters.
Since Hamming graphs are distance-regular, 
we can specialize for them general results on
weight distributions of colors 
in distance-regular graphs. 
We also consider some other necessary
conditions based on algebraic 
and arithmetic properties 
of colorings and distributions 
of colors in faces of Hamming graphs.

Section~\ref{consrsec} is devoted to constructions of perfect colorings. Firstly, we give a series of constructions based on coverings of Hamming graphs. 
Next, we consider three direct constructions of perfect colorings based on
additive codes 
(Section~\ref{s:additive}, where, as a special case, we construct 
an orthogonal array OA$(2048,7,4,5)$), 
MDS codes, and $1$-perfect codes (Section~\ref{perfcodes}). 
Then, in Section~\ref{flaasssec}, we describe more complex constructions of perfect colorings combining some of the previous ones and providing new admissible parameters of colorings.

We summarize results on the admissibility of parameters of perfect colorings in Section~\ref{paramsec}. 
In particular, for cases when $q$ 
is a prime power we find sufficient conditions for the existence of $(b,c)$-coloring in $H(n,q)$ for $n$  greater or equal some $n_0$ and estimate the minimum value $n_0$ for which such colorings exist.

 Finally, in Appendix we provide tables 
 with admissibility statuses for parameters
 of colorings in $q$-ary $n$-dimensional
 Hamming graphs for small $n$ 
 and $q = 3$, $4$, and $6$.  
 
  We emphasize that all the results of our paper 
 that are applicable to the case $q=2$ 
 were proved for this case 
 by Fon-Der-Flaass \cite{FDF:PerfCol}.
 Moreover, some binary results 
 (bounds in \cite{FDF:CorrImmBound} and 
 \cite{FDF:PerfCol}, possibility to partition one color into lines in the main construction of \cite{FDF:PerfCol}, 
 special cases in \cite{FDF:12cube.en})
 were not generalized or generalized partially.
 On the other hand, new results for composite $q$ have no analogs 
 in the binary case.


\section{Notations, definitions, and easy observations}\label{s:pre}

Given $n$ and $q$, the \emph{Hamming graph $H(n,q)$} is the graph with the vertex set $\mathbb{Z}_q^n = \{ x= (x_1, \ldots, x_n)\mid x_i \in \mathbb{Z}_q \}$ such that vertices $x$ and $y$ are adjacent if and only if the Hamming distance (the number of different positions) $d(x,y)$ between them is $1$.
Observe that $H(n,q)$ 
is a connected $n(q-1)$-regular graph.
For any vertex $x$, its (Hamming) 
\emph{weight} $\mathrm{wt}(x)$ is defined to be the Hamming distance
from $x$ to the all-zero vertex $0^n$.
The binary 
Hamming graph $H(n,2)$ is also known as 
a \emph{hypercube}, or the \emph{$n$-cube}.

The \emph{spectrum} of the Hamming graph  $H(n,q)$
is the eigenspectrum of its adjacency matrix; 
it consists of the eigenvalues 
$\theta_i = n(q-1) - qi$ 
with multiplicity $\binom{n}{i} (q-1)^i$,
$i = 0, \ldots, n$, see e.g. \cite{Brouwer}.

For $k \in \{0, \ldots, n\}$, a
\emph{$k$-dimensional face}, or simply 
a \emph{$k$-face}, is a set of vertices
of $H(n,q)$ that induces a subgraph
isomorphic to $H(k,q)$.  
Faces of dimension $1$ are
essentially the maximal cliques 
in $H(n,q)$
and
often called \emph{lines}.

A multiset $C$ of vertices of $H(n,q)$
is called an \emph{orthogonal array}
of strength $t$, or OA$(|C|,n,q,t)$
if every $(n-t)$-face contains 
$|C|/q^t$ elements of $C$. 
In connection with perfect colorings, 
we will consider only \emph{simple} orthogonal arrays,
without multiple elements.

A \emph{perfect $k$-coloring} 
of a graph $G$ 
is a surjective function $f$
from the vertex set
to the set of \emph{colors} $\{ 1, \ldots, k\}$ 
such that  each vertex $x$ 
of color $i$ is adjacent
to exactly $s_{i,j}$ vertices
of color $j$, where $s_{i,j}$ is some constant 
 that does not depend on the choice of $x$. 
 The numbers $s_{i,j}$ are the \emph{parameters} of the perfect coloring and they  are the degrees of the regular and biregular subgraphs induced by the color sets.
The matrix $S = ( s_{i,j})$ 
of order $k$ 
is called the \emph{quotient matrix} 
of the perfect coloring $f$.

Let us have a closer look at parameters of perfect $2$-colorings of $H(n,q)$.
The quotient matrix of a perfect $2$-coloring $f$ is usually written as
$$S = \left( \begin{array}{cc}
a & b \\ c & d
\end{array} \right).$$
It  has two different eigenvalues: the \emph{trivial} eigenvalue $n(q-1)$ (the graph degree) and the \emph{second} eigenvalue $\theta = n(q-1) - (b + c) = a-c$. 
Since $a + b = c + d = n(q-1)$, 
 the parameters $b$ and $c$
uniquely define the matrix. 
Thus, we will say that $f$ 
is a \emph{$(b,c)$-coloring} 
if it has the quotient matrix $S$ as above.
Rearranging colors, 
we may assume that $b \geq c$. 

It is easy to prove 
that the number of vertices 
of the first color
in a perfect $(b,c)$-coloring 
of $H(n,q)$ is  equal to
$\frac{c}{b+c} \cdot  q^n$, 
while for  the second color it is 
$ \frac{b}{b+c} \cdot q^n$. 
Reducing factors, 
we find that the densities 
of the colors in the perfect
$(b,c)$-coloring $f$ 
are $\frac{c}{b+c}$  
and $\frac{b}{b+c}$ respectively,
or, equivalently, 
$\frac{c'}{b'+c'}$  
and $\frac{b'}{b'+c'}$,
where $b' =\frac{b}{\gcd(b,c)}$
and $c' =\frac{c}{\gcd(b,c)}$. 
Later we will see that the
parameters $b'$ and $c'$ 
not only define
the proportion of colors 
in a perfect $(b,c)$-coloring 
but play an important role
in the characterization
of admissible parameters of colorings.

At the end of this section,
we recall one 
straightforward application
of K\"onig's theorem,
which will be used 
in one of the constructions.

\begin{proposition}[\cite{FDF:PerfCol}] \label{dcmpboolclr}
Let $f$ be a perfect coloring of $H(n,2)$,
and let $i$ be one of the colors.
If $f^{-1}(i)$ is not an independent set, 
then it can be partitioned into
edges (lines). 
\end{proposition}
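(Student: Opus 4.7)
The plan is to interpret the induced subgraph on the color class as a regular bipartite graph and then invoke König's theorem (equivalently, the fact that every regular bipartite graph has a perfect matching) to obtain the desired partition into edges.

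First I would set $A = f^{-1}(a)$ and let $s = s_{a,a}$ be the corresponding diagonal entry of the quotient matrix of $f$. By the definition of a perfect coloring, every vertex of $A$ has exactly $s$ neighbors inside $A$, so the induced subgraph $H(n,2)[A]$ is $s$-regular. The assumption that $A$ is not independent translates to $s \geq 1$.

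Next I would use that $H(n,2)$ is bipartite: the partition of $\mathbb{Z}_2^n$ into even-weight and odd-weight vertices makes every edge cross the partition. Consequently $H(n,2)[A]$ is an $s$-regular bipartite graph with $s \geq 1$. By König's theorem, such a graph has a perfect matching $M$. Because the edges of $H(n,2)[A]$ are edges of $H(n,2)$, and the edges of $H(n,2)$ are precisely the $1$-faces (lines), the matching $M$ is a collection of lines covering every vertex of $A$ exactly once, which gives the required partition.

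There is really no genuine obstacle here; the only thing to be careful about is observing the two small ingredients (regularity of the induced subgraph from the definition of a perfect coloring, and bipartiteness of the hypercube from the weight parity) before calling on König's theorem. Note also that this argument is specific to $q=2$: for $q>2$ the induced subgraph is no longer bipartite in general, and, more importantly, a line in $H(n,q)$ has $q$ vertices rather than $2$, so ``partitioning into lines'' is a very different requirement.
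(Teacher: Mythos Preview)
Your argument is correct and matches exactly what the paper intends: the paper does not spell out a proof but introduces the proposition as ``a straightforward application of K\"onig's theorem,'' and your write-up supplies precisely those details (regularity of the induced subgraph from the quotient matrix, bipartiteness of the hypercube via weight parity, then a perfect matching in a regular bipartite graph). Nothing to add.
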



\section{Necessary conditions on parameters of perfect colorings} \label{nessec}

We start with one simple algebraic condition on parameters of $(b,c)$-colorings.
It is well known \cite[Ch.\,5, Lemma 2.2]{Godsil93} that the spectrum of the quotient matrix of a perfect coloring in a graph is contained 
in the graph spectrum.
Since all eigenvalues of the Hamming graph $H(n,q)$ 
have the form $\theta_i = n(q-1) - qi$,
where $i = 0, \ldots, n$,
and the second eigenvalue of a $(b,c)$-coloring of $H(n,q)$ 
is $\theta = n(q-1) - (b+c)$, we have the following.

\begin{proposition}
If there exists a $(b,c)$-coloring 
of $H(n,q)$, then 
$b + c = q i$ 
for some $i \in \{ 1, \ldots, n\}$.
\end{proposition}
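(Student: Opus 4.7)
The plan is to assemble the proof directly from two ingredients already laid out in the excerpt: the description of the spectrum of $H(n,q)$, and the fact (Godsil, Ch.\,5, Lemma 2.2, cited above) that every eigenvalue of the quotient matrix $S$ of a perfect coloring is an eigenvalue of the host graph. Since the statement is essentially a one-line consequence of what precedes it, the write-up will be short; the only minor obstacle is to justify why the index $i$ may be taken strictly positive.

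First I would note that the quotient matrix $S$ of a perfect $2$-coloring has exactly two eigenvalues, the trivial one $n(q-1)$ (arising from the all-ones eigenvector of $S$) and the main eigenvalue $\lambda = a-c = n(q-1)-(b+c)$, as computed in the excerpt. By the cited lemma, $\lambda$ must occur among the Hamming-graph eigenvalues $\theta_i = n(q-1)-qi$ for some $i\in\{0,1,\ldots,n\}$. Equating $n(q-1)-(b+c)=n(q-1)-qi$ immediately yields $b+c=qi$.

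Finally I would rule out $i=0$. If $i=0$, then $b+c=0$, forcing $b=c=0$; but then no edge of $H(n,q)$ joins vertices of different colors, so the cells $f^{-1}(1)$ and $f^{-1}(2)$ are unions of connected components. Since $H(n,q)$ is connected and both cells are nonempty (by the surjectivity built into the definition of perfect coloring), this is impossible. Hence $i\in\{1,\ldots,n\}$, completing the proof.
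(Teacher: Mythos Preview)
Your proof is correct and follows exactly the approach of the paper: both match the main eigenvalue $\lambda=n(q-1)-(b+c)$ against the Hamming-graph spectrum $\theta_i=n(q-1)-qi$ via the cited lemma on quotient matrices. Your write-up is in fact slightly more complete, since you explicitly justify $i\ne 0$ using connectivity and surjectivity, a point the paper leaves implicit.
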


\subsection{Perfect colorings and orthogonal arrays}

 For perfect colorings of $H(n,q)$, 
 it is known that all large faces 
(of dimension greater than 
$(n+\theta)/q$ where $\theta$ is the second largest 
eigenvalue of the quotient matrix)
 have the same densities of colors as in the whole graph, 
 {see e.g. \cite[Proposition~1]{Kro:OA13}, 
 which is essentially a special case 
 of~\cite[Theorem~4.4]{Delsarte:1973}}. 
 In particular, 
 for $2$-colorings we have

\begin{proposition} \label{corrimmune}
Each color of a $(b,c)$-coloring 
of $H(n,q)$ forms an orthogonal array of strength
$\frac{b+c}{q} -1 $.
\end{proposition}

Based on this fact,
we deduce the following 
necessary condition 
on the quotient matrices of 
$2$-colorings. 
A special case of this condition 
was considered 
in \cite{HedRoos:2011} 
to show the nonexistence
of some $1$-perfect codes
over non-prime-power alphabets.

\begin{theorem} \label{corbound}
Assume that there exists a perfect $(b,c)$-coloring
of $H(n,q)$.
Then $b' + c'$ divides 
the cardinality $q^k$ of a $k$-face,
where $k={n - \frac{b+c}{q} +1}$, 
$b'=\frac{b}{\gcd(b,c)}$, 
and $c'=\frac{c}{\gcd(b,c)}$.
\end{theorem}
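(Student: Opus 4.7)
The plan is to deduce this divisibility directly from Proposition~\ref{corrimmune} by counting vertices of one fixed color inside a face of the smallest dimension to which that proposition applies.

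First, set $k := n - \frac{b+c}{q} + 1$. Note that by the previous proposition $b+c = qi$ for some $i \in \{1,\dots,n\}$, so $k$ is a well-defined integer in the range $1 \le k \le n$ (and faces of that dimension exist in $H(n,q)$). By Proposition~\ref{corrimmune}, every $k$-dimensional face of $H(n,q)$ contains the same number of vertices of each color. Since the densities of the two colors in the whole graph are $\frac{c}{b+c} = \frac{c'}{b'+c'}$ and $\frac{b}{b+c} = \frac{b'}{b'+c'}$, and a $k$-face has exactly $q^k$ vertices, this common count of color-$1$ vertices inside any $k$-face equals
\[
\frac{c'}{b'+c'}\,q^{k} \;=\; \frac{c'}{b'+c'}\,q^{\,n-\frac{b+c}{q}+1}.
\]

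Next I would observe that this count is an integer, so $b'+c'$ divides $c'\,q^{n-(b+c)/q+1}$. Since $\gcd(b',c')=1$ by definition, we also have $\gcd(b'+c',c')=\gcd(b',c')=1$, so $b'+c'$ must divide the remaining factor $q^{n-(b+c)/q+1}$, which is exactly the desired conclusion.

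The only subtle step is the coprimality argument $\gcd(b'+c',c')=1$, but it is a one-line consequence of $\gcd(b',c')=1$, so there is no real obstacle. Everything else is bookkeeping: the main content is already packaged in Proposition~\ref{corrimmune}, and the theorem is essentially the integrality constraint extracted from it at the critical face dimension.
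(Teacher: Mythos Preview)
Your proof is correct and follows essentially the same approach as the paper: apply Proposition~\ref{corrimmune} at the critical face dimension $k=n-\frac{b+c}{q}+1$, use that the number of vertices of a given color in a $k$-face equals $\frac{c'}{b'+c'}q^k$, and extract the divisibility from integrality. Your version is in fact slightly more explicit than the paper's, since you spell out the coprimality step $\gcd(b'+c',c')=\gcd(b',c')=1$ that the paper leaves implicit.
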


\begin{proof}
 By Proposition~\ref{corrimmune}, 
 for a $(b,c)$-coloring
 the densities of colors in each $k$-face
 are the same as in $H(n,q)$.   
 Recall that the colors  
 have densities  $\frac{b'}{b'+c'}$  
 and $\frac{c'}{b'+c'}$. 
 Since each $k$-face has 
 an integer number of vertices 
 of each color, 
 we conclude that $b' + c'$ 
 divides 
 $q^k$.
\end{proof}

If $c=1$, 
we can strengthen the conclusion of the theorem.

\begin{theorem} \label{c1cond}
Assume that there exists a perfect $(b,1)$-coloring
of $H(n,q)$. 
Then $b$ is divisible by $q-1$. 
Moreover, if $q = p^s$ 
for some prime $p$,
then $b + 1 = q^r$ 
for  some $r \in \mathbb{N}$.
\end{theorem}

\begin{proof}
For a $(b,1)$-coloring $f$,
each vertex of the second color has exactly one neighbor 
of the first color. 
It yields  that 
each line 
of $H(n,q)$ contains 
$0$, $1$ or $q$ vertices 
of the first color and
$q$, $q-1$ or $0$ vertices 
of the second color.
In particular, 
every line containing 
a given vertex 
of the first color 
has either $q-1$ or $0$ vertices 
of the second color.
So $b$ is divisible by $q-1$.

Let $b = m(q-1)$ for some 
$m \in \mathbb{N}$. 
Suppose that $q = p^s$ 
for some prime $p$.  
Since $b$ and $c=1$ are relatively prime,
Theorem~\ref{corbound} implies that 
$b + 1$ divides $q^r$ 
for some $r \in \mathbb{N}$ 
and $b + 1 = p^j$ 
for some $j \in \mathbb{N}$. 
It is not hard to see that  
$b = p^j - 1$  
is divisible by 
$q-1 = p^s - 1$ if and only 
if $j = rs$ for some $r \in \mathbb{N}$.
Thus  $b + 1 = q^r$.
\end{proof}

In hypercubes $H(n,2)$, 
there is an additional 
necessary condition 
on parameters of 
$(b,c)$-colorings.

\begin{lemma}[\cite{FDF:CorrImmBound}] \label{Fcorbound}
If there exists a $(b,c)$-coloring 
of $H(n,2)$ 
with $b \neq c$, 
then $n \geq \frac{3}{4}(b+c)$.
\end{lemma}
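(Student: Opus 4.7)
The plan is to translate the perfect coloring statement into a statement about correlation-immune Boolean functions and then apply a Fourier-analytic argument on the hypercube.

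First, I would invoke Proposition~\ref{corrimmune} with $q=2$: every $k$-face of $H(n,2)$ with $k \geq n - (b+c)/2 + 1$ contains the same number of vertices of each color as the whole graph. Interpreting one color class as the preimage of $1$ under a Boolean function $f\colon\{0,1\}^n\to\{0,1\}$, this is exactly the statement that $f$ is \emph{correlation-immune of order} $t := (b+c)/2 - 1$: fixing any $t$ coordinates does not change the proportion of $1$'s among the remaining $2^{n-t}$ vertices. Since $b\neq c$, the proportion of $1$'s is not $1/2$, so $f$ is unbalanced. With this reformulation, the lemma reduces to the Fon-Der-Flaass correlation immunity bound: an unbalanced correlation-immune Boolean function of order $t$ on $\{0,1\}^n$ satisfies $t \leq 2n/3 - 1$, which after substitution rearranges to $n \geq \tfrac{3}{4}(b+c)$.

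For the correlation immunity bound itself, I would work with $g(x) := 1-2f(x) \in\{\pm 1\}$ and its Walsh--Hadamard expansion $g(x) = \sum_{S} \widehat{g}(S)\,(-1)^{S\cdot x}$. Correlation immunity of order $t$ means $\widehat{g}(S)=0$ for all $1\leq |S|\leq t$, while unbalancedness means $\widehat{g}(\emptyset)\neq 0$. Since $f$ is a perfect coloring with main eigenvalue $\lambda = n - (b+c) = n-2(t+1)$, the characteristic vector of its color classes decomposes into only the all-ones eigenvector and a non-zero eigenvector $v$ living purely in the level-$(t+1)$ Fourier subspace. Crucially, $v$ takes only two values on the vertex set (one on each color class), so $v^2$ is also a linear combination of the two color-indicator vectors; hence the Fourier support of $v^2$ lies in levels $\{0, t+1\}$. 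On the other hand, $v^2$ is a convolution of two level-$(t+1)$ objects, so its Fourier support lies in even levels up to $2(t+1)$. Combining these two constraints pins down the interaction between $t+1$ and $n$.

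The hard part, and the technical heart of the argument, is turning this two-valued-eigenfunction structure into the quantitative bound $t+1 \leq 2n/3$. The plan is to compute $\sum_{x,y,z:\,x\oplus y\oplus z=0} g(x)g(y)g(z)$ in two ways: on one hand it equals $2^{2n}\sum_{S}\widehat{g}(S)^3$, and correlation immunity forces all surviving terms to have $|S|\geq t+1$; on the other hand, it is a combinatorial count that the unbalancedness of $g$ forces to be bounded in absolute value by a quantity polynomial in $2^n$. Balancing these two bounds, using Parseval to control $\sum_{|S|\geq t+1}\widehat{g}(S)^2$, should yield the desired inequality $3(t+1) \leq 2n$. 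I expect the main obstacle to be the careful bookkeeping of the cubic moment and verifying that the extremal case (tight inequality) really does correspond to $n = \tfrac{3}{4}(b+c)$ rather than a looser constant; a secondary subtlety is ensuring the argument handles all parities of $b+c$, since the level $(b+c)/2$ must be an integer, a fact that the algebraic eigenvalue condition already guarantees.
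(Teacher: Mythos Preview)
The paper does not prove this lemma; it is quoted from \cite{FDF:CorrImmBound} and immediately followed by the remark that it is ``a special case of the bound on the correlation immunity of boolean functions proved in \cite{FDF:CorrImmBound}.'' So your first paragraph---reducing via Proposition~\ref{corrimmune} to the statement that an unbalanced Boolean function cannot be correlation-immune of order exceeding $2n/3-1$---is exactly the paper's own reduction, and is correct.

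Where your proposal has a genuine gap is in the proof of the correlation-immunity bound itself. Your $v^2$ observation is valid but proves less than you suggest: writing $v^2=\alpha v+\beta$ shows $\widehat{v^2}$ is supported on levels $\{0,t+1\}$, while the convolution structure shows it is supported on even levels up to $2(t+1)$. Combining these only forces $t+1$ to be even when $\alpha\neq 0$; the dimension $n$ does not appear at all in this argument, so it cannot by itself produce the inequality $3(t+1)\le 2n$. As for the cubic sum $\sum_{x\oplus y\oplus z=0}g(x)g(y)g(z)=2^{2n}\sum_S\widehat g(S)^3$: the left side is trivially bounded by $2^{2n}$, and Parseval gives $\sum_S\widehat g(S)^2=1$, but you never explain how these combine to bound $t+1$ in terms of $n$. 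The parameter $n$ can only enter through the \emph{number} of sets $S$ with $|S|=t+1$, or through restrictions on which triples $(S_1,S_2,S_3)$ with $S_1\triangle S_2\triangle S_3=\emptyset$ can exist inside $[n]$, and you do not invoke either. Fon-Der-Flaass's actual argument is short but uses a specific combinatorial identity (counting over certain configurations of vertices) that is not the naive cubic moment; your sketch, as written, does not close this gap.
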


This bound is a special case of the 
bound on the correlation immunity 
of boolean functions (equivalently, the strength
of simple binary orthogonal arrays) proved
in~\cite{FDF:CorrImmBound} and extended to
non-simple orthogonal arrays in~\cite{Khalyavin:2010.en}.
Generalizing this bound to $q>2$
is an open research problem.

\begin{conjecture}
If there exists a $(b,c)$-coloring 
in the Hamming graph $H(n,q)$ 
with $b' + c' > q$,
where $b'=\frac{b}{\gcd(b,c)}$,
$c'=\frac{c}{\gcd(b,c)}$,
then  $n \geq \frac{q+1}{q^2}(b+c)$.
\end{conjecture}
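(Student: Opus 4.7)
The plan is to lift Fon-Der-Flaass's Fourier-analytic proof of Lemma~\ref{Fcorbound} from $q=2$ to the $q$-ary Hamming graph, using characters on $\mathbb{Z}_q^n$. Let $f:\mathbb{Z}_q^n\to\{0,1\}$ be the indicator of the first color, set $\delta=c/(b+c)$, $g=f-\delta$, and $t+1:=(b+c)/q$. A direct computation using the quotient matrix gives $Ag=(a-c)g$, where $A$ is the adjacency operator, so $g$ lies in the $\theta_{t+1}$-eigenspace of $A$. In the character basis $\chi_\alpha(x)=\omega^{\alpha\cdot x}$ with $\omega=e^{2\pi i/q}$, and using that $A\chi_\alpha=\theta_{\mathrm{wt}(\alpha)}\chi_\alpha$, this forces $\hat f(\alpha)=0$ for every $\alpha$ with $\alpha\neq 0$ and $\mathrm{wt}(\alpha)\neq t+1$. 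This is strictly stronger than the $t$-correlation immunity supplied by Proposition~\ref{corrimmune}.

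I would then exploit the hypothesis $b'+c'>q$, which forbids $\delta=c'/(b'+c')$ from having the form $k/q$ and thus rules out the ``degenerate'' colorings (e.g.\ those determined by a single coordinate, whose densities are always multiples of $1/q$). Next, the boolean identity $f(1-f)\equiv 0$ translates to the pointwise relation
\[
g^2=\delta(1-\delta)+(1-2\delta)\,g.
\]
Expanded in characters, the Fourier support of $g^2$ lies in the sumset $\{\alpha+\beta:\mathrm{wt}(\alpha)=\mathrm{wt}(\beta)=t+1\}$. Comparing coefficients at weights outside $\{0,t+1\}$ yields a system of quadratic vanishing identities in the $\hat f(\alpha)$'s, and Parseval gives $\sum_\alpha|\hat f(\alpha)|^2=\delta(1-\delta)$. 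The strategy is to combine these constraints with the arithmetic restriction on $\delta$ to conclude that the $(q-1)^{t+1}\binom{n}{t+1}$ characters of weight $t+1$ are too few to sustain a $\{0,1\}$-valued function with such a density, unless $n\geq(q+1)(t+1)/q=(q+1)(b+c)/q^2$.

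The main obstacle is extracting the sharp constant $(q+1)/q^2$ from these constraints. For $q=2$, Fon-Der-Flaass proceeds via a delicate averaging over carefully chosen families of subcubes, crucially using the real-valuedness of binary characters; for $q>2$ the characters are complex, phase cancellations are harder to control, and a direct complexification tends to lose essential factors. A promising alternative is to invoke Potapov's correspondence~\cite{Pot:2012:color} to recast the conjecture as a tight correlation-immunity bound for $\{0,1\}$-valued functions on $\mathbb{Z}_q^n$, and then attack it by a Krawtchouk/linear-programming argument in the spirit of the Bierbrauer--Friedman bound~\cite{Bierbrauer:95,Friedman:92}. The fact that the statement is posed as a conjecture rather than a theorem strongly suggests that some new ingredient beyond a direct generalization of the binary proof will be required.
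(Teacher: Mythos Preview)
The paper contains no proof of this statement: it is explicitly posed as an open conjecture, preceded by the remark that ``generalizing this bound to $q>2$ is an open research problem.'' There is therefore nothing in the paper to compare your argument against.

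Your proposal is not a proof either, and you essentially say so yourself. The Fourier set-up is correct: with $g=f-\delta$ one has $Ag=\theta_{t+1}g$ for $t+1=(b+c)/q$, so $\widehat f$ is supported on $\{0\}\cup\{\alpha:\mathrm{wt}(\alpha)=t+1\}$; the identity $g^2=\delta(1-\delta)+(1-2\delta)g$ and the Parseval relation are stated accurately. But from that point on you offer only a plan (``combine these constraints \ldots\ to conclude'', ``a promising alternative is to invoke Potapov's correspondence'', ``attack it by a Krawtchouk/linear-programming argument''), not an argument. The step you label ``the main obstacle''---extracting the sharp constant $(q+1)/q^2$ from the quadratic vanishing identities and the density restriction---is precisely the content of the conjecture, and nothing in the proposal bridges it. In particular, invoking the Bierbrauer--Friedman bound gives at best $n\ge (b+c)/q$, a factor of $(q+1)/q$ short; and Potapov's correspondence reformulates the question rather than resolving it.

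So the honest summary is: your framework is the natural one, your diagnosis of where the difficulty lies is accurate, and the problem remains open exactly as the paper states.
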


\begin{conjecture} 
Let  
$C$ be an OA$(|C|,n,q,t)$.
If (a) $|C|<q^{n-1}$ or (b) $C$ is simple and 
$|C|$ is not divisible by $q^{n-1}$, 
then $n-t \geq \frac{n}{q+1}+1$.
\end{conjecture}

\subsection{Weight distribution theorems and generalizations}

In this section, we briefly discuss very strong properties of perfect colorings known as the distance invariance 
and its generalizations. In our experience, 
we could not find examples
when these invariants
reject some putative quotient
matrix of a perfect $2$-coloring 
of Hamming graphs in a non-binary case.
However, they do work for that purpose when the number of colors is larger than $2$, 
and the potential of the development of the theory makes these methods necessary 
to mention here. 
The general method to reject a putative quotient matrix
using distance invariants is the following: 
if there is a way to calculate the number 
of groups vertices with some properties 
using only the parameters of the graph 
and the quotient matrix, but the calculations 
result in a negative or non-integer number,
then perfect colorings with such a quotient matrix do not exist.

The simplest distance-invariance property says that the weight distributions of all colors
of a perfect coloring depend only on 
the color of the initial vertex 
and
the parameters of the graph and the coloring.
(The weight distribution is the multiset of distances from the initial vertex to the vertices of the given color.)
This property holds for perfect colorings of any distance-regular graph and
generalizes
the well known Shapiro--Slotnick--Lloyd
theorem~\cite{Lloyd,ShSl} on the distance invariance of perfect codes.
In its turn, it can be generalized to the possibility to calculate the weight distribution
of the coloring with respect to any completely regular code, not only a single vertex,
see e.g.~\cite{Kro:struct}.

Another generalization of the distance invariance \cite{Vas09:inter,Vas:2019:local,Hyun:2012:local}
connects the weight distributions of a perfect coloring
of a Hamming graph in two orthogonal faces (so-called local weight distributions).
Inexplicitly, this connection was utilized in the bounds 
\cite[Theorems~1,2]{FDF:PerfCol} for perfect $2$-colorings of $H(n,2)$.

The strongest of known generalizations~\cite{Vas09:inter} of the distance invariance
 enables one to calculate (see formulas in~\cite{Kro:interw}) the multi-parameter invariants called 
interweight distributions of perfect colorings. 
These invariants work well for proving the nonexistence
of perfect colorings in $H(n,q)$ if $q=2$, see e.g.~\cite{CRCtable},
but examples~\cite{Kro:interw} show that their natural generalization
is not invariant in the non-binary case.
The existence of similar strong invariants for $q>2$ is an open problem.


\section{Constructions of perfect colorings} \label{consrsec}

\subsection{Covering-based constructions} \label{coverconstrsec}

In this section,
we consider a series of constructions 
that allow us to obtain perfect colorings 
of a Hamming graph 
on the base of perfect colorings 
of a smaller Hamming graph. 
All these constructions
utilize graph coverings. 
In order to describe them,
it is convenient  
to represent Hamming graphs 
as Cayley graphs.

Let $\Gamma$ be a finite group and 
$A \subset \Gamma$ be a (multi)set 
of $\Gamma$ such that $A = A^{-1}$. 
The \emph{Cayley (multi)graph
$\mathrm{Cay}(\Gamma,A)$ with the connecting set $A$}
is a (multi)graph with the vertex set 
$\Gamma$ and the edge set 
$\{ (g,ga) \mid  g \in \Gamma, a \in A \}.$
For convenience, we assume everywhere that
$\Gamma$ is an abelian group.

The Hamming graph $H(n,q)$ 
is  the Cayley graph $\mathrm{Cay}(\mathbb{Z}_q^n, \mathcal{I}(n,q))$ 
with the connecting set $\mathcal{I}(n,q)$ 
consisting of all vertices of weight $1$.

A multigraph $G = (V,E)$ 
is said to \emph{cover} 
a multigraph $H = (U,W)$ 
if there exists 
a surjective function 
$\varphi : V \rightarrow U$,
called a \emph{covering},
such that for each $x \in V$ 
the equality 
$\{\varphi(y) \mid  (y,x) \in E  \} 
 = \{ w  \mid  (w,\varphi(x)) \in W\}$ 
holds as for multisets.
It is straightforward 
that a covering is 
a perfect coloring of $G$ 
with the quotient matrix $S$
equal to the adjacency matrix of $H$.  
Here under the adjacency matrix $M$ of a multigraph $H$ 
we mean a matrix with entries $m_{i,j}$ equal to 
the number of edges between vertices $i$ and $j$ 
and entries $m_{i,i}$ equal to the number of loops.

There are natural 
coverings of Cayley graphs 
based on the following straightforward and well-known fact. 
Recall that $\varphi: \Gamma \rightarrow \Gamma'$ 
is a \emph{homomorphism} 
between groups  $(\Gamma,*)$ 
and $(\Gamma',\star)$
if for all $g,h \in \Gamma$ 
we have 
$\varphi(g * h) 
= \varphi(g) \star \varphi(h)$.

\begin{lemma} \label{homocover}
Let 
$\varphi: \Gamma \rightarrow \Gamma'$ 
be a surjective homomorphism between groups 
$\Gamma$ and $\Gamma'$, 
and let $A \subset \Gamma$ 
be a multiset such that $A = A^{-1}$. 
Then $\varphi$ is a covering
of $\mathrm{Cay}(\Gamma', \varphi(A))$ 
by $\mathrm{Cay}(\Gamma, A)$.
\end{lemma}

All constructions of perfect colorings 
of $H(n,q)$ 
in this section  are based 
on the following 
known and straightforward fact.
\begin{lemma} 
\label{colorincover}
 Suppose that $f$ is a perfect coloring 
 of a multigraph $H$ 
 with the quotient matrix $S$.
 If $\varphi$ 
 is a covering of  $H$ 
 by a multigraph $G$,
 then  $f \circ \varphi$ 
 is a perfect coloring of $G$
 with the quotient matrix $S$.
\end{lemma}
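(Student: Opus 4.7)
The plan is a direct verification using the defining property of a covering, which was stated in the definition above as the multiset equality
\[
\{\varphi(u) \mid (u,v) \in E\} = \{w \mid (w,\varphi(v)) \in W\}.
\]
The key observation is that this multiset equality is exactly what is needed to transfer a perfect-coloring count from $H$ back to $G$.

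First I would fix an arbitrary vertex $v \in V(G)$ and set $u := \varphi(v) \in V(H)$. The color of $v$ under $\varphi \circ f$ is by definition $i := f(u)$. To prove that $\varphi \circ f$ is a perfect coloring with quotient matrix $S = (s_{i,j})$, I need to show that for every color $j$, the number of neighbors $w$ of $v$ in $G$ with $(\varphi \circ f)(w) = j$ is exactly $s_{i,j}$.

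Next I would rewrite this count: by definition, the number of neighbors $w$ of $v$ in $G$ with $f(\varphi(w)) = j$ equals the number of occurrences of color $j$ in the multiset $\{f(\varphi(w)) \mid (w,v) \in E\}$. Using the covering property of $\varphi$, the multiset $\{\varphi(w) \mid (w,v) \in E\}$ coincides with the multiset $\{u' \mid (u',u) \in W\}$ of neighbors of $u$ in $H$, so applying $f$ to both gives
\[
\{f(\varphi(w)) \mid (w,v) \in E\} = \{f(u') \mid (u',u) \in W\}
\]
as multisets. Hence the count of color $j$ among neighbors of $v$ in $G$ equals the count of color $j$ among neighbors of $u$ in $H$, which by the assumption that $f$ is a perfect coloring of $H$ with quotient matrix $S$ equals $s_{i,j}$.

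Since $v$ was arbitrary and the value $s_{i,j}$ depends only on $i = (\varphi \circ f)(v)$, this shows $\varphi \circ f$ is a perfect coloring of $G$ with the same quotient matrix $S$, as required. There is no real obstacle here; the only point requiring slight care is respecting the multiset (rather than set) semantics, so that multiedges in the multigraphs $G$ and $H$ are counted with the correct multiplicity — but this is precisely the form in which the covering property was stated above, so the argument goes through without modification.
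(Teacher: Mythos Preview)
Your proof is correct and is exactly the straightforward verification the paper has in mind; the paper does not actually prove this lemma, stating it only as a ``known and straightforward fact.'' The one small omission is that a perfect coloring is required to be surjective onto the set of colors, but this is immediate since both $\varphi$ (by definition of a covering) and $f$ (by definition of a perfect coloring) are surjective.
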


Given a multigraph $G$ 
and $u,v \in \mathbb{N}$, 
we denote by $G + u I$ 
the multigraph obtained from $G$ 
by adding $u$ loops to each vertex of $G$ 
and by $vG$ the multigraph 
in which every edge and loop of $G$ has 
the multiplicity $v$ times larger 
than in $G$.
(We make an agreement that each 
loop contributes $1$ to the degree
of the corresponding vertex.)
If $G$ is a Cayley multigraph 
$\mathrm{Cay}(\Gamma,A)$,
then the  multigraph 
$G + u I$ can be represented as 
$\mathrm{Cay}(\Gamma, 
A \cup \{ u \times \bar 0\})$,
where $ u \times \bar 0$  is the identity element $\bar 0$
of $\Gamma$ with multiplicity $ u $.
Similarly, the multigraph $ v G$ coincides with 
$\mathrm{Cay}(\Gamma, v A)$, 
where $ v A$ is the multiset obtained from $A$
by multiplying all multiplicities by $ v $.

The definitions 
of a perfect coloring 
and multigraphs $G + u I$ and $ v G$ 
imply the following.

\begin{lemma} \label{colorinplustimes}
\item \begin{enumerate}
\item If $f$ is a perfect coloring 
of a multigraph $G$ 
with the quotient matrix $S$, 
then $f$ is a perfect coloring 
of the multigraph $G + u I$ 
with the quotient matrix $S + u I$.
\item If $f$ is a perfect coloring of a multigraph $G$ with the quotient matrix $S$, 
then $f$ is a perfect coloring of the multigraph $ v G$ with the quotient matrix $ v S$.
\end{enumerate}
\end{lemma}

We consider a series of graphs
and multigraphs that can be covered
by Hamming graphs. 

\begin{proposition} \label{coverHam}
\item \begin{enumerate}
\item The Hamming graph $H(n+ u ,q)$ covers 
the multigraph $H(n,q) + u (q-1)I$.
\item The Hamming graph $H(v n,q)$ covers 
the multigraph $ v H(n,q)$.
\item The Hamming graph $H(n,pq)$ 
covers the multigraph $ pH(n,q) + n(p-1)I$.
\end{enumerate}

\end{proposition}

\begin{proof}
Let us prove  (1).  Consider the group homomorphism  
$\varphi:  \mathbb{Z}_q^{n+ u } \rightarrow \mathbb{Z}_q^n$ 
defined by the equation
$$
\varphi (x_1, \ldots, x_n , 
              \ldots,  x_{n+ u }) 
= (x_1, \ldots, x_n).
$$
It is straightforward to see the following:
if $\mathcal{I}(n+ u ,q)$ 
is the set of all elements of 
$\mathbb{Z}_q^{n+ u }$ 
at distance $1$ 
from the identity element, 
then the multiset 
$B: = \varphi(\mathcal{I}(n+ u ,q)) $ 
is equal to 
$ \mathcal{I}(n,q) \cup \{ u (q-1)\times \bar 0\}$.
Since $\mathrm{Cay}(\mathbb{Z}_q^n, B)$
is exactly the multigraph 
$H(n,q) + u (q-1)I$, 
Lemma~\ref{homocover} 
implies that $\varphi$ 
is a covering of 
$H(n,q) + u (q-1)I$ 
by $H(n+ u ,q)$.

The proofs of (2) and (3) are similar to (1).  To prove (2), we use the group homomorphism  
$$\varphi (x_1, \ldots, x_{ v n}) = (x_1 + \cdots + x_v, \ldots, x_{ v (n-1)+1} + \cdots + x_{ v n}),$$
where $+$ is the operation of $\mathbb{Z}_q$, and for (3) we utilize the homomorphism
$$
\varphi (x_1, \ldots, x_{n}) = 
(x_1, \ldots, x_n ) \bmod q; 
\quad x_{i}\in \mathbb{Z}_{pq}.
$$
\end{proof}

\begin{remark}
There are many other ways
to choose
homomorphisms $\varphi$ 
in the proof of 
Proposition~\ref{coverHam}. 
For instance, 
it can be checked that 
if $g^1, \ldots, g^n $
are arbitrary $v$-ary quasigroups 
of order $q$ such that
$g^i(0, \ldots, 0) = 0$ 
for all $i$ 
then the operation 
$$
\varphi  (x_1, \ldots, x_{ v n})= (g^1(x_1, \ldots, x_v), \ldots, g^n(x_{ v (n-1)+1}, \ldots, x_{ v n}))
$$
is a group homomorphism
with properties
required in
Proposition~\ref{coverHam}(2).
\end{remark}

Using Lemmas~\ref{colorincover}, \ref{colorinplustimes} 
and Proposition~\ref{coverHam}, 
we obtain 
the following constructions 
of perfect colorings 
in Hamming graphs.

\begin{theorem}[{\cite[Prop.\,33(ii)]{BRZ:CR}}%
\footnote{There is a misprint in the parameters of the resulting coloring in \cite[Prop.\,33(ii)]{BRZ:CR}}] \label{diagconstr}
For every perfect coloring 
of $H(n,q)$ 
with quotient matrix $S$
and every positive integer $ u $, 
there exists a perfect coloring 
of $H(n+ u , q)$ 
with quotient matrix 
$S + u (q-1)I$. 
In particular, 
if there is a perfect $(b,c)$-coloring
of $H(n,q)$, then there is a perfect $(b,c)$-coloring
of $H(n+ u , q)$, $ u =1,2,\ldots$.
\end{theorem}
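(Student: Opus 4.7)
The plan is to chain together the results already established in this subsection, so the proof is essentially a one-line composition once the ingredients are assembled.

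First I would take the given perfect coloring $f$ of $H(n,q)$ with quotient matrix $S$, and invoke Lemma~\ref{colorinplus} with $t$ replaced by $t(q-1)$: since adding $t(q-1)$ loops at each vertex contributes exactly $t(q-1)$ more neighbors of the same color to every vertex, $f$ remains a perfect coloring of the multigraph $H(n,q) + t(q-1)I$, now with quotient matrix $S + t(q-1)I$.

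Next I would apply Proposition~\ref{covertplus}, which supplies a covering $\varphi : H(n+t,q) \to H(n,q) + t(q-1)I$. Then Lemma~\ref{colorincover} pulls the coloring back: $\varphi \circ f$ is a perfect coloring of $H(n+t,q)$ whose quotient matrix is exactly $S + t(q-1)I$, as required.

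For the $(b,c)$-coloring consequence, I would just observe that if $S = \bigl(\begin{smallmatrix} a & b \\ c & d \end{smallmatrix}\bigr)$ with $a+b = c+d = n(q-1)$, then $S + t(q-1)I = \bigl(\begin{smallmatrix} a+t(q-1) & b \\ c & d+t(q-1) \end{smallmatrix}\bigr)$, whose off-diagonal entries are still $b$ and $c$ and whose row sums are $(n+t)(q-1)$. Hence the resulting coloring is again a $(b,c)$-coloring, now in $H(n+t,q)$, and iterating over $t = 1, 2, \ldots$ yields the stated infinite family.

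There is no real obstacle here: the main work has already been done in Proposition~\ref{covertplus} (construction of the covering homomorphism $\varphi(x_1,\ldots,x_{n+t}) = (x_1,\ldots,x_n)$ and the identification of $\varphi(\mathcal{I}(n+t,q))$ with $\mathcal{I}(n,q) \cup \{t(q-1)\times 0^n\}$) and in the two easy lemmas on how quotient matrices transform under adding loops and under coverings. The theorem is simply the composition of these three facts.
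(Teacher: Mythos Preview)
Your argument is correct and matches the paper's approach exactly: the paper states the theorem as an immediate consequence of Proposition~\ref{covertplus} together with Lemmas~\ref{colorincover} and~\ref{colorinplus}, which is precisely the chain you describe. The only cosmetic point is that the composition should read $f\circ\varphi$ rather than $\varphi\circ f$ (the paper itself uses the same inverted notation in Lemma~\ref{colorincover}, so you are consistent with it), but the logic is unaffected.
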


\begin{theorem}[{\cite[Prop.\,33(iii)]{BRZ:CR}}]
\label{timestconstr}
For every perfect coloring of 
$H(n,q)$ with quotient matrix $S$
and every positive integer $ v $,
there exists a perfect coloring
of $H( v n, q)$ 
with quotient matrix $ v S$.
In particular, 
if there is a perfect $(b,c)$-coloring
of $H(n,q)$, then there is a perfect $( v b, v c)$-coloring
of $H( v n, q)$, $ v =1,2,\ldots$.
\end{theorem}

\begin{theorem}
\label{altimesconstr}
For every perfect coloring of 
$H(n,q)$ with quotient matrix $S$
and every positive integer $p$, 
there exists a perfect coloring 
of $H(n, pq)$ 
with quotient matrix 
$pS + n(p-1)I$.
In particular, 
if there is a perfect $(b,c)$-coloring
of $H(n,q)$, then there is a perfect $(pb,pc)$-coloring
of $H(n, pq)$, $p=1,2,\ldots$.
\end{theorem}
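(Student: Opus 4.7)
The plan is to chain together Proposition~\ref{coverpq} with Lemmas~\ref{colorincover}, \ref{colorinplus}, and \ref{colorintimes}, which together form a complete toolkit for transferring a perfect coloring along the three operations (edge multiplication, loop addition, and covering) that are combined in the statement of Proposition~\ref{coverpq}.

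Concretely, I would start with an arbitrary perfect coloring $f$ of $H(n,q)$ with quotient matrix $S$ and proceed in three steps. First, apply Lemma~\ref{colorintimes} to conclude that $f$ is also a perfect coloring of the multigraph $pH(n,q)$ with quotient matrix $pS$. Second, apply Lemma~\ref{colorinplus} with $t = n(p-1)$ to see that the same $f$ is a perfect coloring of $pH(n,q) + n(p-1)I$ with quotient matrix $pS + n(p-1)I$. Third, invoke Proposition~\ref{coverpq}, which gives a covering $\varphi$ of $pH(n,q) + n(p-1)I$ by $H(n,pq)$, and then Lemma~\ref{colorincover} yields that the composition $\varphi \circ f$ is a perfect coloring of $H(n,pq)$ with the required quotient matrix $pS + n(p-1)I$.

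For the ``in particular'' part, observe that adding $n(p-1)I$ does not affect the off-diagonal entries, so if $S$ has off-diagonal entries $b$ and $c$, then $pS + n(p-1)I$ has off-diagonal entries $pb$ and $pc$, giving a $(pb,pc)$-coloring. As a sanity check on the diagonal entries, the row sums of $pS + n(p-1)I$ equal $p\cdot n(q-1) + n(p-1) = n(pq-1)$, matching the degree of $H(n,pq)$ as required.

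There is no real obstacle here, since all the machinery has already been built: the proof is essentially a three-line assembly. The only minor point worth spelling out explicitly is the arithmetic verification that the row sums come out to $n(pq-1)$, which confirms that the resulting matrix is indeed a legitimate quotient matrix for a coloring of $H(n,pq)$.
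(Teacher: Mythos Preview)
Your proposal is correct and follows exactly the approach the paper itself takes: the theorem is stated immediately after the sentence ``Using Propositions~\ref{covertplus}--\ref{coverpq} and Lemmas~\ref{colorincover}--\ref{colorintimes}, we obtain the following constructions,'' and your three-step chain (Lemma~\ref{colorintimes}, then Lemma~\ref{colorinplus}, then Proposition~\ref{coverpq} combined with Lemma~\ref{colorincover}) is precisely the intended assembly. Your added sanity check on the row sums is a nice touch that the paper omits.
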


\subsection{Additive codes}\label{s:additive}
Under a \emph{code} in the Hamming graph $H(n,q)$, we mean an arbitrary nonempty subset of the vertex set of $H(n,q)$.
To each code in $H(n,q)$ we assign a $2$-coloring in which the set of vertices of the first color coincides with the code. We often identify codes with the corresponding $2$-colorings.

In this section we assume that $q=p^s$ for some prime $p$ and integer $s\ge 1$.
As the vertex set of the Hamming graph $H(n,q)$, we consider 
$$(\mathbb{Z}_p^s)^n =\{x=(x_1,\ldots,x_n)\mid x_i \in \mathbb{Z}_p^s \}.$$
Two vertices are adjacent in $H(n,q)$ if they differ in exactly one 
$\mathbb{Z}_p^s$-component.
A subset of $(\mathbb{Z}_p^s)^n$ is called an \emph{additive code}
if it is closed under the coordinate-wise addition.
Since $p$ is prime, an additive code is also a \emph{linear code} over $\mathbb{Z}_p$,
i.e., it is also closed with respect to the coordinate-wise multiplication by a constant.
As any linear code, such a code $C$ is the kernel of a vector-space homomorphism
represented by a $(sn-\dim C)\times sn$ matrix over $\mathbb{Z}_p$ called 
a \emph{check matrix} of the code.

\begin{theorem}\label{th:additive}
Let $C$ be an additive code in $(\mathbb{Z}_p^s)^n$ with an $m\times sn$ check matrix $H$ consisting 
of columns $h_{1,1}$, \ldots, $h_{1,s}$, $h_{2,1}$, \ldots, $h_{n,s}$. 
For $i\in\{1,\ldots,n\}$, 
denote by $V_i$ the vector subspace of $\mathbb{Z}_p^m$ spanned by 
$h_{i,1}$, \ldots, $h_{i,s}$. 
The following assertions are equivalent.
\begin{itemize}
 \item [\rm(i)] All the spaces $V_i$, $i=1,\ldots,n$,
 are of dimension $s$,
 and every non-zero element of
 $\mathbb{Z}_p^m$ belongs to exactly $c$ of them,
 $c=(p^s-1)n/(p^m-1)$.
 \item [\rm(ii)] The characteristic function 
 of $C$ in $(\mathbb{Z}_p^s)^n$ 
 is a perfect $2$-coloring of $H(n,q)$
 with quotient matrix
 $[[0,b],[c,b-c]]$, $b=(p^s-1)n$, $c=b/(p^m-1)$.
 \item [\rm(iii)] $C$ is an 
       OA$\left(p^{sn-m},n,p^s, \frac{p^{m-s}(p^s-1)n}{p^m-1}-1\right)$.
\end{itemize}
\end{theorem}
\begin{proof}
 (ii) implies (iii) by Proposition~\ref{corrimmune}. 
 As noted in~\cite{Pot:2012:color}, any OA$(N,n,q,t)$
 attaining the Bierbrauer--Friedman bound 
 $ N \ge q^n\left( 1-\frac{(q-1)n}{q(t+1)}  \right)$
 is a simple independent set whose characteristic function is a $(b,c)$-coloring,
 $b=(q-1)n$, $c=q(t+1)-(q-1)n$. So, (ii) follows from (iii).
 
 Let us show the equivalence of (i) and (ii). 
 Clearly, $C$ is an independent set 
 if and only if all $V_i$ are of maximal 
 dimension, $s$. 
 Assume this is the case.
 Consider a non-code vector $x$ from  $(\mathbb{Z}_p^s)^n$. 
 Denote by $h$ the \emph{syndrome} $Hx^{\mathrm T} $.
 If $h$ belongs to $V_i$ for some $i$, 
 then there is a unique vector $e$ with nonzeros only in the $i$th group of coordinates
 such that $He^{\mathrm T}=h$, so $x-e$ is a code neighbor of $x$. 
 We see that the number of $V_i$ the vector $h$ belongs to is exactly the number of code neighbors of $x$.
 Every non-code vector $x$ has exactly $c$ code neighbors if and only if every nonzero $h$ from 
 $\mathbb{Z}_p^m$ belongs to exactly $c$ spaces $V_i$, $i\in\{1,\ldots,n\}$.
 By numerical reasons, $c$ can only be $(p^s-1)n/(p^m-1)$.
\end{proof}

A collection of subspaces satisfying (i) is called a \emph{$c$-fold spread},
see~\cite[p.83]{Hirschfeld79}, where a construction of multifold spreads can be found.
The multifold spreads are a special case of subspace designs, namely,
the subspace designs of strength $1$, or $q$-ary $1$-$(n,k,\lambda)$ designs 
(in our notation, $k=m$ and $\lambda=c$).

\begin{Example}
Taking $p=s=2$, $m=3$, $n=7$, 
and all seven two-dimensional 
subspaces of $\mathbb{Z}_2^3$ as $V_i$, 
we obtain a $(21,3)$-coloring
of $H(7,4)$ and OA$(2048,7,4,5)$.
The last parameters 
(as well as the derived parameters OA$(512,6,4,4)$) 
occur 
as putative in~\cite[Table~12.3]{HSS:OA} 
(where $k:=n=7$, $t:=5$, index is $2048/4^5=2$).
However, the parameters 
(the length $7$, the dimension $3$ over $\mathbb{Z}_2$, 
and the minimum distance $d=6$) 
of the corresponding quaternary additive code
generated by $H$ (i.e., dual to $C$) 
were already discovered 
in~\cite{BlokBrow:2004},
and by the Delsarte theory~\cite{Delsarte:1973}, 
the strength of the dual orthogonal array is $t:=d-1$.
So, we cannot say that the OA$(2048,7,4,5)$
is new, but now we can treat it as a special 
case of a general construction,
in contrast to the computer-aided approach in~\cite{BlokBrow:2004}. 
The union of $3$ cosets of $C$ gives a perfect $(15,9)$-coloring
of $H(7,4)$, which reduces the upper bound in another line of the small-value table
(Appendix).
\end{Example}
 
It is possible to construct perfect colorings from additive codes 
that are not independent sets. This possibility needs further investigation,
and here we only show one example, whose parameters occur in Appendix.
 
\begin{Example}
Consider the check matrix
$$
H=\left(
\begin{array}{c@{\,}c@{~}c@{\,}c@{~}c@{\,}c@{~}c@{\,}c@{~}c@{\,}c@{~}c@{\,}c@{~}c@{\,}c@{~}c@{\,}c@{~}c@{\,}c@{~}c@{\,}c@{~}c@{\,}c@{~}c@{\,}c@{~}c@{\,}c}
0 & 0  &  0 & 0  &  0 & 1  &  0 & 1  &  1 & 0  &  1 & 0  &  0 & 1  &  0 & 1  &  1 & 0  &  0 & 1  &  0 & 1  &    1 & 0   \cr
0 & 1  &  0 & 1  &  1 & 0  &  1 & 0  &  0 & 0  &  0 & 0  &  0 & 1  &  1 & 0  &  0 & 1  &  1 & 1  &  1 & 1  &    1 & 0   \cr
1 & 0  &  1 & 0  &  0 & 0  &  0 & 0  &  0 & 1  &  0 & 1  &  1 & 0  &  0 & 1  &  0 & 1  &  1 & 0  &  1 & 0  &    1 & 0   
\end{array}
\right).
$$
over $\mathbb{Z}_2$. 
The kernel of this matrix is an additive code, 
whose characteristic function 
is a perfect $(35,5)$-coloring of $H(12,4)$ 
(a perfect $(35,5)$-coloring of $H(13,4)$ was constructed in
\cite{KroPot:multifold} as a special case of perfect multifold ball packing).
The proof is straightforward,
using the same syndrome approach as in Theorem~\ref{th:additive}.
The union of $3$ cosets gives a perfect $(20,15)$-coloring
and also contributes to the small-value table.
\end{Example}

\begin{remark}
Additive codes can be interpreted in the classical manner of coding theory, as codes over fields. 
To do so, we can treat $\mathbb{Z}_p^s$ as the finite field GF$(q)$, with properly defined multiplication.
As a code over GF$(q)$, an additive code is closed with respect to addition but not necessarily 
with respect to multiplication by constant, if $s>1$. 
In the case, $s=1$, the additive construction gives only colorings
obtained from $1$-perfect codes, see the next subsection.
\end{remark}


\subsection{MDS codes, $1$-perfect codes}
\label{perfcodes} \label{directconstrsec}

An \emph{MDS code} with distance $d$
 is a set of $q^{n-d+1}$ vertices in $H(n,q)$ 
such that the Hamming distance 
between any two different code vertices 
is not less than $d$.
Equivalently, a distance-$d$ MDS code is 
an orthogonal array OA$(q^{n-d+1},n,q,n-d+1)$.

A \emph{$\tau$-fold MDS code}
in $H(n,q)$ is a set of vertices that 
has exactly $\tau$ elements in every 
$1$-face (line), i.e., a simple OA$(\tau q^{n-1},n,q,n-1)$.
Some $\tau$-fold MDS codes
can be obtained as the union of $\tau$ 
disjoint copies of distance-$2$ MDS codes. 
The definition implies that any 
$\tau$-fold MDS code, $\tau\in\{1,\ldots,q-1\}$,  
corresponds to an
$(n(q-\tau), n \tau)$-coloring of $H(n,q)$;
by Theorem~\ref{timestconstr}, 
colorings with such parameters 
can be constructed from 
a $(q-\tau,\tau)$-coloring 
of the complete graph $H(1,q)$.

Perfect colorings corresponding 
to  
 multifold MDS codes (including distance-$2$ MDS codes)
have the second eigenvalue 
$\theta = -n$,
which is the smallest eigenvalue
of $H(n,q)$.

A \emph{$1$-perfect code} 
in  $H(n,q)$ is a set of vertices 
such that each radius-$1$ ball 
$B(x) = \{ y  \mid  d(x,y) \leq 1\}$ 
contains exactly one code vertex. 
The minimal Hamming distance between 
different
vertices of a nontrivial 
(with more than $1$ vertex) 
$1$-perfect code 
is equal to $3$.

It is well known that 
$1$-perfect codes
exist in $H(n,q)$ 
if $q = p^s$ 
for some prime $p$ and 
$n = \frac{q^r -1}{q-1}$
for some $r \in \mathbb{N}$ 
(e.g. $q$-ary Hamming codes, 
which can be treated as a special case 
of the additive codes from Section~\ref{s:additive}).
If $q$ is not a prime power, 
then the existence of $1$-perfect codes
in $H(n,q)$ 
is a long-standing open problem.
The definition implies 
that every $1$-perfect code 
in $H(n,q)$ is a $(n(q-1),1)$-coloring.

A \emph{$\tau$-fold $1$-perfect code} 
in $H(n,q)$ 
is a set of vertices 
such that each radius-$1$ ball 
contains exactly $\tau$ code vertices.
Some $\tau$-fold $1$-perfect codes
 can be obtained as 
the union of $\tau$ disjoint copies 
of $1$-perfect codes. 
$\tau$-Fold $1$-perfect codes 
correspond to $(n(q-1)-\tau+1, \tau)$-colorings 
of $H(n,q)$; they exist if 
(but not necessarily ``only if'') 
$1$-perfect codes exist.
The second eigenvalue 
of $1$-perfect codes 
and multifold $1$-perfect codes is $-1$.

In constructions, we will use 
the following correspondence 
between $1$-perfect codes 
in the Hamming graph $H(q+1, q)$ 
and certain  MDS codes in $H(q,q)$. 

\begin{proposition} \label{Hqqdecomp}
If there exists a $1$-perfect code in  
$H(q+1, q)$, 
then there exists a partition
of the vertex set of $H(q,q)$ 
into  distance-$2$ MDS codes 
$M^0$, \ldots, $M^{q-1}$ 
such that each $M^i$ 
can be decomposed 
into the union of 
$q$ disjoint distance-$3$ 
MDS codes.  
\end{proposition}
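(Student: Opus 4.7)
The plan is to extract the desired partition directly from a fixed $1$-perfect code $C\subseteq\mathbb{Z}_q^{q+1}$. For each $a\in\mathbb{Z}_q$ set $C_a := \{x\in\mathbb{Z}_q^q : (x,a)\in C\}$, and write $S := \bigsqcup_{a\in\mathbb{Z}_q}C_a=\pi(C)$, where $\pi:\mathbb{Z}_q^{q+1}\to\mathbb{Z}_q^q$ is the projection that forgets the last coordinate. The first goal is to show that $S$ is a distance-$2$ MDS code in $H(q,q)$ and that each $C_a$ is a distance-$3$ MDS code in $H(q,q)$; the second is to generate the full partition of the vertex set by translating $S$ along the first coordinate.

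For the first goal, recall that $|C|=q^{q+1}/(1+(q+1)(q-1))=q^{q-1}$ from the $1$-perfect code property. The minimum distance of $C$ being at least $3$ forces $\pi|_C$ to be injective (two distinct preimages would lie at distance $1$), so $|S|=q^{q-1}$. Moreover, if $x,y\in S$ were at Hamming distance $1$ in $H(q,q)$, their lifts $(x,a),(y,b)\in C$ would satisfy $d((x,a),(y,b))\in\{1,2\}$, contradicting the minimum distance $3$ of $C$; hence $S$ has minimum distance at least $2$ and the Singleton bound identifies it as a distance-$2$ MDS code. Each $C_a$ inherits minimum distance $\geq 3$ from $C$, so Singleton gives $|C_a|\leq q^{q-2}$, and combined with $\sum_a|C_a|=|S|=q\cdot q^{q-2}$ this squeeze forces $|C_a|=q^{q-2}$ for every $a$, so each $C_a$ is a distance-$3$ MDS code.

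For the second goal, set $M^t := S+te_1$ for $t\in\mathbb{Z}_q$, where $e_1=(1,0,\dots,0)$. Translation is an automorphism of $H(q,q)$, so each $M^t$ is again a distance-$2$ MDS code and decomposes as $M^t=\bigsqcup_a (C_a+te_1)$ into $q$ disjoint distance-$3$ MDS codes. For the partition property, $x\in M^t$ is equivalent to $x-te_1\in S$; since $\{x-te_1:t\in\mathbb{Z}_q\}$ is the line through $x$ in the first coordinate direction and $S$ meets every such line in exactly one vertex (it is a distance-$2$ MDS code), there is a unique value of $t$ for each $x$, establishing the partition.

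I expect the main obstacle to be the opening identification: once one spots that the fibers $C_a$ of the projection $\pi$ are already the desired distance-$3$ MDS sub-codes and that their union $S$ automatically forms a distance-$2$ MDS code, the rest is essentially bookkeeping with the Singleton bound and a translation argument. The short step yielding $|C_a|=q^{q-2}$, pinched between Singleton's upper bound and the total count $|S|=q^{q-1}$, is the only nontrivial numerical input and replaces the heavier ball-counting that one might attempt first.
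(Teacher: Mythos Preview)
Your proof is correct and follows essentially the same route as the paper: project a $1$-perfect code $C\subset H(q+1,q)$ onto the first $q$ coordinates to obtain a distance-$2$ MDS code whose fibres over the last coordinate are distance-$3$ MDS codes, then translate along $e_1$ to get the partition. The only difference is presentational: the paper invokes directly that a $1$-perfect code in $H(q+1,q)$ is a distance-$3$ MDS code (so shortening immediately yields the MDS properties of the projection and fibres), whereas you spell out the same facts via the Singleton bound and the counting squeeze $\sum_a|C_a|=q^{q-1}$.
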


\begin{proof}
Let $C$ be a $1$-perfect code
(equivalently, a distance-$3$ MDS code)
in $H(q+1,q)$,
and let $C_j$ be the subset
of $C$ consisting of all
codewords with last symbol $j$, 
$j=0,\ldots,q-1$.
Denote by $M$, 
$L_0$, \ldots, $L_{q-1}$
the last-coordinate 
projections of $C$, 
$C_0$, \ldots, $C_{q-1}$, respectively.
By definition, 
$M$ is a distance-$2$
MDS code, and $L_j$, $j=0,\ldots,q-1$,
are distance-$3$ MDS codes;
moreover, $M=\cup_j L_j$.
It follows that the codes
$M^i=M + (i,0,...,0)$, $i=0,\ldots,q-1$,
form a required partition.
\end{proof}


\subsection{The invasion construction} \label{s:invasion}
Here, we describe a construction 
of  perfect $2$-colorings 
in Hamming graphs 
based on a set of $2$-colorings 
and a perfect coloring 
into an arbitrary number of colors. 
We  will use it as a step of 
splitting constructions in Sections~\ref{s:split1} and~\ref{s:split2}.

Let $f$ be a $k$-coloring of  $H(n,q)$ 
and let $g_1$, \ldots, $g_k$ 
be $2$-colorings of $H(m,q)$ with same sets of colors.
Define the \emph{invasion}  
$h = f \times (g_1, \ldots, g_k)$ 
of the coloring $f$ by colorings 
$g_1$, \ldots, $g_k$ to be a 
$2$-coloring of $H(n+m,q)$ such that
$$h(x,y) = g_{f(x)} (y), ~~~ x \in \mathbb{Z}_q^n, ~~y \in \mathbb{Z}_q^m.$$

Roughly speaking, the invasion coloring $h$ 
of $H(n+m,q)$ is obtained  
by replacing each vertex of color $i$ 
in the  coloring  $f$ of $H(n,q)$ 
by the graph $H(m,q)$ colored 
with the coloring $g_i$.

Assume that $M^1$, \ldots, $M^q$ is 
a partition of the vertex set of $H(m,q)$ 
into $q$ pairwise  disjoint distance-$2$ 
MDS codes. 
Given $\tau$, $0 \leq \tau \leq q$, 
define $G^i_\tau$ to be the $2$-coloring
corresponding to the $\tau$-fold MDS code 
$\bigcup\limits_{j=i}^{i+\tau-1} M^j$ 
(index $j$ goes cyclically modulo $q$) 
in $H(m,q)$ if $m \geq 1$. 
In the case $m = 0$ (when $H(0,q)$ is the $1$-vertex graph), 
let  $G^i_\tau$ be the coloring 
of a vertex into the first color 
if $1 \leq i \leq \tau$,
and the coloring of a vertex 
into the second color  if
$\tau+1 \leq i \leq q$.

\begin{proposition} \label{invasion}
{1.} Let $f$ be a perfect $(q+1)$-coloring 
of $H(n,q)$ with the quotient matrix
$$ \left(\begin{array}{cccc}
\alpha' &  \cdots & \alpha & \beta \\
\vdots  & \ddots &  \vdots & \vdots \\
\alpha &  \cdots & \alpha' & \beta \\
\gamma & \cdots & \gamma & \delta 
\end{array} \right).$$
If $ 0 \leq  \gamma - \alpha = m$ then 
for each $\tau = 0, \ldots, q$, $l = 1, 2$ such that $\tau (l-1) + (q-\tau) (2-l) \neq 0$, 
the invasions 
$h_l = f \times (G^1_\tau, \ldots, G^q_\tau, \mathbf{l})$,
where $\mathbf{l}$ 
is the solid  coloring of $H(m,q)$ 
into the color $l$, 
are $(b_l,c_l)$-colorings 
of $H(n+m,q)$ 
with  $b_l = \gamma (q-\tau) + \beta (l-1)$ 
and $c_l = \gamma \tau + \beta (2-l)$.

{2.} Let $f$ be a perfect $2q$-coloring 
of $H(n,q)$ with the quotient matrix
$$ \left(\begin{array}{cccccc}
\alpha &  \cdots & \alpha & \beta & \cdots & \beta \\
\vdots  & \ddots &  \vdots & \vdots & \ddots & \vdots \\
\alpha &  \cdots & \alpha & \beta & \cdots & \beta\\
\gamma & \cdots & \gamma & \delta & \cdots & \delta \\
\vdots  & \ddots &  \vdots & \vdots & \ddots & \vdots \\
\gamma & \cdots & \gamma & \delta & \cdots & \delta \\
\end{array} \right).$$
If $0 \leq \gamma - \alpha = \beta - \delta = m$,
then for each $\tau_1, \tau_2 = 0, \ldots, q$, $\tau_1 + \tau_2 \neq 0, 2q$ 
the invasion 
$h = f \times (G^1_{\tau_1}, \ldots, G^q_{\tau_1}, G^1_{\tau_2}, \ldots, G^q_{\tau_2})$ 
is a $(b,c)$-coloring 
of $H(n+m,q)$ 
with 
$b = q(\gamma + \beta) -\gamma \tau_1 - \beta \tau_2$ 
and $c = \gamma \tau_1 + \beta \tau_2$.
\end{proposition}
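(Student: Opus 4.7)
The plan is to verify both parts by direct computation of neighbor counts in $H(n+m,q)$. Writing each vertex as a pair $(x,y) \in \mathbb{Z}_q^n \times \mathbb{Z}_q^m$, its neighbors split into \emph{horizontal} neighbors $(x',y)$ with $d(x,x')=1$ and \emph{vertical} neighbors $(x,y')$ with $d(y,y')=1$. The horizontal contribution to the count of neighbors of a given color is determined by the quotient matrix of $f$ applied at $x$, combined with the values $G^i_t(y)$, while the vertical contribution is determined by the quotient matrix of $G^{f(x)}_t$ applied at $y$. The key combinatorial observation, to be stated at the outset, is that for every $y$ exactly $t$ of the values $G^1_t(y), \ldots, G^q_t(y)$ equal $1$ and exactly $q - t$ equal $2$: this is immediate from the fact that $M^1, \ldots, M^q$ is a partition, so $\{i : G^i_t(y) = 1\}$ is the cyclic interval of length $t$ ending at the unique index $j(y)$ with $y \in M^{j(y)}$.

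For Part~1, I split the color-$1$ vertices of $h_l$ into case~(A), $f(x) \leq q$ with $G^{f(x)}_t(y) = 1$, and case~(B), $f(x) = q+1$ with $l = 1$. In case~(A) the horizontal color-$2$ count equals $\alpha(q-t) + \beta(l-1)$ (the diagonal entry $\alpha'$ contributes zero since $G^{f(x)}_t(y) = 1$), the vertical count equals $m(q-t)$ from the upper-right entry of the quotient matrix of $G^{f(x)}_t$, and together with $\gamma = \alpha + m$ they sum to $\gamma(q-t) + \beta(l-1) = b_l$. In case~(B), which occurs only for $l=1$, the horizontal count is $\gamma(q-t)$ and the vertical count is $0$, again giving $b_1 = \gamma(q-t)$. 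A symmetric computation with the colors interchanged establishes $c_l = \gamma t + \beta(2-l)$. The side condition $t(l-1) + (q-t)(2-l) \neq 0$ excludes only the two pairs $(t,l) = (q,1)$ and $(t,l) = (0,2)$, for which $h_l$ is constant.

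Part~2 follows the same template with two parameters $t_1, t_2$: a color-$1$ vertex of $h$ is either case~(A), $f(x) \leq q$ with $G^{f(x)}_{t_1}(y) = 1$, or case~(B), $f(x) = q+i$ with $G^i_{t_2}(y) = 1$. The horizontal color-$2$ count in case~(A) is $\alpha(q-t_1) + \beta(q-t_2)$ with vertical addend $m(q-t_1)$, and in case~(B) it is $\gamma(q-t_1) + \delta(q-t_2)$ with vertical addend $m(q-t_2)$. The hypotheses $\gamma = \alpha + m$ and $\beta = \delta + m$ make both cases collapse to $\gamma(q-t_1) + \beta(q-t_2) = b$, and $c$ follows analogously. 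The main obstacle throughout is simply the bookkeeping: the two case-counts must agree, and this agreement is exactly what the hypotheses $\gamma - \alpha = m$ (and $\beta - \delta = m$ in Part~2) provide, with $m$ serving as the vertical shift that compensates for the horizontal gap between the two row-blocks of the quotient matrix of $f$.
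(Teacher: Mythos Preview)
Your argument is correct and follows essentially the same approach as the paper's proof: split neighbors of $(x,y)$ into horizontal and vertical, do a case analysis on whether $f(x)$ lies in the first block or the last row (resp.\ second block in Part~2), and observe that the hypothesis $m=\gamma-\alpha$ (and $m=\beta-\delta$ in Part~2) makes the two cases agree. Your explicit statement of the key combinatorial fact---that for every $y$ exactly $t$ of $G^1_t(y),\ldots,G^q_t(y)$ equal~$1$---is a useful clarification that the paper leaves implicit.
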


\begin{proof}
1. By construction,  
the invasion 
$h_l = f \times (g_1, \ldots, g_k)$  
is a coloring of $H(n+m,q)$ 
into colors $1$ and $2$. 
Each vertex $z \in H(n+m,q)$ 
is considered as an ordered pair 
of vertices $(x,y)$,  
where $x$ is a vertex of  $ H(n,q)$ 
and $y$ is a vertex of $ H(m,q)$. 
Therefore, the number of vertices 
of color $1$ (color $2$) 
in the coloring $h_l$ 
adjacent to the vertex $z = (x,y)$ 
is equal to  the number of vertices 
of color $1$ (color $2$) adjacent to $y$
in the coloring $G^{f(x)}_\tau$ 
or in the coloring $\mathbf{l}$ 
plus the number of vertices $h(x',y)$
of color $1$ (color $2$), 
where $x'$ runs over all 
the neighbors of $x$.

Recall 
that for each 
$i = 1, \ldots, q$,
the coloring $G^i_\tau$
is a $(m(q-\tau),m \tau)$-coloring of  $H(m,q)$.

Suppose that $h_l(z)=1$, $z=(x,y)$.
If $1 \leq f(x) \leq q$,
then the number of vertices 
of $h_l^{-1}(2)$ adjacent to $z$  
is
$m(q-\tau) + \alpha (q-\tau) + \beta (l-1)$.  
In the case $ f(x) = q+1$,
we have $l=1$ and
the number of the vertices 
of $h_1^{-1}(2)$ adjacent to $z$ 
is equal to $\gamma (q-\tau)$. 
The condition $m = \gamma - \alpha$ 
implies that the equality 
$m(q-\tau) + \alpha(q-\tau) = \gamma (q-\tau)$ 
holds for the coloring $h_1$. 

Suppose now that 
$h_l(z)=2$, $z=(x,y)$. 
If $1 \leq f(x) \leq q$,
then the number of the vertices 
of $h_l^{-1}(1)$ adjacent to $z$ 
is equal to $ m \tau + \alpha \tau + \beta (2-l)$.  
In the case $ f(x) = q+1$,
we have $l=2$ and
the number of the vertices 
of $h_2^{-1}(1)$ adjacent to $z$
in the coloring $h_2$ 
is equal to $\gamma \tau$. 
Again, the condition
$m = \gamma - \alpha$ 
implies that 
the equality 
$ m \tau + \alpha \tau = \gamma \tau $ 
holds for the coloring $h_2$. 

Thus, the invasions $h_l$ 
are perfect $(b_l, c_l)$-colorings
of $H(n+m,q)$ with 
$b_l = \gamma (q-\tau) + \beta (l-1)$ and 
$c_l = \gamma \tau + \beta (2-l)$.

2. As before, let $z = (x,y)$ 
be a vertex of  $H(n+m,q)$. 

Suppose $h(z)=1$. 
By the definition of $h$, 
if $1 \leq f(x) \leq q$ 
then the number of neighbors
of $z$ in $h^{-1}(2)$ 
is equal to 
$m(q-\tau_1) + \alpha (q-\tau_1) + \beta (q-\tau_2)$.
In the case $q+1 \leq  f(x) \leq 2q$
the number of vertices of color 2
adjacent to $z$ is equal to
$m (q-\tau_2) + \gamma (q-\tau_1) + \delta(q-\tau_2)$. 
The condition 
$m = \gamma - \alpha = \beta - \delta$ 
implies that the equality 
$m(q-\tau_1) + \alpha (q-\tau_1) + \beta (q-\tau_2) = m (q-\tau_2) + \gamma (q-\tau_1) + \delta(q-\tau_2)$ 
holds for  all $\tau_1$ and $\tau_2$. 

Now suppose $h(z)=2$.
If $1 \leq f(x) \leq q$ 
then the number 
of neighbors of $z$ 
in $h_l^{-1}(1)$ 
is equal to 
$m\tau_1 + \alpha \tau_1 + \beta \tau_2$.  
In the case $q+1 \leq f(x) \leq 2q+1$,
the number of 
vertices of color 1 adjacent to $z$ 
is equal to 
$m\tau_2 + \gamma \tau_1 + \delta \tau_2$. 
Again, the condition 
$m = \gamma - \alpha = \beta - \delta$ 
implies that the equality 
$m\tau_1 + \alpha \tau_1 + \beta \tau_2 
 = m \tau_2 + \gamma \tau_1 + \delta \tau_2$ 
holds for  all $\tau_1$ and $\tau_2$. 

Thus, the invasion $h$ is a perfect 
$(b, c)$-coloring of $H(n+m,q)$ 
with 
$b = \gamma (q-\tau_1) + \beta (q-\tau_2)$ 
and $c = \gamma \tau_1 + \beta \tau_2$.
\end{proof}

\subsection{Splitting construction I}
\label{flaasssec}\label{s:split1}

In this and next sections,
we provide  
constructions
that enable us to obtain 
perfect $2$-colorings 
with new proportions of colors.
We call them 
splitting constructions.
The main idea 
is to start with
multiplying the parameters of
a $2$-coloring by a factor $q$,
as in Theorem~\ref{timestconstr};
after this, with the help of
$1$-perfect code in $H(q+1,q)$,
one of the colors 
(say, the second one) 
is split into
$q$ ``equivalent'' colors; 
and finally, 
after some intermediate step,
$\tau$ of these colors are
unified with the first one,
while the remaining $q-\tau$ 
colors form the new second color. 
So, the ratio of the colors 
changes as follows:
$$
c:b=qc:qb
\to
qc:b:\cdots:b \to qc+ \tau b:(q- \tau )b.
$$
The constructions generalize
the main construction 
in \cite{FDF:PerfCol}
from the case $q=2$ 
to the case of an arbitrary $q$ 
such that there exists 
a $1$-perfect code 
in $H(q+1,q)$.

We divide their proofs  
into several lemmas.
Firstly,  for certain $q$ 
and for every perfect $(b,c)$-coloring  of $H(n,q)$, 
we construct an appropriate perfect $2q$-coloring.

\begin{lemma} \label{2colorsinq}
Let $f$ be a perfect coloring 
in $H(n,q)$ 
with the quotient matrix 
$ \left( \begin{array}{cc} a & b 
  \\ c & d \end{array} \right)$.  
If there exists
a $1$-perfect code 
in $H(q+1,q)$,
then there exists
a perfect $2q$-coloring $g$
of $H(qn,q)$
 with the quotient matrix
$$
T = \left( \begin{array}{cccccc}
a & \cdots & a & b & \cdots & b \\
\vdots& \ddots & \vdots & \vdots & \ddots & \vdots \\
a & \cdots & a & b & \cdots & b \\
c & \cdots & c & d & \cdots & d \\
\vdots& \ddots & \vdots & \vdots & \ddots & \vdots \\
c & \cdots & c & d & \cdots & d \\
\end{array} \right). 
$$
\end{lemma}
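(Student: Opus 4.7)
My plan is to build $g$ so that aggregating colors $\{1,\ldots,q\}$ into one super-color and $\{q{+}1,\ldots,2q\}$ into the other yields the coloring $f\circ\bar\chi$ of $H(qn,q)$, where $\bar\chi$ applies block-wise the $M^i$-labeling from Proposition~\ref{Hqqdecomp}. Since $(M^i)$ is an equitable partition of $H(q,q)$ (each line meets every $M^i$ in exactly one vertex), $\bar\chi$ is a covering of $qH(n,q)$ by $H(qn,q)$, and hence $f\circ\bar\chi$ is automatically a perfect $(qb,qc)$-coloring -- this is exactly Theorem~\ref{timestconstr} realized via $\bar\chi$. The real task is therefore to refine each of the two super-classes into $q$ equivalent subclasses using the finer partition $M^i=\bigsqcup_s L^i_s$.

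Fix the $1$-perfect code $C\subset H(q+1,q)$ and let $M=\pi_1(C)$, $L_s=\pi_1(C\cap\{c_{q+1}=s\})$ be as in the proof of Proposition~\ref{Hqqdecomp}, so $M^i=M+ie_1$ and $L^i_s=L_s+ie_1$. Define $\chi,\xi\colon H(q,q)\to\mathbb Z_q$ by the condition $v\in L^{\chi(v)}_{\xi(v)}$. The combinatorial heart of the proof is the following claim:
\emph{for every $v\in H(q,q)$ and every $i'\ne\chi(v)$, the $q$ neighbors of $v$ lying in $M^{i'}$ (one per coordinate-line through $v$) have pairwise distinct $\xi$-values.}
To prove it, write those neighbors as $u_1,\ldots,u_q$. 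Since $u_l-i'e_1\in M$, there is a unique $s_l\in\mathbb Z_q$ with $c_l:=(u_l-i'e_1,s_l)\in C$, and $\xi(u_l)=s_l$. For $l\ne l'$, $u_l-u_{l'}$ has Hamming weight exactly $2$, so $c_l-c_{l'}$ has weight $2$ in the first $q$ coordinates; the minimum distance $3$ of $C$ forces $s_l\ne s_{l'}$, proving the claim.

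Now set
$$
g(x^{(1)},\ldots,x^{(n)})=q\bigl(f(\chi(x^{(1)}),\ldots,\chi(x^{(n)}))-1\bigr)+1+\Big(\textstyle\sum_{i=1}^n\xi(x^{(i)})\bmod q\Big)\in\{1,\ldots,2q\}.
$$
To verify that $g$ has quotient matrix $T$, fix a vertex $x$ and a block index $i$. The claim above, together with the equitability of $(M^{i'})$, says that as $(j,k)\in\{1,\ldots,q\}\times(\mathbb Z_q\!\setminus\!\{0\})$ ranges over the $q(q-1)$ neighbors of $x$ modified in block $i$, the pair $(\Delta\chi,\Delta\xi):=(\chi(x'^{(i)})-\chi(x^{(i)}),\xi(x'^{(i)})-\xi(x^{(i)}))$ hits each element of $(\mathbb Z_q\!\setminus\!\{0\})\times\mathbb Z_q$ exactly once. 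For any target color $(r',s')$ of $g$, the minor constraint $\Delta\xi=s'-s$ leaves, within block $i$, exactly one neighbor per nonzero $\Delta\chi$; summing over $i$ converts the count into the number of $H(n,q)$-neighbors of $(\chi(x^{(1)}),\ldots,\chi(x^{(n)}))$ coloured $r'$ by $f$, which by equitability of $f$ equals $a$, $b$, $c$, or $d$ depending only on $(r,r')=(f(\bar\chi(x)),r')$. This is precisely the block form of $T$.

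The only nontrivial step is the combinatorial claim in the second paragraph; once it is in hand, verifying the matrix is a short counting. The claim uses only the minimum distance $3$ of $C$, so the construction works whenever a $1$-perfect code in $H(q+1,q)$ exists, with no further structural assumption on the code.
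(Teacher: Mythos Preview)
Your argument is correct and follows essentially the same route as the paper: both constructions define $g$ from the pair $(\chi,\xi)$ (the paper calls these $h(y^i)$ and $j_i$), with the ``minor'' part fed into a quasigroup---you take the specific sum $\sum_i\xi(x^{(i)})\bmod q$, while the paper allows an arbitrary $n$-ary quasigroup $R$, but this is inessential for the lemma. The one small difference is in how you prove the key claim that the $q$ neighbors of $v$ in $M^{i'}$ have distinct $\xi$-values: you lift back to the $1$-perfect code $C$ and invoke its minimum distance $3$, whereas the paper argues directly that two such neighbors are at distance $2$ and hence cannot lie in the same distance-$3$ MDS code $L^{i'}_j$; both arguments are short and equivalent.
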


\begin{proof}
Let $M^1$, \ldots, $M^q$ 
be a partition 
of the vertex set of  $H(q,q)$ 
into $q$ distance-$2$ MDS codes
such that each $M^i$ 
is partitioned
into distance-$3$ MDS 
codes $L^i_1$, \ldots, $L^i_q$. 
The existence of such a decomposition, 
provided the existence a $1$-perfect code 
in $H(q+1,q)$, 
is guaranteed by 
Proposition~\ref{Hqqdecomp}.

Define a $q$-ary quasigroup $h$ 
of order $q$ 
on the vertex set of $H(q,q)$ as 
$h(x_1, \ldots, x_q) = i $ 
if and only if the vertex 
$(x_1, \ldots, x_q) $ 
belongs to $ M^i$. 

Recall that every vertex $y = (y_1, \ldots, y_{qn})$ of $H(qn,q)$ 
can be considered as a tuple 
$y = (y^1, \ldots, y^n)$ 
of $n$ vertices 
$y^i = (y_{(i-1)q+1}, \ldots, y_{iq})$ 
of $H(q,q)$. 
For shortness, 
let $X_y = (x_1, \ldots, x_n)$ 
be a vertex of $H(n,q)$ 
with $x_i  = h(y^i)$ 
and $J_y = (j_1, \ldots, j_n)$ 
be a vertex of $H(n,q)$ 
such that $j_i$ is defined
by the index of the distance-$3$  MDS code  
containing the vertex $y^i$:  
$y^i \in L^{h(y^i)}_{j_i}$.  
For an arbitrary $n$-ary quasigroup $R$ 
of order $q$ on the vertex set of $H(n,q)$, 
we define a coloring $g$ of $H(qn,q)$ as
$$g(y)  =  q(f(X_y) - 1) + R(J_y). $$
So $g$ is a $2q$-coloring of $H(qn,q)$ 
into colors 
$\{ 1, \ldots, q, q+1, \ldots, 2q\}$.

Let us prove that $g$
is a perfect coloring 
with the quotient matrix $T$. 
Let $\mathcal{Z}^i_\alpha$
be the set of vertices $z$ adjacent to $y$ 
such  that  $z$ differs from $y$ 
in some component of $y^i$ from $H(q,q)$ with $\alpha = h(z^i) \neq h(y^i)$. Given a vertex $y$ with $f(X_y) = 1$, there are exactly $a$ sets  $\mathcal{Z}^i_\alpha$ such that for all $z \in \mathcal{Z}^i_\alpha$ we have $f(X_z) = 1$ and  $b$ sets  $\mathcal{Z}^i_\alpha$ with $f(X_z) = 2$ for all $z \in \mathcal{Z}^i_\alpha$. Similarly, if $f(X_y) = 2$ then we have $c$ sets  $\mathcal{Z}^i_\alpha$ such that for all $z \in \mathcal{Z}^i_\alpha$ it holds $f(X_z) = 1$ and  $d$ sets  $\mathcal{Z}^i_\alpha$ with $f(X_z) = 2$ for all $z \in \mathcal{Z}^i_\alpha$.

 Note that 
 for each $i$ and $\alpha$ 
 the cardinality of the set 
 $\mathcal{Z}^i_\alpha$ 
 is equal to $q$ 
 and all these $q$ vertices $z$ 
 are colored by $g$ 
 into $q$ different colors. 
 Indeed, their components $z^i$ 
 belong to different 
 distance-$3$ MDS codes 
 $L^\alpha_{j_{i}}$ 
  (otherwise we have a contradiction
  with the minimal distance 
  in $L^\alpha_{j_{i}}$), 
  and all other components  
  $z^k$ of 
  $z \in \mathcal{Z}^i_\alpha$ 
  coincide and belong 
  to the same codes 
  $L^{h(z^k)}_{j_k}$. 
  Since the quasigroup  $R$ 
  takes all different values 
  on the set of $q$ vectors $J_z$
  different in one position, 
  vertices 
  $z \in \mathcal{Z}^i_\alpha$
  are colored by $g$ into all 
  $q$ possible colors.   
 
 Therefore, each vertex $y$ 
 with  $g(y) \in \{ 1, \ldots, q\}$ 
 is adjacent to exactly $a$ 
 vertices of each of the colors 
 $1, \ldots, q$ 
 and is adjacent to $b$ 
 vertices of each 
 of the colors $q+1, \ldots, 2q$ 
 in the coloring $g$. 
 The same is true 
 for vertices $y$ of colors 
 $g(y) \in \{ q+1, \ldots, 2q\}$.
\end{proof}

In case when one of the colors
of a perfect $2$-coloring $f$ 
can be divided into 
$k$-dimensional faces,
a similar method enables us
to construct  the following 
perfect $(q+1)$-colorings.

\begin{lemma} \label{2colorswithkinq}
Let $f$ be a perfect coloring 
in $H(n,q)$ 
with the quotient matrix 
$ \left( \begin{array}{cc} a & b 
 \\ c & d \end{array} \right)$
such that the set of vertices 
of the first color can be 
partitioned into  $k$-faces.  If there exists a $1$-perfect code in   $H(q+1,q)$,
then there exist perfect colorings $g'$ and $g''$ in $q+1$ colors in $H(qn,q)$ 
with the quotient matrices
$$T' = \left( \begin{array}{cccc}
a - k(q-1) &   \cdots & a+k & qb  \\
\vdots & \ddots & \vdots & \vdots  \\
a + k  &  \cdots & a-k(q-1) & qb  \\
c & \cdots & c & qd   \\
\end{array} \right) ; ~~
T'' = \left( \begin{array}{cccc}
a + k(q-1)^2 &   \cdots & a-k(q-1) & qb  \\
\vdots & \ddots & \vdots & \vdots  \\
a - k(q-1)  &  \cdots & a+k(q-1)^2 & qb  \\
c & \cdots & c & qd   \\
\end{array} \right).
 $$
\end{lemma}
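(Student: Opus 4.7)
The plan is to extend the construction from Lemma~\ref{2colorsinq} by folding its second block of $q$ colors into a single color $q+1$ and then using the $k$-face partition of $f^{-1}(1)$ to redistribute the first $q$ colors. We keep the decomposition of $H(q,q)$ into distance-$2$ MDS codes $M^1,\ldots,M^q$, each split into distance-$3$ MDS codes $L^i_1,\ldots,L^i_q$ (provided by Proposition~\ref{Hqqdecomp}), and define $X_y$ and $J_y$ exactly as in the proof of Lemma~\ref{2colorsinq}. Let $\mathcal{F}$ be the given partition of $f^{-1}(1)$ into $k$-faces, and for each $x\in f^{-1}(1)$ let $I(x)\subseteq\{1,\ldots,n\}$ be the set of free coordinates of the face of $\mathcal{F}$ containing $x$. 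Set $g'(y)=g''(y)=q+1$ whenever $f(X_y)=2$, and for $f(X_y)=1$ put
$$
g'(y)=1+\Bigl(\sum_{j\in I(X_y)}(X_y)_j+\sum_{j\notin I(X_y)}(J_y)_j\Bigr)\bmod q,\quad g''(y)=1+\Bigl(\sum_{j\notin I(X_y)}(J_y)_j\Bigr)\bmod q.
$$
Informally, the $X$-sum in $g'$ implements a proper $q$-coloring of each face while the $J$-sum spreads colors uniformly when we exit the face; $g''$ omits the $X$-sum so the color stays constant inside a face.

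As in Lemma~\ref{2colorsinq}, the neighborhood of $y$ decomposes into blocks $\mathcal{Z}^i_\alpha$ of size $q$ indexed by pairs $(i,\alpha)$ with $\alpha\neq(X_y)_i$; inside each block $X_z$ is constant while $(J_z)_i$ ranges over all $q$ values. I classify these pairs by whether $X_z$ lies inside $F_{X_y}$, in $f^{-1}(1)\setminus F_{X_y}$, or in $f^{-1}(2)$. The first case occurs exactly when $i\in I(X_y)$, giving $k(q-1)$ pairs with $I(X_z)=I(X_y)$. The crucial observation, which is the technical heart of the proof, is that in the remaining two cases (as well as for every color-$1$ neighbor of a color-$2$ vertex) we have $i\notin I(X_z)$: otherwise $F_{X_z}$ would be a color-$1$ face containing the vertex obtained from $X_z$ by restoring its $i$-th coordinate to $(X_y)_i$, that is $X_y$ itself, contradicting either the uniqueness of the face through $X_y$ or the fact that $X_y$ has color $2$.

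Granted this observation, the quotient matrices follow from a direct count. For pairs of the first kind, all $q$ vertices of $\mathcal{Z}^i_\alpha$ share a common value of $g'$, namely $g'(y)+(\alpha-(X_y)_i)\bmod q$, and of $g''$, namely $g''(y)$; summing over the $k(q-1)$ pairs gives contributions of $0$ and $kq$ to the diagonal and off-diagonal entries of $T'$, and of $k(q-1)q$ and $0$ to those of $T''$. For pairs of the second and third kinds the changed coordinate $i$ appears in the $J$-summation index set of $g'(z)$ (respectively $g''(z)$) by the key observation, so varying $(J_z)_i$ makes the color hit each element of $\{1,\ldots,q\}$ exactly once when $X_z$ has color $1$, while all $q$ vertices receive color $q+1$ when $X_z$ has color $2$. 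Adding the three contributions reproduces the first $q$ rows of $T'$ and $T''$; the bottom row $(c,\ldots,c,qd)$ arises identically at a color-$2$ vertex via the same key observation. The main obstacle is precisely this observation $i\notin I(X_z)$ in the non-inside cases; without it the three classes of pairs would not combine into balanced row totals.
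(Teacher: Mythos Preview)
Your construction is essentially the paper's: you define $g'$ and $g''$ through the same $X_y/J_y$ decomposition, and your sums $\sum_{j\in I}(X_y)_j+\sum_{j\notin I}(J_y)_j$ and $\sum_{j\notin I}(J_y)_j$ are exactly the paper's $R'(j_1,\ldots,j_n)$ and $R''(j_{i_1},\ldots,j_{i_{n-k}})$ specialised to the sum quasigroup. The block decomposition into the sets $\mathcal{Z}^i_\alpha$ and the three-way case split (inside the face, colour~1 outside the face, colour~2) are identical, and your neighbour counts match the paper's.

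Where you actually improve on the paper is the point you call the ``crucial observation''. The paper, when handling a pair $(i,\alpha)$ with $i\notin I(X_y)$ and $f(X_z)=1$, simply writes ``acting similar to the proof of Lemma~\ref{2colorsinq}, \ldots all $q$ vertices \ldots are colored into all different colors'', and likewise for the colour-$(q{+}1)$ row. But this needs $(J_z)_i$ to enter the quasigroup argument of $g'(z)$ (respectively $g''(z)$), i.e.\ $i\notin I(X_z)$, not merely $i\notin I(X_y)$. You make this explicit and justify it via the face partition: if $i\in I(X_z)$ then $F_{X_z}$ would contain $X_y$, contradicting either disjointness of the faces or $f(X_y)=2$. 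This is the right argument, and it is what the paper is implicitly relying on. So your proof is correct, follows the same route, and is slightly more careful at the one delicate step.
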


\begin{proof}
In the proof of this lemma, we use the same notations as in Lemma~\ref{2colorsinq}.
In addition, if a vertex $x$ of $H(n,q)$ is colored with the first color in the coloring $f$, then let $I_x = \{ i_1, \ldots, i_k\}$ be the set of $k$ free directions in the $k$-dimensional face $\Gamma$ containing the vertex $x$ in the demanded decomposition of this color into faces. We will say that $I_x$ is the set of special directions for the vertex $x$.

 For an arbitrary $n$-ary quasigroup $R'$ of order $q$ on vertices of $H(n,q)$,  define a coloring $g'$ of $H(qn,q)$ as
$$g'(y) = g'(y^1, \ldots, y^n) = \left\{   \begin{array}{l} 
R'(j_1, \ldots, j_n) \mbox{ if } f(X_y) = 1 , \mbox{ where } j_i = h(y^i) \mbox{ for } i \in I_{X_y} \mbox{ and }  y^i \in L^{h(y^i)}_{j_i} \mbox{ otherwise}; \\
q + 1 \mbox{ if } f(X_y) = 2 ,
 \end{array}  \right. $$
 and for  a $(n-k)$-ary quasigroup $R''$  of order $q$ define a coloring $g''$ of $H(qn,q)$ as
$$g''(y) = g''(y^1, \ldots, y^n) = \left\{   \begin{array}{l} 
R''(j_{i_1}, \ldots, j_{i_{n-k}}) \mbox{ if } f(X_y) = 1, \mbox{ where all }  i_{l} \notin I_{X_y} \mbox{ and }  y^{i_l} \in L^{h(y^{i_l})}_{j_{i_l}} ;\\
q + 1 \mbox{ if } f(X_y) = 2.
 \end{array}  \right. $$
Let us prove that $g'$ and $g''$ are perfect $(q+1)$-colorings with the quotient matrices $T'$ and $T''$ respectively. 

As before, let $\mathcal{Z}^i_\alpha$ be a set of $q$ vertices $z$ adjacent to $y$ such  that  $z$ differs from $y$ in some component of $y^i$ from $H(q,q)$ with $\alpha = h(z^i) \neq h(y^i)$. 

1. We firstly prove that $g'$ is a perfect coloring. 

Assume that a vertex $y$ is colored into one of the colors $\{ 1, \ldots, q\}$ by the coloring $g'$. Acting similar to the proof of Lemma~\ref{2colorsinq}, we see that all $q$ vertices $z$ from the set $\mathcal{Z}_\alpha^i$ with $i \notin I_{X_y}$ and $f(X_z) =1$ are colored into all different colors. By definitions, all $qb$ vertices $z$  from sets $\mathcal{Z}_\alpha^i$ with $f(X_z) =2$  are different from $y$ in components $y^i$, where $i$ is not a special direction ($i \notin I_{X_y}$), so all these vertices are colored into color $q+1$ in the coloring $g'$.

Consider the set of vertices  $\mathcal{Z}_\alpha^i$ adjacent to the vertex $y$ along a special direction $i \in I_{X_y}$. By definition of the coloring $g'$, all $q$ vertices $z$ from $\mathcal{Z}_\alpha^i$ are colored into one color that is different from the color of the vertex $y$. Moreover, while $\alpha$ runs over all $q-1$ values different from $h(y^i)$, the colors of the vertices in sets $\mathcal{Z}_\alpha^i$ run over all $q-1$ colors different from the color of $y$.

The above reasoning implies that in the coloring $g'$ every vertex $y$ of a color from the set $\{ 1, \ldots, q \}$ is adjacent to $a - k(q-1)$ vertices of the same color, to $qb$ vertices of the color $q+1$ and to $a+k$ vertices of each of the remaining colors.

If the vertex $y$ is colored into the color $q+1$ by $g'$ then all vertices $z$ from sets $\mathcal{Z}_\alpha^i$  are either  colored into the color $q+1$ (when $f(X_z) = 2$) or colored into all $q$ colors $\{ 1, \ldots, q\}$ (when $f(X_z) = 1$), that gives us the required parameters.

2. Let us prove that $g''$ is a perfect coloring.

The coloring $g''$ is different from $g'$ only in neighborhoods along special directions $i \in I_{X_y}$ of vertices $y$  for which $f(X_y) = 1$. In that case, all $q$ vertices $z$ from the sets $\mathcal{Z}_\alpha^i$ are colored into the same color as the vertex $y$. 

It implies that in the coloring $g''$ every vertex $y$ of a color from the set $\{ 1, \ldots, q \}$ is adjacent to $a + k(q-1)^2$ vertices of the same color, to $qb$ vertices of the color $q+1$ and to $a-k(q-1)$ vertices of each of the remaining colors.

If the vertex $y$ 
is colored into the color $q+1$
by the coloring $g''$,
then we have the same coloring 
of its neighborhood 
as for the coloring $g'$.
\end{proof}

\begin{remark}
In constructions of colorings in Lemmas~\ref{2colorsinq} and~\ref{2colorswithkinq} we can take different quasigroups $R$, $R'$ and $R''$ for suitable sets of vertices that gives many nonequivalent colorings with the same parameters.
\end{remark}

Now we are ready to prove 
the main constructions. 
We start with a construction 
working for  perfect colorings
with the non-positive 
second eigenvalue.

\begin{theorem} \label{Flaassstand}
Assume that there is
a $1$-perfect code in $H(q+1, q)$. 
 If $f$ is a $(b,c)$-coloring
 in  $H(n,q)$ 
 with the second eigenvalue
 $\theta \leq 0$, 
 then for all
 $\tau_1, \tau_2 = 0, \ldots, q $,
 $\tau_1 + \tau_2 \neq 0, 2q$ 
 there exists  a
 $(q(b+c)- (c\tau_1 + b\tau_2), 
   c\tau_1 + b\tau_2)$-coloring $F$
 of  $H(qn - \theta, q)$.
 Moreover, $F$ has the same second 
 eigenvalue $\theta$.
\end{theorem}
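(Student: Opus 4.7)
The plan is to assemble the theorem by combining Lemma~\ref{2colorsinq} with Proposition~\ref{invasion}(2), which were evidently designed for exactly this purpose. The hypothesis $\lambda\le 0$ will supply the nonnegativity condition needed by the invasion construction, and the particular parameter shifts will fall out by direct substitution.

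First, I would apply Lemma~\ref{2colorsinq} to the given $(b,c)$-coloring $f$: since a $1$-perfect code in $H(q+1,q)$ is assumed to exist, this lemma produces a perfect $2q$-coloring $g$ of $H(qn,q)$ whose quotient matrix is the block matrix $T$ with $a$, $b$, $c$, $d$ filling the four $q\times q$ blocks. This is the ``splitting'' step: one cell of $f$ has been divided into $q$ equivalent refined cells.

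Next, I would set $m := c-a$. From the row-sum identity $a+b=c+d=n(q-1)$ we obtain $b-d=c-a$, so with the identification $\alpha=a$, $\beta=b$, $\gamma=c$, $\delta=d$ we have $\gamma-\alpha=\beta-\delta=m$. The hypothesis $\lambda=a-c\le 0$ guarantees $m=-\lambda\ge 0$, which is precisely the nonnegativity condition demanded by Proposition~\ref{invasion}(2). I can then feed $g$ into that proposition with the chosen $t_1,t_2\in\{0,\ldots,q\}$, $t_1+t_2\notin\{0,2q\}$, invading by $(G^1_{t_1},\ldots,G^q_{t_1},G^1_{t_2},\ldots,G^q_{t_2})$ built from a decomposition of $H(m,q)$ into $q$ distance-$2$ MDS codes; this decomposition exists because $H(q,q)$ admits such a decomposition (Proposition~\ref{Hqqdecomp}) and decompositions of $H(m,q)$ for larger $m$ follow by taking products (for $m=0$ the $G^i_t$ are by definition solid colorings on a single vertex, handled directly in Proposition~\ref{invasion}). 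The proposition immediately yields that the resulting $F$ is a perfect $2$-coloring of $H(qn+m,q)=H(qn-\lambda,q)$ with
\[
b'=q(\gamma+\beta)-\gamma t_1-\beta t_2=q(b+c)-(ct_1+bt_2), \qquad c'=\gamma t_1+\beta t_2=ct_1+bt_2,
\]
matching the claimed parameters.

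Finally, I would verify that the main eigenvalue is preserved. Using $b+c=n(q-1)-\lambda$, the main eigenvalue of $F$ is
\[
(qn-\lambda)(q-1)-(b'+c')=(qn-\lambda)(q-1)-q(b+c)=(qn-\lambda)(q-1)-q\bigl(n(q-1)-\lambda\bigr)=\lambda.
\]
This is a one-line computation. The main ``obstacle'' is really just bookkeeping: confirming that $\gamma-\alpha=\beta-\delta$ forces $\lambda\le 0$ to be the correct sign hypothesis, and that the $m=0$ boundary case (when $\lambda=0$, so the invasion adds no coordinates) is genuinely covered by the degenerate definition of $G^i_t$ on a single vertex. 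Once those compatibility checks are in place, the statement follows directly from the two cited results.
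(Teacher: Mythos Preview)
Your argument is correct and follows essentially the same route as the paper: apply Lemma~\ref{2colorsinq} to obtain the $2q$-coloring of $H(qn,q)$, set $m=-\lambda\ge 0$, feed this into Proposition~\ref{invasion}(2), and compute the eigenvalue. One small remark: your detour through Proposition~\ref{Hqqdecomp} and ``products'' to justify the partition of $H(m,q)$ into $q$ distance-$2$ MDS codes is unnecessary (and does not obviously cover $1\le m<q$); such a partition exists trivially for every $m\ge 1$ via $M^j=\{x:\sum_i x_i\equiv j\pmod q\}$, and the paper simply takes this for granted.
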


\begin{proof}

Since there is a $1$-perfect code in $H(q+1,q)$, 
by Lemma~\ref{2colorsinq} 
there is a perfect 
$2q$-coloring $g$ 
in the Hamming graph $H(qn,q)$ 
with the quotient matrix 
$$
T = \left( \begin{array}{cccccc}
a & \cdots & a & b & \cdots & b \\
\vdots& \ddots & \vdots & \vdots & \ddots & \vdots \\
a & \cdots & a & b & \cdots & b \\
c & \cdots & c & d & \cdots & d \\
\vdots& \ddots & \vdots & \vdots & \ddots & \vdots \\
c & \cdots & c & d & \cdots & d \\
\end{array} \right). 
$$
Let 
$m = c - a = b - d = -\theta$. 
By the hypothesis 
of the theorem, 
we have $m \geq 0$. 
With the help of
Proposition~\ref{invasion}(2), 
we obtain that for each 
$\tau_1, \tau_2 = 0, \ldots, q$ 
there exists a 
$(\tilde{b}, \tilde{c})$-coloring
$F$ of $H(qn+m,q)$ with 
$\tilde{b} = q(b+c) -  c \tau_1 - b\tau_2$  
and $\tilde{c} = c \tau_1 + b \tau_2$.
 
The second eigenvalue 
of the perfect coloring $F$ is
$$
\tilde{\theta} = (q-1)(qn - \theta) - \tilde{b} - \tilde{c} = (q-1)(qn - \theta) - q (b+c) = (q-1)(qn - \theta) - q ((q-1)n - \theta) = \theta.
$$
\end{proof}

The next theorem is a more general
variant of the construction above,
applicable when the first color 
of a perfect coloring $f$ 
can be divided into 
$k$-dimensional faces.

\begin{theorem} \label{Flaassimprov}
Assume that there is
a $1$-perfect code in $H(q+1, q)$. 
Let $f$ be a $(b,c)$-coloring of $H(n,q)$ 
with the second eigenvalue $\theta$.
Suppose that the vertex set 
of the first color of $f$ 
can be partitioned into $k$-faces.
Then the following holds:
\begin{enumerate}
\item If $\theta + k \leq 0$, 
then for all 
$ \tau = 1, \ldots, q$ 
there is a 
$(q (b+c)- \tau c, \tau c)$-coloring $F$ 
of  $H(q n - \theta - k, q)$. 
The second eigenvalue of $F$ is
$\theta - k(q-1)$.
\item If $\theta\leq k(q-1)$,
then for all $ \tau = 1, \ldots, q$
there is a 
$(q(b+c)- \tau c, \tau c)$-coloring  $F$
of $H(qn - \theta + k(q-1), q)$. 
The second eigenvalue of $F$ is
$\theta + k(q-1)^2$.
\end{enumerate}
\end{theorem}

\begin{proof}
1. Since there is a $1$-perfect code 
in $H(q+1,q)$, 
by Lemma~\ref{2colorswithkinq} 
there is a perfect 
$(q+1)$-coloring $g'$ 
in $H(qn,q)$ with 
the quotient matrix 
$$
T' = \left( \begin{array}{cccc}
a - k(q-1) &   \cdots & a+k & qb  \\
\vdots & \ddots & \vdots & \vdots  \\
a + k  &  \cdots & a-k(q-1) & qb  \\
c & \cdots & c & qd   \\
\end{array} \right). 
$$
Let $m$ be 
$ c - a - k = -\theta - k \geq 0$. 
With the help of 
Proposition~\ref{invasion}(1),
we obtain that for each 
$\tau = 0, \ldots, q$ 
there exists a 
$(\tilde{b}, \tilde{c})$-coloring $F$ 
of $H(qn+m,q)$ with 
$\tilde{b} = q(b+c) -  c \tau $ 
and $\tilde{c} = c \tau $.

The second eigenvalue 
of the perfect coloring $F$ is
$$
\tilde{\theta} 
= 
(q-1)(qn - \theta -k) - \tilde{b} - \tilde{c} 
= 
(q-1)(qn - \theta -k) - q (b+c) 
= \theta - k(q-1).
$$

2. Since there is a $1$-perfect code 
in $H(q+1,q)$, 
by Lemma~\ref{2colorswithkinq} 
there exists a perfect 
$(q+1)$-coloring $g''$ 
of $H(qn,q)$ with the quotient matrix 
$$
T'' = \left( \begin{array}{cccc}
a + k(q-1)^2 &   \cdots & a-k(q-1) & qb  \\
\vdots & \ddots & \vdots & \vdots  \\
a - k(q-1)  &  \cdots & a+k(q-1)^2 & qb  \\
c & \cdots & c & qd   \\
\end{array} \right). 
$$
Let $m$ be
$ c - a + k(q-1) = -\theta + k(q-1) \geq 0$. 
With the help of 
Proposition~\ref{invasion}(1), 
we obtain that for each 
$\tau = 0, \ldots, q$ 
there exists a 
$(\tilde{b}, \tilde{c})$-coloring 
$F$ of $H(qn+m,q)$ with 
$\tilde{b} = q(b+c) -  c \tau$  
and $\tilde{c} = c \tau$.
 
The second eigenvalue of $F$ is
$$
\tilde{\theta}
= 
(q-1)(qn - \theta +k(q-1)) 
     - \tilde{b} - \tilde{c} 
= \theta + k(q-1)^2.
$$
\end{proof}

Note that for the perfect coloring  $F$ constructed in Theorem~\ref{Flaassimprov}(2) the vertex set of the first color can be partitioned into $kq$-dimensional faces. Moreover, for the second eigenvalue of the new coloring we have the inequality $\theta + k(q-1)^2 \leq kq(q-1)$.   So we can iteratively apply Theorem~\ref{Flaassimprov}(2) to the resulting coloring $F$.

\begin{corollary}
Assume that there is
a $1$-perfect code in $H(q+1, q)$. 
Let $f$ be a $(b,c)$-coloring of $H(n,q)$ 
with the second eigenvalue $\theta$.
If the vertex set 
of the first color of $f$ 
can be partitioned into $k$-faces,
then the following holds. If $\theta\leq k(q-1)$,
then for all $\tau_1, \ldots, \tau_r = 1, \ldots, q$
there is a 
$(q^r(b+c)- \tau_1 \cdots \tau_r c, \tau_1 \cdots \tau_r c)$-coloring  
of $H(q^r n + (k(q-1) - \theta) \frac{q^r - 1}{q-1}, q)$ 
with the second eigenvalue 
$\theta + k(q-1) (q^r -1)$.
\end{corollary}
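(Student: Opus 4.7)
The plan is to prove the corollary by induction on $r$, iteratively applying Theorem~\ref{Flaassimprov}(2). Set $k_0 = k$, $\lambda_0 = \lambda$, $n_0 = n$, $b_0 = b$, $c_0 = c$, and inductively define, for each application of the theorem with parameter $t = t_{r+1}$, the new quantities
\[
k_{r+1} = kq^{r+1},\quad
\lambda_{r+1} = \lambda_r + k_r(q-1)^2,\quad
n_{r+1} = q n_r - \lambda_r + k_r(q-1),
\]
\[
b_{r+1} = q(b_r+c_r) - t_{r+1} c_r,\quad
c_{r+1} = t_{r+1} c_r.
\]
The base case $r=0$ is trivially the given coloring $f$, together with the partition of its first color into $k$-faces.

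For the inductive step, I would need to check three things before applying Theorem~\ref{Flaassimprov}(2) at stage $r$. First, the first color of the stage-$r$ coloring admits a partition into $k_r = kq^r$-faces: this is guaranteed by the remark following Theorem~\ref{Flaassimprov} (the construction of part (2) produces a coloring whose first color is partitioned into $k_r q$-faces). Second, the eigenvalue hypothesis $\lambda_r \le k_r(q-1)$ must hold; a short computation gives $\lambda_r = \lambda + k(q-1)(q^r - 1)$ and $k_r(q-1) = kq^r(q-1)$, so
\[
k_r(q-1) - \lambda_r = k(q-1)\bigl[q^r - (q^r-1)\bigr] - \lambda = k(q-1) - \lambda \ge 0
\]
by the standing assumption. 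Third, the existence of a $1$-perfect code in $H(q+1,q)$ is given.

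The main bookkeeping task is then to verify the closed-form expressions for $n_{r+1}$, $\lambda_{r+1}$, $b_{r+1}$, $c_{r+1}$ claimed by the corollary. Unwinding the recurrences:
\[
c_r = t_1 \cdots t_r c,\qquad b_r + c_r = q^r(b+c),\qquad b_r = q^r(b+c) - t_1 \cdots t_r c,
\]
\[
\lambda_r = \lambda + k(q-1)(q^r-1),\qquad
n_r = q^r n + (k(q-1)-\lambda)\cdot \tfrac{q^r - 1}{q-1}.
\]
The first three are immediate. The fourth is a telescoping computation $\lambda_{r+1}-\lambda_r = kq^r(q-1)^2$, which sums to $k(q-1)^2 \cdot \tfrac{q^{r+1}-1}{q-1} = k(q-1)(q^{r+1}-1)$. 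The formula for $n_r$ follows from the linear recurrence $n_{r+1} = q n_r + k_r(q-1) - \lambda_r$ by grouping the $k(q-1)$- and $-\lambda$-coefficients separately, each of which collapses to $\tfrac{q^{r+1}-1}{q-1}$ via the identity $\tfrac{q(q^r-1)}{q-1} + 1 = \tfrac{q^{r+1}-1}{q-1}$.

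The arguments are all routine; the only mildly subtle step is the arithmetic showing that the iteration preserves the admissibility condition $\lambda_r \le k_r(q-1)$ with equality in the difference $k(q-1) - \lambda$, so that the hypothesis $\lambda \le k(q-1)$ suffices once and for all. With these verifications the induction closes and the corollary follows.
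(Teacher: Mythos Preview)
Your proposal is correct and follows exactly the approach the paper indicates: the paper does not write out a proof of this corollary at all, merely remarking (just before stating it) that the output of Theorem~\ref{Flaassimprov}(2) has its first color partitioned into $kq$-faces and satisfies $\lambda + k(q-1)^2 \le kq(q-1)$, so the theorem can be applied iteratively. You have filled in the routine bookkeeping that the paper omits, including the nice observation that the recurrence for $n_r$ simplifies to $n_{r+1} = qn_r + (k(q-1)-\lambda)$ because the difference $k_r(q-1) - \lambda_r$ is constant in $r$.
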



\subsection{Splitting construction II}\label{s:split2}

We conclude this section 
with one more splitting construction
based on the covering 
of the graph $pH(n,q) +n(p-1)I$ by  $H(n,pq)$.

\begin{theorem} \label{splitconstr}
Let $f$ be a $(q^2-1,1)$-coloring 
of $H(q+1,q)$ 
corresponding to a 
$1$-perfect code 
such that the vertex set 
of the second (non-code)
color can be partitioned
into lines.
Then for every 
$p \in \mathbb{N}$ 
and for each 
$\tau$, $0 \leq \tau \leq p-1$
there exists a 
$((q^2-1)(p- \tau ) ,  
 (q^2-1) \tau  + p)$-coloring 
of  $H(q+1,pq)$ 
(with the second 
eigenvalue $q(p-1)-1$).
\end{theorem}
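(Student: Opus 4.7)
The plan is to use the covering $\varphi\colon H(q+1,pq)\to pH(q+1,q)+(q+1)(p-1)I$ from Proposition~\ref{coverpq}, together with the line partition of $\bar C$, to construct a perfect $(p+1)$-coloring of $H(q+1,pq)$ whose appropriate merging yields the desired 2-coloring. Write each $y\in\mathbb{Z}_{pq}^{q+1}$ as $y=q\tilde y+\bar y$ where $\bar y=y\bmod q\in\mathbb{Z}_q^{q+1}$ and $\tilde y\in\mathbb{Z}_p^{q+1}$, so that $\varphi(y)=\bar y$. The case $t=0$ is immediate: the lifted coloring $y\mapsto f(\bar y)$ is a $(p(q^2-1),p)$-coloring of $H(q+1,pq)$ by Theorem~\ref{altimesconstr}.

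For general $t$, I would define a $(p+1)$-coloring $g$ as follows. Assign color $0$ to $y$ whenever $\bar y\in C$. For each $\bar v\in\bar C$, let $i_{\bar v}$ denote the direction of the unique line of the given partition that contains $\bar v$, and let $j_{\bar v}$ denote the direction from $\bar v$ to its unique nearest codeword (well-defined since $C$ is $1$-perfect). Note that $i_{\bar v}\ne j_{\bar v}$, since the line in direction $i_{\bar v}$ through $\bar v$ lies entirely in $\bar C$ while $\bar v+s\,e_{j_{\bar v}}\in C$ for some nonzero $s$. For $y$ with $\bar y=\bar v\in\bar C$, set $g(y)=1+(\tilde y_{i_{\bar v}}+\tilde y_{j_{\bar v}})\bmod p\in\{1,\ldots,p\}$. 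The target quotient matrix for $g$ has $(q+1)(p-1)$ at position $(0,0)$ and $q^2-1$ elsewhere in row $0$, while each row $k\ge 1$ has $p$ in column $0$, $q^2+pq-q-2$ on the diagonal, and $q^2-1$ in all other columns.

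Verifying that $g$ is a perfect $(p+1)$-coloring with this matrix is the main technical step. The analysis proceeds by case distinction on each neighbour $y+(qr+s)e_j$ of $y$, with $r\in\mathbb{Z}_p$, $s\in\{0,\ldots,q-1\}$, $(r,s)\ne(0,0)$. When $s=0$, $\bar y$ is unchanged and the colour of the neighbour depends on whether $j\in\{i_{\bar v},j_{\bar v}\}$ (in which case the sum $\tilde y_{i_{\bar v}}+\tilde y_{j_{\bar v}}$ changes by $r$ and hence cycles through all $p$ residues as $r$ varies) or not (in which case the colour stays constant). When $s\ne 0$, the neighbour has $\bar y$ shifted to $\bar v+s\,e_j$, which is either the codeword $c(\bar v)$ (contributing $p$ colour-$0$ neighbours, from the unique such pair) or another non-code vertex $\bar v'$; in the latter case, a similar cyclic-or-constant behaviour holds according to whether $j\in\{i_{\bar v'},j_{\bar v'}\}$. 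Combining these contributions and exploiting the structural properties of the 1-perfect code (in particular, how the $\{i,j\}$-pairs of neighbouring non-code vertices align) yields the stated matrix.

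Once $g$ is in place, merging colours $\{0,1,\ldots,t\}$ into new colour $1$ and $\{t+1,\ldots,p\}$ into new colour $2$ is a short algebraic verification: the row sums in the merged groups coincide precisely because of the values $\alpha=(q+1)(p-1)$, $\beta=\epsilon=q^2-1$, $\gamma=p$, and $\delta=q^2+pq-q-2$. The resulting 2-coloring has parameters $b=(q^2-1)(p-t)$ and $c=(q^2-1)t+p$, with main eigenvalue $(q+1)(pq-1)-(b+c)=q(p-1)-1$. The main obstacle is the perfect-ness verification for $g$: ensuring that the $\tilde y$-dependent contributions from ``external'' perturbations (those in directions disjoint from the special directions of the relevant target vertex) cancel appropriately; this is where one must use the subtle symmetries between neighbouring non-code vertices that arise from the 1-perfect structure together with the partition of $\bar C$ into lines.
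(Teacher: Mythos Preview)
Your construction of the intermediate $(p+1)$-coloring $g$ is not perfect when $q\ge 3$, so the argument cannot be completed as written. The difficulty is precisely in the step you flag as ``the main obstacle.'' Consider a vertex $y$ with $\bar y=\bar v\in\bar C$ and a direction $l\notin\{i_{\bar v},j_{\bar v}\}$. For an external neighbour $z$ with $\bar z=\bar v'=\bar v+se_l$ one checks that $l\notin\{i_{\bar v'},j_{\bar v'}\}$ (the line through $\bar v$ in direction $l$ contains no codeword and is not a partition line, since $i_{\bar v}\ne l$). Hence the colour of every such $z$ equals $1+(\tilde y_{i_{\bar v'}}+\tilde y_{j_{\bar v'}})\bmod p$, a constant over the $p$ choices of $\tilde z_l$. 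For $q=2$ the set $\{i_{\bar v'},j_{\bar v'}\}$ is forced to equal $\{i_{\bar v},j_{\bar v}\}$ (there is only one remaining direction), so this constant coincides with $g(y)$; but for $q\ge 3$ it need not. Concretely, in $H(4,3)$ with the Hamming code and $\bar v=(1,0,0,0)$, $j_{\bar v}=1$, one finds $\bar v'=(1,0,1,0)$ has $j_{\bar v'}=4$. Then two vertices $y^{(1)},y^{(2)}$ over $\bar v$ with $\tilde y^{(1)}=(0,0,0,0)$ and $\tilde y^{(2)}=(0,0,0,1)$ have the same colour $g(y^{(1)})=g(y^{(2)})$, yet their $p$ neighbours over $\bar v'$ all carry different colours. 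So $g$ is not equitable, and no ``subtle symmetry'' rescues it: the coordinate $\tilde y_4$ affects the neighbour count without affecting $g(y)$.

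The paper's construction avoids this by using \emph{all} $q$ coordinates of $\tilde y$ \emph{except} the special one $i_{\bar v}$, fed into a $t$-fold MDS $2$-coloring $g$ of $H(q,p)$, and it builds the $2$-coloring directly rather than via an intermediate $(p+1)$-coloring. The point is that for an external move in any direction $l\ne i_{\bar v}$, the target $\bar v'$ satisfies $i_{\bar v'}\ne l$; therefore coordinate $l$ is among the $q$ coordinates used at $\bar v'$, and as $\tilde z_l$ runs over $\mathbb Z_p$ the MDS property gives exactly $t$ colour-$1$ and $p-t$ colour-$2$ neighbours. For $l=i_{\bar v}$ one has $i_{\bar v'}=i_{\bar v}$ and the colour is unchanged. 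Your two-coordinate recipe $\tilde y_{i_{\bar v}}+\tilde y_{j_{\bar v}}$ loses exactly this control once $q+1>3$.
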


\begin{proof}
For a vertex
$x \in f^{-1}(2)\subset \mathbb{Z}_q^{q+1}$,
let $i_x$ denote 
the direction of the line 
containing this vertex 
in the demanded partition. 
We will say that $i_x$ 
is the special direction 
for the vertex $x$.
For a vertex $y$ 
of $H(q+1, pq)$, 
we denote by $X_y$ 
the vertex of $H(q+1,q)$ 
defined as 
$X_y \equiv y \mod q$
entrywise, and by $X'_y$,
the vertex of $H(q+1,p)$
equal to 
$(\lfloor \frac{y_1}{q}
\rfloor, \ldots, \lfloor
\frac{y_{q+1}}{q}  
\rfloor)$.

Let $g$ be a 
$(q (p- \tau ),q \tau )$-perfect coloring 
in  $H(q,p)$ 
corresponding to a 
$\tau$-fold MDS code 
in this graph.  
   Define a coloring $h$
   in  $H(q+1, pq)$ as follows
$$
h(y) = 
h(y^1, \ldots, y^{q+1}) =
\left\{   
\begin{array}{l} 
1 \mbox{ if } f(X_y) = 1; 
\\
g(x'_{i_1}, \ldots, x'_{i_{q}})\mbox{ if }  f(X_y) = 2, \mbox{ where all } i_l \neq i_{X_y},
 \end{array}  \right. $$
  where  $(x'_1, \ldots, x'_{q+1}) = X'_y $.
  Let us show that 
  the coloring $h$
  is a perfect coloring 
  of $H(q+1, pq)$ 
  with demanded parameters.
 
1. Let $y$ be a vertex
of $H(q+1,pq)$ 
colored by $h$ 
into the first color.
Let us count the number
of  $2$-colored vertices 
$z$ adjacent to $y$.
Denote 
$\mathcal{Z}_i^\alpha$ 
to be the set 
of $p$ vertices 
adjacent to $y$ 
such that 
$z_i  \equiv \alpha \not\equiv y_i \mod q$.

Assume that $f(X_y) = 1$.
Note that there are no
adjacent vertices $z$ 
colored by $h$ 
into second color 
such that $f(X_z) = 1$.
For each vertex $y$
there are exactly
$q^2-1$ sets 
$\mathcal{Z}_i^{\alpha}$
and, because $i$ is not
a special direction
for all $X_z$, 
$z \in \mathcal{Z}_i^\alpha$, 
each set
$\mathcal{Z}_i^{\alpha}$ 
contains exactly $\tau$
vertices colored by $h$
into the first color 
and $p- \tau $ vertices 
colored into the second color. 
So each such vertex $y$ 
is adjacent to
$(q^2-1) (p- \tau )$ 
vertices of the second color. 

Suppose now that 
$f(X_y) = 2$. 
By the definition 
of the coloring $h$, 
there are $q(p- \tau )$ vertices
$z$ adjacent to $y$ 
such that  
$y_i \equiv z_i \mod q$ 
for all $i$
and $h(z) =2$. 
As before, 
for each vertex $y$ 
there are exactly $q^2-1$
sets 
$\mathcal{Z}_i^{\alpha}$.
If $i = i_{X_y}$ 
is a special direction 
for the vertex $X_y$ 
in $H(q+1,q)$,
then for all 
$\alpha \not\equiv y_i \mod q$ 
all vertices $z$ 
from the sets 
$\mathcal{Z}_i^{\alpha}$
are colored 
into the first color by $h$. 
On the other hand, 
if $i \neq i_{X_y}$
then each of $q^2 - q -1$ sets $\mathcal{Z}_i^{\alpha}$
with $f(X_z) = 2$ 
contains exactly $\tau$ vertices $z$ 
such that $h(z) =1$ 
and $p- \tau $ vertices $z$ 
for which $h(z) = 2$.
The remaining set  
$\mathcal{Z}_i^{\alpha}$ 
with $z$ such that 
$f(X_z) =1$
contains only vertices
colored into 
the first color by $h$. 
Summing up,
each such vertex $y$ 
with $h(y) =1$ 
is adjacent to 
$(q^2-1) (p-\tau)$
vertices of the second
color in the coloring $h$.

 2. Let $y$ be a vertex 
 of $H(q+1,pq)$ 
 colored by $h$ 
 into the second color 
 and  let us count 
 the number of vertices $z$ 
 adjacent to $y$ 
 and colored 
 to the first color.
 Note that for all 
 such vertices $y$ 
 we have $f(X_y) = 2$.
 
 By the definition
 of the coloring $h$, 
 there are $q \tau$ vertices 
 $z$ adjacent to $y$ 
 such that  
 $y_i \equiv z_i \mod q$
 for all $i$
 and $h(z) =1$.
 Consider the sets 
 $\mathcal{Z}_i^{\alpha}$
 of vertices $z$ 
 adjacent to $y$ 
 such that 
 $z_i  \equiv \alpha \not\equiv y_i \mod q$. 
 If $i = i_{X_y}$
 is the special direction
 for the vertex $X_y$ 
 in $H(q+1,q)$,
 then for all 
 $\alpha \not\equiv y_i \mod q$
 all vertices $z$ 
 from the sets $\mathcal{Z}_i^{\alpha}$ 
 are colored into 
 the second color by $h$.
 On the other hand, 
 if $i \neq i_{X_y}$,
 then each of
 $q^2 - q -1$ sets $\mathcal{Z}_i^{\alpha}$ with $f(X_z) = 2$ 
 contains exactly $\tau$
 vertices $z$ 
 such that $h(z) =1$ and $p-\tau$ 
 vertices $z$ 
 for which $h(z) = 2$. 
 As before, 
 the remaining set  
 $\mathcal{Z}_i^{\alpha}$
 with $z$ satisfying
 $f(X_z) =1$ 
 contains only vertices 
 colored by $h$ 
 into the first color.  
 Summing up, 
 each vertex $y$ 
 with $h(y) =2$ 
 is adjacent to 
 $(q^2-1) \tau +p$ 
 vertices of the first color 
 in the coloring $h$.
 
 The second eigenvalue 
 of $h$ is
 $$
 \tilde{\theta} = (q+1) (pq-1) - q^2 p = q(p-1) - 1.
 $$
\end{proof}

\begin{remark}
The second color of the constructed coloring $h$ can be also partitioned into lines. So Theorem~\ref{splitconstr} allows an iterative application.
\end{remark}
 
Let us say few words on parameters for which the above constructions are applicable.

By Proposition~\ref{dcmpboolclr},
we can use (once or recursively)
the construction from 
Theorem~\ref{Flaassimprov}(1) 
for all $(b,c)$-colorings in $H(n,2)$ 
with the 
non-positive second eigenvalue. 
Theorem~\ref{Flaassimprov}(2)
can be used 
for constructing 
perfect colorings 
from colorings given after 
the application of 
Theorem~\ref{splitconstr},
because this theorem gives
colorings with large 
second eigenvalue 
and such that one 
of their colors 
can be split into faces.

As is stated in 
Proposition~\ref{dcmpboolclr}, 
the second color 
of a $(3,1)$-coloring of $H(3,2)$ 
can be partitioned 
into edges
($1$-faces, lines). 

The second color 
of a $(8,1)$-coloring 
(the complement 
of a $1$-perfect code) 
in $H(4,3)$ 
can be partitioned 
into lines. 
It is straightforward to verify 
that all such partitions
are equivalent 
to the following

\def\dxd{13}
\def\dyd{1.5}
\def\dxa{4}
\def\dya{0}
\def\dxc{0}
\def\dyc{4}
\def\dxb{1.6}
\def\dyb{1.1}
\def\dxba{-0.1}
\def\dyba{0}
\def\dxbc{0}
\def\dybc{-0.1}
\def\dxbd{-0.3}
\def\dybd{-0.02}

$$
\begin{tikzpicture}[scale=0.6, x=1.3em,y=1.3em,
 cll/.style={circle,draw=white, thin, minimum size=0.1em, inner sep=0pt},
 thn/.style={draw=gray!50!white, thin},
 thk/.style={draw=black,  line width=1.5pt},
 wtl/.style={draw=white,  line width=2pt}]
\node [cll] (v0000) at (0*\dxa+0*\dxb+0*\dxc+0*\dxd+0*0*\dxba+0*0*\dxbc+0*0*\dxbd,0*\dya+0*\dyb+0*\dyc+0*\dyd+0*0*\dyba+0*0*\dybc+0*0*\dybd) {};
\node [cll] (v1000) at (1*\dxa+0*\dxb+0*\dxc+0*\dxd+0*1*\dxba+0*0*\dxbc+0*0*\dxbd,1*\dya+0*\dyb+0*\dyc+0*\dyd+0*1*\dyba+0*0*\dybc+0*0*\dybd) {};
\node [cll] (v2000) at (2*\dxa+0*\dxb+0*\dxc+0*\dxd+0*2*\dxba+0*0*\dxbc+0*0*\dxbd,2*\dya+0*\dyb+0*\dyc+0*\dyd+0*2*\dyba+0*0*\dybc+0*0*\dybd) {};
\node [cll] (v0010) at (0*\dxa+0*\dxb+1*\dxc+0*\dxd+0*0*\dxba+0*1*\dxbc+0*0*\dxbd,0*\dya+0*\dyb+1*\dyc+0*\dyd+0*0*\dyba+0*1*\dybc+0*0*\dybd) {};
\node [cll] (v1010) at (1*\dxa+0*\dxb+1*\dxc+0*\dxd+0*1*\dxba+0*1*\dxbc+0*0*\dxbd,1*\dya+0*\dyb+1*\dyc+0*\dyd+0*1*\dyba+0*1*\dybc+0*0*\dybd) {};
\node [cll] (v2010) at (2*\dxa+0*\dxb+1*\dxc+0*\dxd+0*2*\dxba+0*1*\dxbc+0*0*\dxbd,2*\dya+0*\dyb+1*\dyc+0*\dyd+0*2*\dyba+0*1*\dybc+0*0*\dybd) {};
\node [cll] (v0020) at (0*\dxa+0*\dxb+2*\dxc+0*\dxd+0*0*\dxba+0*2*\dxbc+0*0*\dxbd,0*\dya+0*\dyb+2*\dyc+0*\dyd+0*0*\dyba+0*2*\dybc+0*0*\dybd) {};
\node [cll] (v1020) at (1*\dxa+0*\dxb+2*\dxc+0*\dxd+0*1*\dxba+0*2*\dxbc+0*0*\dxbd,1*\dya+0*\dyb+2*\dyc+0*\dyd+0*1*\dyba+0*2*\dybc+0*0*\dybd) {};
\node [cll] (v2020) at (2*\dxa+0*\dxb+2*\dxc+0*\dxd+0*2*\dxba+0*2*\dxbc+0*0*\dxbd,2*\dya+0*\dyb+2*\dyc+0*\dyd+0*2*\dyba+0*2*\dybc+0*0*\dybd) {};
\node [cll] (v0100) at (0*\dxa+1*\dxb+0*\dxc+0*\dxd+1*0*\dxba+1*0*\dxbc+1*0*\dxbd,0*\dya+1*\dyb+0*\dyc+0*\dyd+1*0*\dyba+1*0*\dybc+1*0*\dybd) {};
\node [cll] (v1100) at (1*\dxa+1*\dxb+0*\dxc+0*\dxd+1*1*\dxba+1*0*\dxbc+1*0*\dxbd,1*\dya+1*\dyb+0*\dyc+0*\dyd+1*1*\dyba+1*0*\dybc+1*0*\dybd) {};
\node [cll] (v2100) at (2*\dxa+1*\dxb+0*\dxc+0*\dxd+1*2*\dxba+1*0*\dxbc+1*0*\dxbd,2*\dya+1*\dyb+0*\dyc+0*\dyd+1*2*\dyba+1*0*\dybc+1*0*\dybd) {};
\node [cll] (v0110) at (0*\dxa+1*\dxb+1*\dxc+0*\dxd+1*0*\dxba+1*1*\dxbc+1*0*\dxbd,0*\dya+1*\dyb+1*\dyc+0*\dyd+1*0*\dyba+1*1*\dybc+1*0*\dybd) {};
\node [cll] (v1110) at (1*\dxa+1*\dxb+1*\dxc+0*\dxd+1*1*\dxba+1*1*\dxbc+1*0*\dxbd,1*\dya+1*\dyb+1*\dyc+0*\dyd+1*1*\dyba+1*1*\dybc+1*0*\dybd) {};
\node [cll] (v2110) at (2*\dxa+1*\dxb+1*\dxc+0*\dxd+1*2*\dxba+1*1*\dxbc+1*0*\dxbd,2*\dya+1*\dyb+1*\dyc+0*\dyd+1*2*\dyba+1*1*\dybc+1*0*\dybd) {};
\node [cll] (v0120) at (0*\dxa+1*\dxb+2*\dxc+0*\dxd+1*0*\dxba+1*2*\dxbc+1*0*\dxbd,0*\dya+1*\dyb+2*\dyc+0*\dyd+1*0*\dyba+1*2*\dybc+1*0*\dybd) {};
\node [cll] (v1120) at (1*\dxa+1*\dxb+2*\dxc+0*\dxd+1*1*\dxba+1*2*\dxbc+1*0*\dxbd,1*\dya+1*\dyb+2*\dyc+0*\dyd+1*1*\dyba+1*2*\dybc+1*0*\dybd) {};
\node [cll] (v2120) at (2*\dxa+1*\dxb+2*\dxc+0*\dxd+1*2*\dxba+1*2*\dxbc+1*0*\dxbd,2*\dya+1*\dyb+2*\dyc+0*\dyd+1*2*\dyba+1*2*\dybc+1*0*\dybd) {};
\node [cll] (v0200) at (0*\dxa+2*\dxb+0*\dxc+0*\dxd+2*0*\dxba+2*0*\dxbc+2*0*\dxbd,0*\dya+2*\dyb+0*\dyc+0*\dyd+2*0*\dyba+2*0*\dybc+2*0*\dybd) {};
\node [cll] (v1200) at (1*\dxa+2*\dxb+0*\dxc+0*\dxd+2*1*\dxba+2*0*\dxbc+2*0*\dxbd,1*\dya+2*\dyb+0*\dyc+0*\dyd+2*1*\dyba+2*0*\dybc+2*0*\dybd) {};
\node [cll] (v2200) at (2*\dxa+2*\dxb+0*\dxc+0*\dxd+2*2*\dxba+2*0*\dxbc+2*0*\dxbd,2*\dya+2*\dyb+0*\dyc+0*\dyd+2*2*\dyba+2*0*\dybc+2*0*\dybd) {};
\node [cll] (v0210) at (0*\dxa+2*\dxb+1*\dxc+0*\dxd+2*0*\dxba+2*1*\dxbc+2*0*\dxbd,0*\dya+2*\dyb+1*\dyc+0*\dyd+2*0*\dyba+2*1*\dybc+2*0*\dybd) {};
\node [cll] (v1210) at (1*\dxa+2*\dxb+1*\dxc+0*\dxd+2*1*\dxba+2*1*\dxbc+2*0*\dxbd,1*\dya+2*\dyb+1*\dyc+0*\dyd+2*1*\dyba+2*1*\dybc+2*0*\dybd) {};
\node [cll] (v2210) at (2*\dxa+2*\dxb+1*\dxc+0*\dxd+2*2*\dxba+2*1*\dxbc+2*0*\dxbd,2*\dya+2*\dyb+1*\dyc+0*\dyd+2*2*\dyba+2*1*\dybc+2*0*\dybd) {};
\node [cll] (v0220) at (0*\dxa+2*\dxb+2*\dxc+0*\dxd+2*0*\dxba+2*2*\dxbc+2*0*\dxbd,0*\dya+2*\dyb+2*\dyc+0*\dyd+2*0*\dyba+2*2*\dybc+2*0*\dybd) {};
\node [cll] (v1220) at (1*\dxa+2*\dxb+2*\dxc+0*\dxd+2*1*\dxba+2*2*\dxbc+2*0*\dxbd,1*\dya+2*\dyb+2*\dyc+0*\dyd+2*1*\dyba+2*2*\dybc+2*0*\dybd) {};
\node [cll] (v2220) at (2*\dxa+2*\dxb+2*\dxc+0*\dxd+2*2*\dxba+2*2*\dxbc+2*0*\dxbd,2*\dya+2*\dyb+2*\dyc+0*\dyd+2*2*\dyba+2*2*\dybc+2*0*\dybd) {};

\node [cll] (v0001) at (0*\dxa+0*\dxb+0*\dxc+1*\dxd+0*0*\dxba+0*0*\dxbc+0*1*\dxbd,0*\dya+0*\dyb+0*\dyc+1*\dyd+0*0*\dyba+0*0*\dybc+0*1*\dybd) {};
\node [cll] (v1001) at (1*\dxa+0*\dxb+0*\dxc+1*\dxd+0*1*\dxba+0*0*\dxbc+0*1*\dxbd,1*\dya+0*\dyb+0*\dyc+1*\dyd+0*1*\dyba+0*0*\dybc+0*1*\dybd) {};
\node [cll] (v2001) at (2*\dxa+0*\dxb+0*\dxc+1*\dxd+0*2*\dxba+0*0*\dxbc+0*1*\dxbd,2*\dya+0*\dyb+0*\dyc+1*\dyd+0*2*\dyba+0*0*\dybc+0*1*\dybd) {};
\node [cll] (v0011) at (0*\dxa+0*\dxb+1*\dxc+1*\dxd+0*0*\dxba+0*1*\dxbc+0*1*\dxbd,0*\dya+0*\dyb+1*\dyc+1*\dyd+0*0*\dyba+0*1*\dybc+0*1*\dybd) {};
\node [cll] (v1011) at (1*\dxa+0*\dxb+1*\dxc+1*\dxd+0*1*\dxba+0*1*\dxbc+0*1*\dxbd,1*\dya+0*\dyb+1*\dyc+1*\dyd+0*1*\dyba+0*1*\dybc+0*1*\dybd) {};
\node [cll] (v2011) at (2*\dxa+0*\dxb+1*\dxc+1*\dxd+0*2*\dxba+0*1*\dxbc+0*1*\dxbd,2*\dya+0*\dyb+1*\dyc+1*\dyd+0*2*\dyba+0*1*\dybc+0*1*\dybd) {};
\node [cll] (v0021) at (0*\dxa+0*\dxb+2*\dxc+1*\dxd+0*0*\dxba+0*2*\dxbc+0*1*\dxbd,0*\dya+0*\dyb+2*\dyc+1*\dyd+0*0*\dyba+0*2*\dybc+0*1*\dybd) {};
\node [cll] (v1021) at (1*\dxa+0*\dxb+2*\dxc+1*\dxd+0*1*\dxba+0*2*\dxbc+0*1*\dxbd,1*\dya+0*\dyb+2*\dyc+1*\dyd+0*1*\dyba+0*2*\dybc+0*1*\dybd) {};
\node [cll] (v2021) at (2*\dxa+0*\dxb+2*\dxc+1*\dxd+0*2*\dxba+0*2*\dxbc+0*1*\dxbd,2*\dya+0*\dyb+2*\dyc+1*\dyd+0*2*\dyba+0*2*\dybc+0*1*\dybd) {};
\node [cll] (v0101) at (0*\dxa+1*\dxb+0*\dxc+1*\dxd+1*0*\dxba+1*0*\dxbc+1*1*\dxbd,0*\dya+1*\dyb+0*\dyc+1*\dyd+1*0*\dyba+1*0*\dybc+1*1*\dybd) {};
\node [cll] (v1101) at (1*\dxa+1*\dxb+0*\dxc+1*\dxd+1*1*\dxba+1*0*\dxbc+1*1*\dxbd,1*\dya+1*\dyb+0*\dyc+1*\dyd+1*1*\dyba+1*0*\dybc+1*1*\dybd) {};
\node [cll] (v2101) at (2*\dxa+1*\dxb+0*\dxc+1*\dxd+1*2*\dxba+1*0*\dxbc+1*1*\dxbd,2*\dya+1*\dyb+0*\dyc+1*\dyd+1*2*\dyba+1*0*\dybc+1*1*\dybd) {};
\node [cll] (v0111) at (0*\dxa+1*\dxb+1*\dxc+1*\dxd+1*0*\dxba+1*1*\dxbc+1*1*\dxbd,0*\dya+1*\dyb+1*\dyc+1*\dyd+1*0*\dyba+1*1*\dybc+1*1*\dybd) {};
\node [cll] (v1111) at (1*\dxa+1*\dxb+1*\dxc+1*\dxd+1*1*\dxba+1*1*\dxbc+1*1*\dxbd,1*\dya+1*\dyb+1*\dyc+1*\dyd+1*1*\dyba+1*1*\dybc+1*1*\dybd) {};
\node [cll] (v2111) at (2*\dxa+1*\dxb+1*\dxc+1*\dxd+1*2*\dxba+1*1*\dxbc+1*1*\dxbd,2*\dya+1*\dyb+1*\dyc+1*\dyd+1*2*\dyba+1*1*\dybc+1*1*\dybd) {};
\node [cll] (v0121) at (0*\dxa+1*\dxb+2*\dxc+1*\dxd+1*0*\dxba+1*2*\dxbc+1*1*\dxbd,0*\dya+1*\dyb+2*\dyc+1*\dyd+1*0*\dyba+1*2*\dybc+1*1*\dybd) {};
\node [cll] (v1121) at (1*\dxa+1*\dxb+2*\dxc+1*\dxd+1*1*\dxba+1*2*\dxbc+1*1*\dxbd,1*\dya+1*\dyb+2*\dyc+1*\dyd+1*1*\dyba+1*2*\dybc+1*1*\dybd) {};
\node [cll] (v2121) at (2*\dxa+1*\dxb+2*\dxc+1*\dxd+1*2*\dxba+1*2*\dxbc+1*1*\dxbd,2*\dya+1*\dyb+2*\dyc+1*\dyd+1*2*\dyba+1*2*\dybc+1*1*\dybd) {};
\node [cll] (v0201) at (0*\dxa+2*\dxb+0*\dxc+1*\dxd+2*0*\dxba+2*0*\dxbc+2*1*\dxbd,0*\dya+2*\dyb+0*\dyc+1*\dyd+2*0*\dyba+2*0*\dybc+2*1*\dybd) {};
\node [cll] (v1201) at (1*\dxa+2*\dxb+0*\dxc+1*\dxd+2*1*\dxba+2*0*\dxbc+2*1*\dxbd,1*\dya+2*\dyb+0*\dyc+1*\dyd+2*1*\dyba+2*0*\dybc+2*1*\dybd) {};
\node [cll] (v2201) at (2*\dxa+2*\dxb+0*\dxc+1*\dxd+2*2*\dxba+2*0*\dxbc+2*1*\dxbd,2*\dya+2*\dyb+0*\dyc+1*\dyd+2*2*\dyba+2*0*\dybc+2*1*\dybd) {};
\node [cll] (v0211) at (0*\dxa+2*\dxb+1*\dxc+1*\dxd+2*0*\dxba+2*1*\dxbc+2*1*\dxbd,0*\dya+2*\dyb+1*\dyc+1*\dyd+2*0*\dyba+2*1*\dybc+2*1*\dybd) {};
\node [cll] (v1211) at (1*\dxa+2*\dxb+1*\dxc+1*\dxd+2*1*\dxba+2*1*\dxbc+2*1*\dxbd,1*\dya+2*\dyb+1*\dyc+1*\dyd+2*1*\dyba+2*1*\dybc+2*1*\dybd) {};
\node [cll] (v2211) at (2*\dxa+2*\dxb+1*\dxc+1*\dxd+2*2*\dxba+2*1*\dxbc+2*1*\dxbd,2*\dya+2*\dyb+1*\dyc+1*\dyd+2*2*\dyba+2*1*\dybc+2*1*\dybd) {};
\node [cll] (v0221) at (0*\dxa+2*\dxb+2*\dxc+1*\dxd+2*0*\dxba+2*2*\dxbc+2*1*\dxbd,0*\dya+2*\dyb+2*\dyc+1*\dyd+2*0*\dyba+2*2*\dybc+2*1*\dybd) {};
\node [cll] (v1221) at (1*\dxa+2*\dxb+2*\dxc+1*\dxd+2*1*\dxba+2*2*\dxbc+2*1*\dxbd,1*\dya+2*\dyb+2*\dyc+1*\dyd+2*1*\dyba+2*2*\dybc+2*1*\dybd) {};
\node [cll] (v2221) at (2*\dxa+2*\dxb+2*\dxc+1*\dxd+2*2*\dxba+2*2*\dxbc+2*1*\dxbd,2*\dya+2*\dyb+2*\dyc+1*\dyd+2*2*\dyba+2*2*\dybc+2*1*\dybd) {};

\node [cll] (v0002) at (0*\dxa+0*\dxb+0*\dxc+2*\dxd+0*0*\dxba+0*0*\dxbc+0*2*\dxbd,0*\dya+0*\dyb+0*\dyc+2*\dyd+0*0*\dyba+0*0*\dybc+0*2*\dybd) {};
\node [cll] (v1002) at (1*\dxa+0*\dxb+0*\dxc+2*\dxd+0*1*\dxba+0*0*\dxbc+0*2*\dxbd,1*\dya+0*\dyb+0*\dyc+2*\dyd+0*1*\dyba+0*0*\dybc+0*2*\dybd) {};
\node [cll] (v2002) at (2*\dxa+0*\dxb+0*\dxc+2*\dxd+0*2*\dxba+0*0*\dxbc+0*2*\dxbd,2*\dya+0*\dyb+0*\dyc+2*\dyd+0*2*\dyba+0*0*\dybc+0*2*\dybd) {};
\node [cll] (v0012) at (0*\dxa+0*\dxb+1*\dxc+2*\dxd+0*0*\dxba+0*1*\dxbc+0*2*\dxbd,0*\dya+0*\dyb+1*\dyc+2*\dyd+0*0*\dyba+0*1*\dybc+0*2*\dybd) {};
\node [cll] (v1012) at (1*\dxa+0*\dxb+1*\dxc+2*\dxd+0*1*\dxba+0*1*\dxbc+0*2*\dxbd,1*\dya+0*\dyb+1*\dyc+2*\dyd+0*1*\dyba+0*1*\dybc+0*2*\dybd) {};
\node [cll] (v2012) at (2*\dxa+0*\dxb+1*\dxc+2*\dxd+0*2*\dxba+0*1*\dxbc+0*2*\dxbd,2*\dya+0*\dyb+1*\dyc+2*\dyd+0*2*\dyba+0*1*\dybc+0*2*\dybd) {};
\node [cll] (v0022) at (0*\dxa+0*\dxb+2*\dxc+2*\dxd+0*0*\dxba+0*2*\dxbc+0*2*\dxbd,0*\dya+0*\dyb+2*\dyc+2*\dyd+0*0*\dyba+0*2*\dybc+0*2*\dybd) {};
\node [cll] (v1022) at (1*\dxa+0*\dxb+2*\dxc+2*\dxd+0*1*\dxba+0*2*\dxbc+0*2*\dxbd,1*\dya+0*\dyb+2*\dyc+2*\dyd+0*1*\dyba+0*2*\dybc+0*2*\dybd) {};
\node [cll] (v2022) at (2*\dxa+0*\dxb+2*\dxc+2*\dxd+0*2*\dxba+0*2*\dxbc+0*2*\dxbd,2*\dya+0*\dyb+2*\dyc+2*\dyd+0*2*\dyba+0*2*\dybc+0*2*\dybd) {};
\node [cll] (v0102) at (0*\dxa+1*\dxb+0*\dxc+2*\dxd+1*0*\dxba+1*0*\dxbc+1*2*\dxbd,0*\dya+1*\dyb+0*\dyc+2*\dyd+1*0*\dyba+1*0*\dybc+1*2*\dybd) {};
\node [cll] (v1102) at (1*\dxa+1*\dxb+0*\dxc+2*\dxd+1*1*\dxba+1*0*\dxbc+1*2*\dxbd,1*\dya+1*\dyb+0*\dyc+2*\dyd+1*1*\dyba+1*0*\dybc+1*2*\dybd) {};
\node [cll] (v2102) at (2*\dxa+1*\dxb+0*\dxc+2*\dxd+1*2*\dxba+1*0*\dxbc+1*2*\dxbd,2*\dya+1*\dyb+0*\dyc+2*\dyd+1*2*\dyba+1*0*\dybc+1*2*\dybd) {};
\node [cll] (v0112) at (0*\dxa+1*\dxb+1*\dxc+2*\dxd+1*0*\dxba+1*1*\dxbc+1*2*\dxbd,0*\dya+1*\dyb+1*\dyc+2*\dyd+1*0*\dyba+1*1*\dybc+1*2*\dybd) {};
\node [cll] (v1112) at (1*\dxa+1*\dxb+1*\dxc+2*\dxd+1*1*\dxba+1*1*\dxbc+1*2*\dxbd,1*\dya+1*\dyb+1*\dyc+2*\dyd+1*1*\dyba+1*1*\dybc+1*2*\dybd) {};
\node [cll] (v2112) at (2*\dxa+1*\dxb+1*\dxc+2*\dxd+1*2*\dxba+1*1*\dxbc+1*2*\dxbd,2*\dya+1*\dyb+1*\dyc+2*\dyd+1*2*\dyba+1*1*\dybc+1*2*\dybd) {};
\node [cll] (v0122) at (0*\dxa+1*\dxb+2*\dxc+2*\dxd+1*0*\dxba+1*2*\dxbc+1*2*\dxbd,0*\dya+1*\dyb+2*\dyc+2*\dyd+1*0*\dyba+1*2*\dybc+1*2*\dybd) {};
\node [cll] (v1122) at (1*\dxa+1*\dxb+2*\dxc+2*\dxd+1*1*\dxba+1*2*\dxbc+1*2*\dxbd,1*\dya+1*\dyb+2*\dyc+2*\dyd+1*1*\dyba+1*2*\dybc+1*2*\dybd) {};
\node [cll] (v2122) at (2*\dxa+1*\dxb+2*\dxc+2*\dxd+1*2*\dxba+1*2*\dxbc+1*2*\dxbd,2*\dya+1*\dyb+2*\dyc+2*\dyd+1*2*\dyba+1*2*\dybc+1*2*\dybd) {};
\node [cll] (v0202) at (0*\dxa+2*\dxb+0*\dxc+2*\dxd+2*0*\dxba+2*0*\dxbc+2*2*\dxbd,0*\dya+2*\dyb+0*\dyc+2*\dyd+2*0*\dyba+2*0*\dybc+2*2*\dybd) {};
\node [cll] (v1202) at (1*\dxa+2*\dxb+0*\dxc+2*\dxd+2*1*\dxba+2*0*\dxbc+2*2*\dxbd,1*\dya+2*\dyb+0*\dyc+2*\dyd+2*1*\dyba+2*0*\dybc+2*2*\dybd) {};
\node [cll] (v2202) at (2*\dxa+2*\dxb+0*\dxc+2*\dxd+2*2*\dxba+2*0*\dxbc+2*2*\dxbd,2*\dya+2*\dyb+0*\dyc+2*\dyd+2*2*\dyba+2*0*\dybc+2*2*\dybd) {};
\node [cll] (v0212) at (0*\dxa+2*\dxb+1*\dxc+2*\dxd+2*0*\dxba+2*1*\dxbc+2*2*\dxbd,0*\dya+2*\dyb+1*\dyc+2*\dyd+2*0*\dyba+2*1*\dybc+2*2*\dybd) {};
\node [cll] (v1212) at (1*\dxa+2*\dxb+1*\dxc+2*\dxd+2*1*\dxba+2*1*\dxbc+2*2*\dxbd,1*\dya+2*\dyb+1*\dyc+2*\dyd+2*1*\dyba+2*1*\dybc+2*2*\dybd) {};
\node [cll] (v2212) at (2*\dxa+2*\dxb+1*\dxc+2*\dxd+2*2*\dxba+2*1*\dxbc+2*2*\dxbd,2*\dya+2*\dyb+1*\dyc+2*\dyd+2*2*\dyba+2*1*\dybc+2*2*\dybd) {};
\node [cll] (v0222) at (0*\dxa+2*\dxb+2*\dxc+2*\dxd+2*0*\dxba+2*2*\dxbc+2*2*\dxbd,0*\dya+2*\dyb+2*\dyc+2*\dyd+2*0*\dyba+2*2*\dybc+2*2*\dybd) {};
\node [cll] (v1222) at (1*\dxa+2*\dxb+2*\dxc+2*\dxd+2*1*\dxba+2*2*\dxbc+2*2*\dxbd,1*\dya+2*\dyb+2*\dyc+2*\dyd+2*1*\dyba+2*2*\dybc+2*2*\dybd) {};
\node [cll] (v2222) at (2*\dxa+2*\dxb+2*\dxc+2*\dxd+2*2*\dxba+2*2*\dxbc+2*2*\dxbd,2*\dya+2*\dyb+2*\dyc+2*\dyd+2*2*\dyba+2*2*\dybc+2*2*\dybd) {};

 \draw [wtl] (v0202)--(v0212)--(v0222); \draw [thn] (v0202)--(v0212)--(v0222);
 \draw [wtl] (v1202)--(v1212)--(v1222); \draw [thn] (v1202)--(v1212)--(v1222);
 \draw [wtl] (v2202)--(v2212)--(v2222); \draw [thn] (v2202)--(v2212)--(v2222);
\draw [wtl] (v0202)--(v1202)--(v2202); \draw [thn] (v0202)--(v1202)--(v2202);
\draw [wtl] (v0212)--(v1212)--(v2212); \draw [thn] (v0212)--(v1212)--(v2212);
\draw [wtl] (v0222)--(v1222)--(v2222); \draw [thk] (v0222)--(v1222)--(v2222);

\draw [thk] (v0201)--(v0202);
\draw [thk] (v0211)--(v0212);
\draw [thk] (v1211)--(v1212);

 \draw [wtl] (v0201)--(v0211)--(v0221); \draw [thn] (v0201)--(v0211)--(v0221);
 \draw [wtl] (v1201)--(v1211)--(v1221); \draw [thn] (v1201)--(v1211)--(v1221);
 \draw [wtl] (v2201)--(v2211)--(v2221); \draw [thk] (v2201)--(v2211)--(v2221);
\draw [wtl] (v0201)--(v1201)--(v2201); \draw [thn] (v0201)--(v1201)--(v2201);
\draw [wtl] (v0211)--(v1211)--(v2211); \draw [thn] (v0211)--(v1211)--(v2211);
\draw [wtl] (v0221)--(v1221)--(v2221); \draw [thn] (v0221)--(v1221)--(v2221);

\draw [thk] (v0200)--(v0201);
\draw [thk] (v0210)--(v0211);
\draw [thk] (v1210)--(v1211);

 \draw [wtl] (v0200)--(v0210)--(v0220); \draw [thn] (v0200)--(v0210)--(v0220);
 \draw [wtl] (v1200)--(v1210)--(v1220); \draw [thn] (v1200)--(v1210)--(v1220);
 \draw [wtl] (v2200)--(v2210)--(v2220); \draw [thn] (v2200)--(v2210)--(v2220);
\draw [wtl] (v0200)--(v1200)--(v2200); \draw [thn] (v0200)--(v1200)--(v2200);
\draw [wtl] (v0210)--(v1210)--(v2210); \draw [thn] (v0210)--(v1210)--(v2210);
\draw [wtl] (v0220)--(v1220)--(v2220); \draw [thk] (v0220)--(v1220)--(v2220);

 \draw [wtl] (v0102)--(v0112)--(v0122); \draw [thn] (v0102)--(v0112)--(v0122);
 \draw [wtl] (v1102)--(v1112)--(v1122); \draw [thk] (v1102)--(v1112)--(v1122);
 \draw [wtl] (v2102)--(v2112)--(v2122); \draw [thn] (v2102)--(v2112)--(v2122);
\draw [wtl] (v0102)--(v1102)--(v2102); \draw [thn] (v0102)--(v1102)--(v2102);
\draw [wtl] (v0112)--(v1112)--(v2112); \draw [thn] (v0112)--(v1112)--(v2112);
\draw [wtl] (v0122)--(v1122)--(v2122); \draw [thn] (v0122)--(v1122)--(v2122);

\draw [thk] (v0121)--(v0122);
\draw [thk] (v2121)--(v2122);
\draw [thk] (v0101)--(v0102);

 \draw [wtl] (v0101)--(v0111)--(v0121); \draw [thn] (v0101)--(v0111)--(v0121);
 \draw [wtl] (v1101)--(v1111)--(v1121); \draw [thn] (v1101)--(v1111)--(v1121);
 \draw [wtl] (v2101)--(v2111)--(v2121); \draw [thn] (v2101)--(v2111)--(v2121);
\draw [wtl] (v0101)--(v1101)--(v2101); \draw [thn] (v0101)--(v1101)--(v2101);
\draw [wtl] (v0111)--(v1111)--(v2111); \draw [thk] (v0111)--(v1111)--(v2111);
\draw [wtl] (v0121)--(v1121)--(v2121); \draw [thn] (v0121)--(v1121)--(v2121);

\draw [thk] (v0120)--(v0121);
\draw [thk] (v2120)--(v2121);
\draw [thk] (v0100)--(v0101);

 \draw [wtl] (v0100)--(v0110)--(v0120); \draw [thn] (v0100)--(v0110)--(v0120);
 \draw [wtl] (v1100)--(v1110)--(v1120); \draw [thn] (v1100)--(v1110)--(v1120);
 \draw [wtl] (v2100)--(v2110)--(v2120); \draw [thn] (v2100)--(v2110)--(v2120);
\draw [wtl] (v0100)--(v1100)--(v2100); \draw [thn] (v0100)--(v1100)--(v2100);
\draw [wtl] (v0110)--(v1110)--(v2110); \draw [thk] (v0110)--(v1110)--(v2110);
\draw [wtl] (v0120)--(v1120)--(v2120); \draw [thn] (v0120)--(v1120)--(v2120);

 \draw [wtl] (v0000)--(v0010)--(v0020); \draw [thn] (v0000)--(v0010)--(v0020);
 \draw [wtl] (v1000)--(v1010)--(v1020); \draw [thn] (v1000)--(v1010)--(v1020);
 \draw [wtl] (v2000)--(v2010)--(v2020); \draw [thn] (v2000)--(v2010)--(v2020);
\draw [wtl] (v0000)--(v1000)--(v2000); \draw [thn] (v0000)--(v1000)--(v2000);
\draw [wtl] (v0010)--(v1010)--(v2010); \draw [thk] (v0010)--(v1010)--(v2010);
\draw [wtl] (v0020)--(v1020)--(v2020); \draw [thk] (v0020)--(v1020)--(v2020);

 \draw [wtl] (v0001)--(v0011)--(v0021); \draw [thk] (v0001)--(v0011)--(v0021);
 \draw [wtl] (v1001)--(v1011)--(v1021); \draw [thn] (v1001)--(v1011)--(v1021);
 \draw [wtl] (v2001)--(v2011)--(v2021); \draw [thk] (v2001)--(v2011)--(v2021);
\draw [wtl] (v0001)--(v1001)--(v2001); \draw [thn] (v0001)--(v1001)--(v2001);
\draw [wtl] (v0011)--(v1011)--(v2011); \draw [thn] (v0011)--(v1011)--(v2011);
\draw [wtl] (v0021)--(v1021)--(v2021); \draw [thn] (v0021)--(v1021)--(v2021);

 \draw [wtl] (v0002)--(v0012)--(v0022); \draw [thk] (v0002)--(v0012)--(v0022);
 \draw [wtl] (v1002)--(v1012)--(v1022); \draw [thk] (v1002)--(v1012)--(v1022);
 \draw [wtl] (v2002)--(v2012)--(v2022); \draw [thn] (v2002)--(v2012)--(v2022);
\draw [wtl] (v0002)--(v1002)--(v2002); \draw [thn] (v0002)--(v1002)--(v2002);
\draw [wtl] (v0012)--(v1012)--(v2012); \draw [thn] (v0012)--(v1012)--(v2012);
\draw [wtl] (v0022)--(v1022)--(v2022); \draw [thn] (v0022)--(v1022)--(v2022);

\draw [wtl] (v0000)--(v0100)--(v0200); \draw [thn] (v0000)--(v0100)--(v0200);
\draw [wtl] (v1000)--(v1100)--(v1200); \draw [thk] (v1000)--(v1100)--(v1200);
\draw [wtl] (v2000)--(v2100)--(v2200); \draw [thk] (v2000)--(v2100)--(v2200);
\draw [wtl] (v0010)--(v0110)--(v0210); \draw [thn] (v0010)--(v0110)--(v0210);
\draw [wtl] (v1010)--(v1110)--(v1210); \draw [thn] (v1010)--(v1110)--(v1210);
\draw [wtl] (v2010)--(v2110)--(v2210); \draw [thn] (v2010)--(v2110)--(v2210);
\draw [wtl] (v0020)--(v0120)--(v0220); \draw [thn] (v0020)--(v0120)--(v0220);
\draw [wtl] (v1020)--(v1120)--(v1220); \draw [thn] (v1020)--(v1120)--(v1220);
\draw [wtl] (v2020)--(v2120)--(v2220); \draw [thn] (v2020)--(v2120)--(v2220);

\draw [wtl] (v0001)--(v0101)--(v0201); \draw [thn] (v0001)--(v0101)--(v0201);
\draw [wtl] (v1001)--(v1101)--(v1201); \draw [thk] (v1001)--(v1101)--(v1201);
\draw [wtl] (v2001)--(v2101)--(v2201); \draw [thn] (v2001)--(v2101)--(v2201);
\draw [wtl] (v0011)--(v0111)--(v0211); \draw [thn] (v0011)--(v0111)--(v0211);
\draw [wtl] (v1011)--(v1111)--(v1211); \draw [thn] (v1011)--(v1111)--(v1211);
\draw [wtl] (v2011)--(v2111)--(v2211); \draw [thn] (v2011)--(v2111)--(v2211);
\draw [wtl] (v0021)--(v0121)--(v0221); \draw [thn] (v0021)--(v0121)--(v0221);
\draw [wtl] (v1021)--(v1121)--(v1221); \draw [thk] (v1021)--(v1121)--(v1221);
\draw [wtl] (v2021)--(v2121)--(v2221); \draw [thn] (v2021)--(v2121)--(v2221);

\draw [wtl] (v0002)--(v0102)--(v0202); \draw [thn] (v0002)--(v0102)--(v0202);
\draw [wtl] (v1002)--(v1102)--(v1202); \draw [thn] (v1002)--(v1102)--(v1202);
\draw [wtl] (v2002)--(v2102)--(v2202); \draw [thk] (v2002)--(v2102)--(v2202);
\draw [wtl] (v0012)--(v0112)--(v0212); \draw [thn] (v0012)--(v0112)--(v0212);
\draw [wtl] (v1012)--(v1112)--(v1212); \draw [thn] (v1012)--(v1112)--(v1212);
\draw [wtl] (v2012)--(v2112)--(v2212); \draw [thk] (v2012)--(v2112)--(v2212);
\draw [wtl] (v0022)--(v0122)--(v0222); \draw [thn] (v0022)--(v0122)--(v0222);
\draw [wtl] (v1022)--(v1122)--(v1222); \draw [thn] (v1022)--(v1122)--(v1222);
\draw [wtl] (v2022)--(v2122)--(v2222); \draw [thn] (v2022)--(v2122)--(v2222);

\draw [fill=black,draw=white] (v0000) circle(1.3mm);
\draw [fill=white] (v1000) circle(0.8mm);
\draw [fill=white] (v2000) circle(0.8mm);
\draw [fill=white] (v0010) circle(0.8mm);
\draw [fill=white] (v1010) circle(0.8mm);
\draw [fill=white] (v2010) circle(0.8mm);
\draw [fill=white] (v0020) circle(0.8mm);
\draw [fill=white] (v1020) circle(0.8mm);
\draw [fill=white] (v2020) circle(0.8mm);
\draw [fill=white] (v0100) circle(0.8mm);
\draw [fill=white] (v1100) circle(0.8mm);
\draw [fill=white] (v2100) circle(0.8mm);
\draw [fill=white] (v0110) circle(0.8mm);
\draw [fill=white] (v1110) circle(0.8mm);
\draw [fill=white] (v2110) circle(0.8mm);
\draw [fill=white] (v0120) circle(0.8mm);
\draw [fill=black,draw=white] (v1120) circle(1.3mm);
\draw [fill=white] (v2120) circle(0.8mm);
\draw [fill=white] (v0200) circle(0.8mm);
\draw [fill=white] (v1200) circle(0.8mm);
\draw [fill=white] (v2200) circle(0.8mm);
\draw [fill=white] (v0210) circle(0.8mm);
\draw [fill=white] (v1210) circle(0.8mm);
\draw [fill=black,draw=white] (v2210) circle(1.3mm);
\draw [fill=white] (v0220) circle(0.8mm);
\draw [fill=white] (v1220) circle(0.8mm);
\draw [fill=white] (v2220) circle(0.8mm);

\draw [fill=white] (v0001) circle(0.8mm);
\draw [fill=white] (v1001) circle(0.8mm);
\draw [fill=white] (v2001) circle(0.8mm);
\draw [fill=white] (v0011) circle(0.8mm);
\draw [fill=black,draw=white] (v1011) circle(1.3mm);
\draw [fill=white] (v2011) circle(0.8mm);
\draw [fill=white] (v0021) circle(0.8mm);
\draw [fill=white] (v1021) circle(0.8mm);
\draw [fill=white] (v2021) circle(0.8mm);
\draw [fill=white] (v0101) circle(0.8mm);
\draw [fill=white] (v1101) circle(0.8mm);
\draw [fill=black,draw=white] (v2101) circle(1.3mm);
\draw [fill=white] (v0111) circle(0.8mm);
\draw [fill=white] (v1111) circle(0.8mm);
\draw [fill=white] (v2111) circle(0.8mm);
\draw [fill=white] (v0121) circle(0.8mm);
\draw [fill=white] (v1121) circle(0.8mm);
\draw [fill=white] (v2121) circle(0.8mm);
\draw [fill=white] (v0201) circle(0.8mm);
\draw [fill=white] (v1201) circle(0.8mm);
\draw [fill=white] (v2201) circle(0.8mm);
\draw [fill=white] (v0211) circle(0.8mm);
\draw [fill=white] (v1211) circle(0.8mm);
\draw [fill=white] (v2211) circle(0.8mm);
\draw [fill=black,draw=white] (v0221) circle(1.3mm);
\draw [fill=white] (v1221) circle(0.8mm);
\draw [fill=white] (v2221) circle(0.8mm);

\draw [fill=white] (v0002) circle(0.8mm);
\draw [fill=white] (v1002) circle(0.8mm);
\draw [fill=white] (v2002) circle(0.8mm);
\draw [fill=white] (v0012) circle(0.8mm);
\draw [fill=white] (v1012) circle(0.8mm);
\draw [fill=white] (v2012) circle(0.8mm);
\draw [fill=white] (v0022) circle(0.8mm);
\draw [fill=white] (v1022) circle(0.8mm);
\draw [fill=black,draw=white] (v2022) circle(1.3mm);
\draw [fill=white] (v0102) circle(0.8mm);
\draw [fill=white] (v1102) circle(0.8mm);
\draw [fill=white] (v2102) circle(0.8mm);
\draw [fill=black,draw=white] (v0112) circle(1.3mm);
\draw [fill=white] (v1112) circle(0.8mm);
\draw [fill=white] (v2112) circle(0.8mm);
\draw [fill=white] (v0122) circle(0.8mm);
\draw [fill=white] (v1122) circle(0.8mm);
\draw [fill=white] (v2122) circle(0.8mm);
\draw [fill=white] (v0202) circle(0.8mm);
\draw [fill=black,draw=white] (v1202) circle(1.3mm);
\draw [fill=white] (v2202) circle(0.8mm);
\draw [fill=white] (v0212) circle(0.8mm);
\draw [fill=white] (v1212) circle(0.8mm);
\draw [fill=white] (v2212) circle(0.8mm);
\draw [fill=white] (v0222) circle(0.8mm);
\draw [fill=white] (v1222) circle(0.8mm);
\draw [fill=white] (v2222) circle(0.8mm);
\end{tikzpicture}
$$

Therefore, Theorem~\ref{splitconstr} is applicable for $q=2$ or $q=3$ and gives us the following perfect colorings.
 
 \begin{corollary}
\item \begin{enumerate}
\item  For each $p \in \mathbb{N}$ and $\tau = 0, \ldots, p-1$ there exists a perfect $(3p-3 \tau,p+ 3 \tau)$-coloring of $H(3,2p)$ (with the second eigenvalue $\theta = 2p  - 3$).
\item   For each $p \in \mathbb{N}$ and $ \tau = 0, \ldots, p-1$ there exists a perfect $(8p-8 \tau , p + 8 \tau )$-coloring of $H(4,3p)$ 
(with the second eigenvalue $\theta = 3p - 4 $).
 \end{enumerate}
 \end{corollary}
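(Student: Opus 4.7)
The plan is to obtain both statements as direct applications of Theorem~\ref{splitconstr}, the first with $q=2$ and the second with $q=3$. In each case what has to be checked are the two hypotheses: the existence of a $(q^2-1,1)$-coloring of $H(q+1,q)$ corresponding to a $1$-perfect code, and the partitionability of its non-code color class into lines. With those in hand, the parameters in the corollary will then be read off by substituting $q$ into the expression $((q^2-1)(p-t),(q^2-1)t+p)$ from the theorem, and the main eigenvalue will follow from the general formula $\lambda=n(q-1)-(b+c)$ for a $(b,c)$-coloring of $H(n,q)$.

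For part (1), I would take the binary repetition code $\{000,111\}$ as the $1$-perfect code in $H(3,2)$; it gives the desired $(3,1)$-coloring of $H(3,2)$. The non-code color class consists of the six vertices of weight $1$ and $2$, which in $H(3,2)$ is not independent (for instance $100$ and $110$ are adjacent), so Proposition~\ref{dcmpboolclr} guarantees a partition of this class into lines. Theorem~\ref{splitconstr} with $q=2$ then produces, for every $p$ and every $t\in\{0,\ldots,p-1\}$, a $(3(p-t),\,p+3t)$-coloring of $H(3,2p)$, which is exactly the claimed parameter set.

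For part (2), the same scheme is carried out with $q=3$: the ternary Hamming code is a $1$-perfect code in $H(4,3)$ and yields the $(8,1)$-coloring, and the explicit picture given in the preceding text exhibits a partition of the $80$-vertex non-code class into lines, so the hypotheses of Theorem~\ref{splitconstr} are fulfilled. Applying the theorem gives, for all admissible $t$, an $(8(p-t),\,p+8t)$-coloring of $H(4,3p)$. Finally, the main eigenvalues are immediate: in case (1), $b+c=4p$ and $n(q-1)=3(2p-1)$, so $\lambda=2p-3$; in case (2), $b+c=9p$ and $n(q-1)=4(3p-1)$, so $\lambda=3p-4$. There is no real obstacle in this argument since all the combinatorial content — in particular, the explicit partition of the non-code class for $q=3$ — has already been supplied in the paragraph preceding the corollary; the proof is essentially a bookkeeping substitution into the general construction.
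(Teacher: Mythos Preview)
Your proposal is correct and follows exactly the same approach as the paper: the corollary is stated immediately after the paragraph verifying (via Proposition~\ref{dcmpboolclr} for $q=2$ and via the explicit partition picture for $q=3$) that the hypotheses of Theorem~\ref{splitconstr} hold, and the parameters are then read off by substitution. The only cosmetic difference is that the paper quotes the eigenvalue $q(p-1)-1$ directly from Theorem~\ref{splitconstr}, whereas you recompute it from $\lambda=n(q-1)-(b+c)$; the results agree.
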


For cases $q \geq 4$ we need to answer the following question.

\begin{question}
Given $q \geq 4$, does there exist a decomposition into $1$-dimensional faces of the complement of a $1$-perfect code in $H(q+1,q)$?
\end{question}

\subsection{All constructions}

In the following table we summarize all general constructions considered in the present paper. 
\begin{center}
\begin{tabular}{|p{1.39cm}|p{1.25cm}|p{1.07cm}|p{4.48cm}|p{2.3cm}|p{2.2cm}|p{2cm}|}\cline{1-7}
input $(b, c)$ & input $(n, q)$ & Th. & necessary conditions & output ($b$, $c$) & output ($n$, $q$) & output $\theta$\\ \hhline{|=|=|=|=|=|=|=|}
 --- &  --- &  $\tau$-fold perfect code &  $q=p^s$ is a prime power; $n=(q^m-1)/(q-1)$& $(n(q-1)- \tau +1,\tau)$ & $(n,q)$ & $-1$\\\cline{1-7}
 --- &  --- &  $\tau$-fold MDS code &  none & $(n(q-\tau), \tau n)$ & $(n,q)$ & $-n$\\\cline{1-7}
 --- &  --- & \ref{th:additive}  &  $q=p^s$ is a prime power; $\exists$ $c$-fold spread from $s$-dimensional subspaces of $\mathbb Z^m_p$
 & $(c(p^m-1), c)$ & $\displaystyle{\left(\frac{c(p^m-1)}{q-1},q\right)}$ & 
 $-c$ \\\cline{1-7}
  &   &  \ref{diagconstr} &  $ u \in \mathbb{N}$ & $(b,c)$ & $(n+ u ,q)$ & $\theta+ u (q-1)$\\\cline{3-7}
  &   &  \ref{timestconstr} &  $ v \in \mathbb{N}$ & $( v b, v c)$ & $(n v ,q)$ & $ v \theta$ \\\cline{3-7}
  &   &  \ref{altimesconstr} &  $p \in \mathbb{N}$  & $(pb,pc)$ & $(n,pq)$ & $p\theta+n(p-1)$\\\cline{3-7}
  &   &  \ref{Flaassstand} &   $\exists$ $1$-perfect code in $H(q+1,q)$; $\theta \le 0$; $\tau_1,\tau_2=0,\ldots,q$, $\tau_1+\tau_2\ne 0,2q$ & $(q(b+c)-(c\tau_1+b\tau_2), c\tau_1+b\tau_2)$ & $(qn-\theta,q)$ & $\theta$\\\cline{3-7}
$(b,c)$  & $(n,q)$  &  \ref{Flaassimprov}(1) & $\exists$ $1$-perfect code in $H(q+1,q)$; first color can be partitioned into $k$-faces;  $\theta +k \le 0$;  $ \tau =1,\ldots,q$ & $(q(b+c)- \tau c, \tau c)$ & $(qn-\theta-k,q)$ & $\theta-k(q-1)$\\\cline{3-7}
  &   &  \ref{Flaassimprov}(2) & $\exists$ $1$-perfect code in $H(q+1,q)$;  first color can be partitioned into $k$-faces; $\theta \le k(q-1)$; $ \tau =1,\ldots,q$ & $(q(b+c)- \tau c, \tau c)$ & $(qn-\theta+k(q-1),q)$ & $\theta+k(q-1)^2$\\\cline{1-7}
$(q^2-1,1)$  &  $(q+1,q)$ &  \ref{splitconstr} &  second color can be partitioned into $1$-faces;
$p \in \mathbb{N}$, $0 \leq \tau \leq p-1$ & $((q^2-1)(p- \tau ), (q^2-1) \tau +p)$ & $(q+1,pq)$ & $q(p-1)-1$\\\cline{1-7}
\end{tabular}
\end{center}


\section{Admissible parameters of perfect $2$-colorings of $H(n,q)$} \label{paramsec}

We first mention a well-known corollary from Theorem~\ref{diagconstr}, which implies the following alternative for the existence of perfect $(b,c)$-colorings in Hamming graphs for given $a$ and $b$.

\begin{corollary}
Given  $q$, $b$ and $c$,
either  there are 
no $(b,c)$-colorings 
of $H(n,q)$ 
for all $n \in \mathbb{N}$,
or there is 
$n_0 = n_0 (b,c,q)$ such that $(b,c)$-colorings of $H(n,q)$ exist if and only if $n \ge n_0$.
\end{corollary}

We call parameters $q$, 
$b$ and $c$ 
satisfying the second
alternative 
\emph{admissible}, 
and we will say 
that $n_0 = n_0(b,c;q)$ 
is the \emph{threshold}. 

On the base of above 
theorems and some
computation results, 
we put forward 
the following conjecture.

\begin{conjecture} 
\label{charconj}
Parameters $q$, and $b \geq c\ne 1$
are admissible 
if and only if 
$\frac{b+ c}{\gcd(b,c)}$ divides $q^k$ 
for some $k \in N$.
\end{conjecture}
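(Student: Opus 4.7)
The necessity direction of Conjecture~\ref{charconj} is essentially immediate from Theorem~\ref{corbound}: if a $(b,c)$-coloring of $H(n,q)$ exists, then $b' + c'$ divides $q^{n - (b+c)/q + 1}$, so $\tfrac{b+c}{\gcd(b,c)}$ divides some power of $q$. Thus my plan concentrates on sufficiency: showing that for every pair $(b,c)$ with $b \geq c \geq 2$ and $b' + c' \mid q^k$, there is some $n$ admitting a perfect $(b,c)$-coloring of $H(n,q)$.

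The strategy is an induction on $k$, combined with careful use of the constructions of Section~\ref{consrsec}. For the base of the induction, the relevant ratios arise from classical objects: distance-$2$ MDS codes and multifold MDS codes yield $(n(q-t), nt)$-colorings via Theorem~\ref{timestconstr} applied to the trivial $(q-t,t)$-coloring of $H(1,q)$, realizing all admissible ratios with $b' + c' = q$. For $b' + c' = q_0$ a proper divisor of $q$ (relevant when $q$ is composite), one would first realize a coloring with that reduced sum over a smaller alphabet $q_0$, and then inflate to alphabet $q$ using Theorem~\ref{altimesconstr}.

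The inductive step is the heart of the argument. Given $(b,c)$ with $b' + c'$ dividing $q^k$ but not $q^{k-1}$, the plan is to exhibit $(b,c)$ as an output of Splitting Construction~I (Theorem~\ref{Flaassstand}), whose output parameters $(q(b_0+c_0) - (c_0 t_1 + b_0 t_2),\, c_0 t_1 + b_0 t_2)$ multiply the relevant sum by a factor of $q$, effectively incrementing $k$ by one. The input $(b_0, c_0)$ then has strictly smaller $b'_0 + c'_0$ and, by the induction hypothesis, is admissible. The nontrivial bookkeeping is to show that for any target reduced pair $(b', c')$ with $b' + c' = q^k$, one can choose $(b_0, c_0)$, $t_1$, $t_2$ such that the output reduces exactly to $(b,c)$ and simultaneously satisfies the hypothesis $\lambda \leq 0$ required by Theorem~\ref{Flaassstand}; the latter can in principle be ensured by first padding the input coloring via Theorem~\ref{diagconstr} until $n$ is large enough to force a non-positive main eigenvalue.

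The main obstacle, and presumably the reason the statement is only a conjecture, is twofold. First, Theorems~\ref{Flaassstand} and~\ref{splitconstr} rely on the existence of a $1$-perfect code in $H(q+1,q)$ with additional structural properties, which is unconditionally available only when $q$ is a prime power; for composite $q$ this is tied to a well-known open problem. Second, even granting such codes, the combinatorial verification that every admissible pair $(b,c)$ is genuinely achievable as an output of the splitting constructions appears to require a careful case analysis when $\tfrac{b+c}{\gcd(b,c)}$ has several distinct prime factors and is not itself a power of $q$. Once the inductive construction succeeds in some dimension, Theorem~\ref{diagconstr} furnishes $(b,c)$-colorings for all larger $n$, justifying the threshold formulation preceding the conjecture.
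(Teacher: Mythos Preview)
The statement is a \emph{conjecture}: the paper does not prove it. For necessity the paper simply invokes Theorem~\ref{corbound}, exactly as you do. For sufficiency the paper proves only partial results (Theorem~\ref{combconstr} when $b'+c'$ is an exact power of $q$, hence Theorem~\ref{primeq} for prime $q$, via multifold $1$-perfect codes rather than splitting), and the Appendix tables display several parameter sets that remain genuinely open even for the prime power $q=4$.

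Your proposed induction for sufficiency has two concrete gaps beyond the ones you flag.

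First, the claim that padding via Theorem~\ref{diagconstr} can force $\lambda \le 0$ is backwards: $\lambda = n(q-1) - (b+c)$ \emph{increases} with $n$, so enlarging $n$ while keeping $(b,c)$ fixed makes $\lambda$ more positive, not less. The hypothesis $\lambda\le 0$ of Theorem~\ref{Flaassstand} is met, if at all, only at the smallest dimensions, and padding cannot rescue it.

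Second, and more seriously, the inductive step can fail outright even for prime-power $q$ and a single prime factor in $b'+c'$. For $q=4$ and target $(21,3)$, Theorem~\ref{Flaassstand} would require an input with $b_0+c_0=(b+c)/q=6$; but $6$ is not a multiple of $4$, so no $(b_0,c_0)$-coloring of any $H(n,4)$ with that sum exists at all. For $q=4$ and target $(17,15)$ one needs $b_0+c_0=8$; the only inputs realizable with $\lambda\le 0$ are $(6,2)$ and $(4,4)$ in $H(2,4)$, for which $c_0 t_1 + b_0 t_2$ is always even and can never equal $15$. These are exactly among the ``???'' entries in the $q=4$ table: the obstruction is not missing bookkeeping but a genuine gap in the available constructions, which is why the statement remains a conjecture in the paper.
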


Necessity of this
condition 
is established 
in Theorem~\ref{corbound}
(by 
Theorem~\ref{c1cond},
it is not sufficient 
if $c=1$).

Combining 
the obtained constructions 
of perfect $2$-colorings,
we confirm 
Conjecture~\ref{charconj}
for some cases 
and provide some bounds 
on the threshold  $n_0$.
We start with some
sufficient conditions 
on admissible parameters 
$(b,c)$ when $q$ 
is a prime power.

Recall that we use notation $b' = \frac{b}{\gcd(b,c)}$ and $c' = \frac{c}{\gcd(b,c)}$.

\begin{theorem} 
\label{combconstr}
Let $q = p^s$ 
be a power of prime $p$. 
If for given $b$ and $c$ we have $b' + c' = q^k$  
for some  
$k \in \mathbb{N}$, 
then the parameters $b$ and $c$
are admissible. 
Moreover, 
 the threshold
 $n_0=n_0(b,c;q)$ 
 satisfies the inequalities 
 $$
 \max \left\{\frac{b}{q-1},\frac{b+c}{q} + k -1\right\} \leq n_0 \leq \frac{b+c -\gcd(b,c)}{q-1}.
 $$
\end{theorem}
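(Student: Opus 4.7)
The plan is to prove the three inequalities separately: two lower bounds from known necessary conditions, and one upper bound by explicit construction. Throughout, set $g := \gcd(b,c)$, so that $b = g b'$, $c = g c'$, $b+c = g q^k$, and $\gcd(b',c') = 1$.

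For the two lower bounds I would appeal directly to earlier results. A first-color vertex has $b$ second-color neighbors out of at most $n(q-1)$ total neighbors, giving $n \geq b/(q-1)$. For the other bound, Theorem~\ref{corbound} forces $b'+c' = q^k$ to divide $q^{n-(b+c)/q+1}$, which (since $b'+c'$ is a pure power of $q$) is possible only if $k \leq n - (b+c)/q + 1$, i.e.\ $n \geq (b+c)/q + k - 1$.

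The heart of the proof is the upper bound. Since $q = p^s$ is a prime power, a linear $q$-ary Hamming code of length $n_1 := (q^k-1)/(q-1)$ exists in $H(n_1,q)$ and is a $1$-perfect code. Its $q^k$ cosets partition $\mathbb{Z}_q^{n_1}$, and because translations are graph automorphisms of $H(n_1,q)$, every coset is itself a $1$-perfect code. I would then take the union of any $c'$ of these $q^k$ pairwise disjoint perfect codes to obtain a $c'$-fold $1$-perfect code, which, as observed in Subsection~\ref{directconstrsec}, is precisely a $(q^k - c', c') = (b', c')$-coloring of $H(n_1, q)$. Applying Theorem~\ref{timestconstr} with multiplier $g$ yields a $(b, c)$-coloring of $H(g n_1, q)$, and a direct computation gives $g n_1 = g(q^k-1)/(q-1) = (b+c-g)/(q-1)$, establishing $n_0 \leq (b+c-g)/(q-1)$. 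The only substantial point is the existence of a partition of $\mathbb{Z}_q^{n_1}$ into $q^k$ disjoint $1$-perfect codes; this is exactly where the prime-power assumption enters, via linearity of the Hamming code, and it is the main obstacle one has to handle.
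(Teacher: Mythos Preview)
Your proposal is correct and follows essentially the same approach as the paper: both derive the two lower bounds from the degree constraint and from Theorem~\ref{corbound}, and both obtain the upper bound by taking a $c'$-fold $1$-perfect code in $H\!\big((q^k-1)/(q-1),q\big)$ and then applying Theorem~\ref{timestconstr} with multiplier $g=\gcd(b,c)$. Your explicit description of the $c'$-fold code as a union of cosets of the linear Hamming code is a welcome elaboration of what the paper simply cites from Subsection~\ref{directconstrsec}.
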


\begin{proof}
Note that the parameters 
$b$ and $c$ satisfy 
the necessary condition 
of Theorem~\ref{corbound}.
In addition, 
if $c = 1$ 
then  $b$ and $c$ 
are relatively prime 
and coincide with 
$b'$ and $c'$. 
So, $b = q^k -1$ and 
it is divisible by $q-1$ 
as is claimed in 
Theorem~\ref{c1cond}.

To prove sufficiency,
we need to construct
a $(b,c)$-coloring $f$
in $H(n,q)$ for some $n$.
Denote $m=\gcd(b,c)$,
$b'=b/m$, and $c'=c/m$.
We have $b' + c' = q^k$
and   $b + c = m q^k$ 
for some $k$. 
As it was stated 
in Subsection~\ref{perfcodes},
for $n' = \frac{q^k-1}{q-1}$ 
there is a $c'$-fold 
$1$-perfect code 
in $H(n',q)$,
corresponding 
to a $(b',c')$-coloring $f'$. 
With the help 
of Theorem~\ref{timestconstr},
we multiply 
the perfect coloring $f'$ 
by $m$ and obtain 
a $(b,c)$-coloring 
in the graph $H(n,q)$ with 
$$
n = mn' = m \frac{q^k-1}{q-1} 
 =  \frac{b+c-\gcd(b,c)}{q-1}. 
$$

Let us prove the lower bound 
on the threshold $n_0$. 
Since for every 
$(b,c)$-coloring 
in $H(n,q)$ 
the parameter $b$ 
is not greater than 
the degree $n(q-1)$ 
of the graph $H(n,q)$, 
we have 
$n_0 \geq \frac{b}{q-1}$.  
By Theorem~\ref{corbound}, 
if there exists a 
$(b,c)$-coloring 
in $H(n,q)$, 
then $q^k$ divides 
$q^{n - mq^{k-1} +1}$. 
Since $q$ is a prime power,
we get the inequality  
$n_0 \geq m q^{k-1} + k -1$.
\end{proof}

Additionally, the construction of perfect colorings from additive codes (Section~\ref{s:additive})
enables us to solve the existence of $(b,c)$-colorings in some cases when $q$ 
is a power of prime $p$ 
and $b'+c'$ is also a power of $p$ but not necessarily a power of $q$.

\begin{proposition}\label{pr:spread}
Assume that $s$ and $m$ are integers 
satisfying $1\le s \le m$, $p$ is a prime number,
$q=p^s$, and $l$ is a divisor of both $s$ and $m$.
If $c=\frac{q-1}{p^l-1}$,
 then, 
 $$
 n_0( v c(p^m-1), v c;q) = \frac{ v c(p^m-1)}{q-1}
 \quad\mbox{and}\quad
 n_0( v c(p^m-\tau), v \tau c;q) \le \frac{ v c(p^m-1)}{q-1}$$
 for any $\tau \in \{1, \ldots, p^m-1 \}$ and $ v >0$.
\end{proposition}

\begin{proof}
 By Theorem 4.2.7 \cite{Hirschfeld79} 
 on the existence of multifold spreads,
 the hypothesis of the proposition
 guarantees the existence of a collection of subspaces
 of $\mathbb{Z}_p^m$ satisfying p.(i) 
 in Theorem~\ref{th:additive}.
 By Theorem~\ref{th:additive}, there is
 an additive code corresponding to
 a $((p^m-1)c,c)$-coloring in 
 $H(\frac{(p^m-1)c}{q-1},q)$.
 This gives 
 $ n_0(c(p^m-1), c;q) = \frac{c(p^m-1)}{q-1}$
 (we cannot reduce the dimension because 
 the first element of the quotient
 matrix is $0$)
 and, by~Theorem~\ref{timestconstr}, 
 $ n_0( v  c(p^m-1), v c;q) = \frac{ v c(p^m-1)}{q-1}$.
 Unifying $\tau$ cosets of the additive code and 
 again applying~Theorem~\ref{timestconstr},
 we get a coloring supporting the inequality part of the claim.
\end{proof}

As a direct corollary of Theorem~\ref{combconstr},
we have that Conjecture~\ref{charconj} holds for the case of prime $q$.

\begin{theorem}\label{primeq}
Assume that $q$ is a prime number. 
\begin{enumerate}
\item The parameters $b,c$ and $q$ are admissible if and only if $b' + c' = q^k$ for some $k \in \mathbb{N}$.
\item If  $b' + c' = q^k$ for some $k \in \mathbb{N}$ then the threshold parameter $n_0$ for existence of $(b,c)$-colorings satisfies the inequalities 
 $$\max \left\{\frac{b}{q-1},\frac{b+c}{q} + k -1\right\} \leq n_0 \leq \frac{b+c -\gcd(b,c)}{q-1}.$$
\end{enumerate}
\end{theorem}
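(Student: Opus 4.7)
The plan is to derive both parts directly from the results already established in the excerpt, exploiting the fact that for a prime $q$ the positive divisors of $q^m$ are exactly the powers $q^0,q^1,\ldots,q^m$, so that the divisibility condition collapses to an equality.

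For the necessity direction of part~1, I would assume that a $(b,c)$-coloring of $H(n,q)$ exists for some $n$ and apply Theorem~\ref{corbound}, which gives that $b'+c'$ divides $q^{n-(b+c)/q+1}$. Since $q$ is prime, every divisor of a power of $q$ is itself a power of $q$; and since $b\ge c\ge 1$ forces $b'+c'\ge 2$, we conclude $b'+c'=q^k$ for some $k\ge 1$.

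For the sufficiency direction of part~1 together with both threshold bounds of part~2, I would simply invoke Theorem~\ref{combconstr} with $s=1$: every prime is a prime power, and the hypothesis $b'+c'=q^k$ is exactly the hypothesis of that theorem. It produces a $(b,c)$-coloring of $H(n,q)$ for $n=(b+c-\gcd(b,c))/(q-1)$, which gives the upper bound on $n_0$, and its proof also records the matching lower bound $n_0\ge\max\{b/(q-1),\,(b+c)/q+k-1\}$ derived from the graph degree and from Theorem~\ref{corbound}. Combined with Theorem~\ref{diagconstr}, which propagates existence from $n$ to $n+1$ and so justifies speaking of a threshold at all, this confirms admissibility and gives the claimed two-sided estimate.

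The main obstacle is in fact essentially nonexistent: all the substantial work---the multifold $1$-perfect-code construction combined with Theorem~\ref{timestconstr}, and the face-density divisibility bound of Theorem~\ref{corbound}---has already been done. Theorem~\ref{primeq} is the clean specialization to prime alphabets, where the divisor lattice of $q^m$ becomes a chain and the ``$b'+c'$ divides some power of $q$'' condition of Theorem~\ref{corbound} simplifies to ``$b'+c'$ equals a power of $q$''. The only genuine subtlety is verifying that no additional obstruction appears in the $c=1$ subcase, and this is automatic since for prime $q$ the condition $b+1=q^k$ of Proposition~\ref{c1cond} is exactly $b'+c'=q^k$ with $c'=1$.
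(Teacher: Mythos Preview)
Your proposal is correct and follows essentially the same approach as the paper's proof: observe that for prime $q$ the necessary divisibility condition of Theorem~\ref{corbound} collapses to $b'+c'$ being a power of $q$, then invoke Theorem~\ref{combconstr} for sufficiency and both threshold bounds. Your write-up adds some helpful detail (the role of Theorem~\ref{diagconstr} in justifying the threshold, the observation $b'+c'\ge 2$, the $c=1$ sanity check), but the core argument is identical to the paper's two-sentence proof.
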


\begin{proof}
It is sufficient to note that in case of prime $q$ the  condition from  Theorem~\ref{corbound} ($b' +c'$ divides some power of $q$) is equivalent to that $b' +c'$ is a power of $q$. It only remains to use Theorem~\ref{combconstr}.
\end{proof}

In certain cases, 
we can give 
the exact value 
of the threshold $n_0$:

\begin{corollary}
Let $q$ be a prime.
\begin{enumerate}
\item If $b$ and $c$ 
satisfy $\frac{b + c}{\gcd (b,c)} = q$, then $n_0 = \gcd (b,c)$.
\item If  $b$ and $c$ are relatively prime such that $b+c = q^2$, then $n_0 = q+1$.
\end{enumerate}
\end{corollary}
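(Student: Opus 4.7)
The plan is to obtain both statements as immediate consequences of Theorem~\ref{primeq}(2), by verifying that the two-sided sandwich it provides on $n_0$ collapses to a single value in each case. No new construction is required; the heavy lifting has already been done in Theorems~\ref{combconstr} and~\ref{primeq}.

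For part~(1), write $m=\gcd(b,c)$ and set $b'=b/m$, $c'=c/m$, so that $b'+c'=q$. Thus the integer $k$ appearing in Theorem~\ref{primeq}(2) equals $1$, and $b+c=qm$. Substituting into the upper bound gives
\[
\frac{b+c-\gcd(b,c)}{q-1}=\frac{qm-m}{q-1}=m,
\]
while the second term of the lower bound is $\frac{b+c}{q}+k-1=m+0=m$. Since $c'\ge 1$ we have $b'\le q-1$, so $b\le(q-1)m$ and therefore the first lower-bound term satisfies $\frac{b}{q-1}\le m$. The lower and upper bounds thus coincide at $m$, yielding $n_0=m=\gcd(b,c)$.

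For part~(2), coprimeness of $b$ and $c$ gives $b'=b$ and $c'=c$, so $b'+c'=q^2$ and hence $k=2$. The upper bound becomes $\frac{q^2-1}{q-1}=q+1$, while the second term of the lower bound is $\frac{b+c}{q}+k-1=q+1$; also $\frac{b}{q-1}\le\frac{q^2-1}{q-1}=q+1$. Once more the bounds meet, giving $n_0=q+1$.

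The main (and only) ``obstacle'' is arithmetic bookkeeping: one must check that the two explicit terms in the lower bound of Theorem~\ref{primeq}(2) are each dominated by the upper bound, and in fact agree with it. Because $q$ is prime and $b'+c'$ is exactly a small power of $q$ (namely $q^1$ or $q^2$), this is automatic.
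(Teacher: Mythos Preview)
Your proof is correct and follows exactly the intended approach: the corollary is stated in the paper without proof precisely because it is an immediate consequence of the bounds in Theorem~\ref{primeq}(2), and you have verified cleanly that in both cases the upper bound $\frac{b+c-\gcd(b,c)}{q-1}$ coincides with the lower bound $\frac{b+c}{q}+k-1$ (and dominates $\frac{b}{q-1}$).
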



\section{Conclusion}

 We studied the problem of parameters of perfect
$2$-colorings in Hamming graphs,
considering several known and new constructions and generating
tables of small parameters.
In the conclusion, we highlight one subcase
of the general problem, namely,
the problem of the existence of perfect $2$-colorings of
$H(n,q)$
with quotient matrix
$\left(\begin{matrix}
  a & b \\ c & d
 \end{matrix}
\right)$ satisfying $a = 0$, i.e., when the first color is an independent set.
The existence of such colorings is equivalent to the existence of
an orthogonal array OA$(\frac{q^nc}{b+c},n,q,\frac{b+c}{q}-1)$
attaining the Bierbrauer--Friedman bound.
First examples of such colorings correspond to 
$1$-perfect codes, 
and next we can multiply the quotient matrix by a constant
(the corresponding orthogonal arrays can be constructed as linear codes and are
well known). In the binary case, the main recursive construction
in~\cite{FDF:PerfCol} enables one to reduce
the parameter $a$ in the quotient matrix step by step until $a=0$. 
This gives an infinite series of parameters,
including, for example, $(13,3)$-, $(29,3)$-, $(59,5)$-colorings with $a=0$
(OA$(1536,13,2,7)$, OA$(3\cdot 2^{24},29,2,15)$, OA$(5\cdot 2^{53},59,2,31)$).
In the non-binary case, the analog of the Fon-Der-Flaass construction
(Splitting~I) does not allow us to reduce  $a$, and we can only
construct perfect $2$-colorings with $a=0$ as additive codes.
The additive construction gives new parameters for nonprime prime-power $q$,
but for odd prime $q$ the question of the existence 
of $2$-colorings in $H(n,q)$ with $a=0$ 
that do not come from perfect codes is open.
The only parameters with $a=0$ for which the nonexistence of colorings is known
 are the parameters of $(14,4)$-colorings in $H(7,3)$,
corresponding to OA$(2\cdot 3^5,7,3,5)$ \cite{HSS:97b}.
In this direction, the first open questions are about 
$(25,7)$-colorings in $H(25,2)$,
$(22,5)$-colorings in $H(11,3)$, $(39,9)$-colorings in $H(13,4)$.


\section*{Acknowledgements}
This work was funded by 
the Russian Science Foundation 
under grant 18-11-00136
(Sections~1--A.2, except Theorems~\ref{Flaassstand}
and~\ref{Flaassimprov}(1), 
whose early versions 
occurred as a part of 
the graduate thesis of Aleksandr Matiushev
in the Novosibirsk State University);
the work of Anna Taranenko was 
supported in part (Section~A.3) by an award of  the contest 
``Young Russian Mathematics''.


\bibliography{k}


\newpage
 
\appendix
 
\section{Appendix. Perfect colorings in Hamming graphs of small sizes}

We use the following notations for the columns of further tables

\textbf{Lower bounds:}

$a$ -- degree bound $a \geq 0$: $n =\left\lceil \frac{b}{q-1} \right\rceil $.

$k$ -- bound  from Theorem~\ref{corbound}: $n = \frac{b+c}{q} + k -1$, where $k$ is the minimal integer such that $\frac{b+c}{\gcd(b,c)}$ divides $q^k$.

\textbf{Constructions:}

$*$ -- multiplication of length 
(Theorem~\ref{timestconstr}).

$q$ -- multiplication of alphabet 
(Theorem~\ref{altimesconstr}).

$P$ -- $1$-perfect codes, multifold $1$-perfect codes, additive codes
(Sections~\ref{s:additive} and~\ref{perfcodes}).

$F$ -- best of the splitting-I constructions (Theorems~\ref{Flaassstand} and \ref {Flaassimprov}).

$S$ -- multiplication of alphabet 
of a $1$-perfect code in $H(q+1,q)$ 
with splitting of the second color 
(splitting-II, Theorem~\ref{splitconstr}).

\medskip 

The best lower bounds and constructions for given parameters $(b,c)$ are bold in each row. 
If there is a gap between lower bounds and constructions for a $(b,c)$-coloring, then the corresponding row is highlighted by color. 
The sign ``?'' in the last column means that  no $(b,c)$-colorings
are known
in $H(n,q)$ for any $n$.


\subsection{Case $q=3$, illustrating prime $q$}\label{a:q3} 
\def\ssz{\footnotesize}
\newcommand\hlt{\rowcolor{LightCyan}}
\newcommand\hltc{\cellcolor{LightCyan}}
\definecolor{LightCyan}{rgb}{0.88,1,1}
\begin{center}
\begin{tabular}{|c|c|c||rr |r||rrr|r||}\cline{1-10}
$b{+c}$ & $b' {+}c'$ & $(b,c)$  & a & k & \bf \!\!LB& * & P & F & \bf \!\!UB   \\ \hhline{|=|=|=#==|=#===|=||}
3  &  3 &   (2,1) & \ssz\bf{ 1} & \ssz\bf{ 1} & \bf  1       & \ssz    --  & \ssz \bf{1} & \ssz     -- & \bf 1  \\\cline{1-10}
6  &  3 &   (4,2) & \ssz\bf{ 2} & \ssz\bf{ 2} & \bf  2       & \ssz\bf{2}  & \ssz     -- & \ssz    --  & \bf 2 \\\cline{1-10}
9  &  3 &   (6,3) & \ssz\bf{ 3} & \ssz\bf{ 3} & \bf  3       & \ssz\bf{3}  & \ssz      4 & \ssz      4 & \bf 3  \\
9  &  9 &   (5,4) & \ssz     3  & \ssz\bf{ 4} & \bf  4       & \ssz    --  & \ssz \bf{4} & \ssz\bf{ 4} & \bf 4 \\
9  &  9 &   (7,2) & \ssz\bf{ 4} & \ssz\bf{ 4} & \bf  4       & \ssz    --  & \ssz \bf{4} & \ssz\bf{ 4} & \bf 4 \\
9  &  9 &   (8,1) & \ssz\bf{ 4} & \ssz\bf{ 4} & \bf  4       & \ssz    --  & \ssz \bf{4} & \ssz\bf{ 4} & \bf 4  \\\cline{1-10}
12 &  3 &   (8,4) & \ssz\bf{ 4} & \ssz\bf{ 4} & \bf  4       & \ssz\bf{ 4} & \ssz     -- & \ssz     -- & \bf  4 \\\cline{1-10}
15 &  3 &  (10,5) & \ssz\bf{ 5} & \ssz\bf{ 5} & \bf  5       & \ssz\bf{ 5} & \ssz     -- & \ssz     -- & \bf  5 \\\cline{1-10}
18 &  3 &  (12,6) & \ssz\bf{ 6} & \ssz\bf{ 6} & \bf  6       & \ssz\bf{ 6} & \ssz     -- & \ssz      8 & \bf  6 \\
\hlt 18 &  9 &  (10,8) & \ssz     5  & \ssz\bf{ 7} & \bf  7       & \ssz\bf{ 8} & \ssz     -- & \ssz\bf{ 8} & \bf  8 \\
18 &  9 &  (14,4) & \ssz     7  & \ssz     7  & \bf  8$^\dag$& \ssz\bf{ 8} & \ssz     -- & \ssz\bf{ 8} & \bf  8 \\
18 &  9 &  (16,2) & \ssz\bf{ 8} & \ssz     7  & \bf  8       & \ssz\bf{ 8} & \ssz     -- & \ssz\bf{ 8} & \bf  8\\\cline{1-10}
21 &  3 &  (14,7) & \ssz\bf{ 7} & \ssz\bf{ 7} & \bf  7       & \ssz\bf{ 7} & \ssz     -- & \ssz     -- & \bf  7 \\\cline{1-10}
24 &  3 &  (16,8) & \ssz\bf{ 8} & \ssz\bf{ 8} & \bf  8       & \ssz\bf{ 8} & \ssz     -- & \ssz     -- & \bf  8 \\\cline{1-10}
27 &  3 &  (18,9) & \ssz\bf{ 9} & \ssz\bf{ 9} & \bf  9       & \ssz\bf{ 9} & \ssz     13 & \ssz    12  & \bf  9 \\
\hlt 27 &  9 & (15,12) & \ssz     8  & \ssz\bf{10} & \bf 10       & \ssz\bf{12} & \ssz     13 & \ssz\bf{12} & \bf 12 \\
\hlt 27 &  9 &  (21,6) & \ssz\bf{11} & \ssz    10  & \bf 11       & \ssz\bf{12} & \ssz     13 & \ssz\bf{12} & \bf 12 \\
27 &  9 &  (24,3) & \ssz\bf{12} & \ssz    10  & \bf 12       & \ssz\bf{12} & \ssz     13 & \ssz\bf{12} & \bf 12 \\
\hlt 27 & 27 & (14,13) & \ssz     7  & \ssz\bf{11} & \bf 11       & \ssz     -- & \ssz\bf{13} & \ssz\bf{13} & \bf 13 \\
\hlt 27 & 27 & (16,11) & \ssz     8  & \ssz\bf{11} & \bf 11       & \ssz     -- & \ssz     13 & \ssz\bf{12} & \bf 12 \\
\hlt 27 & 27 & (17,10) & \ssz     9  & \ssz\bf{11} & \bf 11       & \ssz     -- & \ssz\bf{13} & \ssz\bf{13} & \bf 13 \\
\hlt 27 & 27 &  (19,8) & \ssz    10  & \ssz\bf{11} & \bf 11       & \ssz     -- & \ssz     13 & \ssz\bf{12} & \bf 12 \\
\hlt 27 & 27 &  (20,7) & \ssz    10  & \ssz\bf{11} & \bf 11       & \ssz     -- & \ssz\bf{13} & \ssz\bf{13} & \bf 13 \\
\hlt 27 & 27 &  (22,5) & \ssz\bf{11} & \ssz\bf{11} & \bf 11       & \ssz     -- & \ssz\bf{13} & \ssz\bf{13} & \bf 13 \\
\hlt 27 & 27 &  (23,4) & \ssz\bf{12} & \ssz    11  & \bf 12       & \ssz     -- & \ssz\bf{13} & \ssz\bf{13} & \bf 13 \\
27 & 27 &  (25,2) & \ssz\bf{13} & \ssz    11  & \bf 13       & \ssz     -- & \ssz\bf{13} & \ssz\bf{13} & \bf 13 \\
27 & 27 &  (26,1) & \ssz\bf{13} & \ssz    11  & \bf 13       & \ssz     -- & \ssz\bf{13} & \ssz\bf{13} & \bf 13 \\\cline{1-10}
30 &  3 & (20,10) & \ssz\bf{10} & \ssz\bf{10} & \bf 10       & \ssz\bf{10} & \ssz     -- & \ssz     -- & \bf 10 \\\cline{1-10}
33 &  3 & (22,11) & \ssz\bf{11} & \ssz\bf{11} & \bf 11       & \ssz\bf{11} & \ssz     -- & \ssz     -- & \bf 11 \\\cline{1-10}
36 &  3 & (24,12) & \ssz\bf{12} & \ssz\bf{12} & \bf 12       & \ssz\bf{12} & \ssz     -- & \ssz     16 & \bf 12 \\
\hlt 36 &  9 & (20,16) & \ssz    10  & \ssz\bf{13} & \bf 13       & \ssz\bf{16} & \ssz     -- & \ssz\bf{16} & \bf 16 \\
\hlt 36 &  9 & (28,8)  & \ssz\bf{14} & \ssz    13  & \bf 14       & \ssz\bf{16} & \ssz     -- & \ssz\bf{16} & \bf 16 \\
36 &  9 & (32,4)  & \ssz\bf{16} & \ssz    13  & \bf 16       & \ssz\bf{16} & \ssz     -- & \ssz\bf{16} & \bf 16 \\\cline{1-10}
39 &  3 & (26,13) & \ssz\bf{13} & \ssz\bf{13} & \bf 13       & \ssz\bf{13} & \ssz     -- & \ssz     -- & \bf 13 \\\cline{1-10}
42 &  3 & (28,14) & \ssz\bf{14} & \ssz\bf{14} & \bf 14       & \ssz\bf{14} & \ssz     -- & \ssz     -- & \bf 14 \\\cline{1-10}
45 &  3 & (30,15) & \ssz\bf{15} & \ssz\bf{15} & \bf 15       & \ssz\bf{15} & \ssz     -- & \ssz     20 & \bf 15 \\
\hlt 45 &  9 & (25,20) & \ssz    13  & \ssz\bf{16} & \bf 16       & \ssz\bf{20} & \ssz     -- & \ssz\bf{20} & \bf 20 \\
\hlt 45 &  9 & (35,10) & \ssz\bf{18} & \ssz    16  & \bf 18       & \ssz\bf{20} & \ssz     -- & \ssz\bf{20} & \bf 20 \\
45 &  9 & (40,5)  & \ssz\bf{20} & \ssz    16  & \bf 20       & \ssz\bf{20} & \ssz     -- & \ssz\bf{20} & \bf 20 \\\cline{1-10}
48 &  3 & (32,16) & \ssz\bf{16} & \ssz\bf{16} & \bf 16       & \ssz\bf{16} & \ssz     -- & \ssz     -- & \bf 16 \\\cline{1-10}
51 &  3 & (34,17) & \ssz\bf{17} & \ssz\bf{17} & \bf 17       & \ssz\bf{17} & \ssz     -- & \ssz     -- & \bf 17 \\\cline{1-10}
54 &  3 & (36,18) & \ssz\bf{18} & \ssz\bf{18} & \bf 18       & \ssz\bf{18} & \ssz     -- & \ssz     24 & \bf 18 \\
\hlt 54 &  9 & (30,24) & \ssz    15  & \ssz\bf{19} & \bf 19       & \ssz\bf{24} & \ssz     -- & \ssz\bf{24} & \bf 24 \\
\hlt 54 &  9 & (42,12) & \ssz\bf{21} & \ssz    19  & \bf 21       & \ssz\bf{24} & \ssz     -- & \ssz\bf{24} & \bf 24 \\
54 &  9 & (48,6)  & \ssz\bf{24} & \ssz    19  & \bf 24       & \ssz\bf{24} & \ssz     -- & \ssz\bf{24} & \bf 24 \\
\hlt 54 & 27 & (28,26) & \ssz    14  & \ssz\bf{20} & \bf 20       & \ssz\bf{26} & \ssz     -- & \ssz\bf{26} & \bf 26 \\
\hlt 54 & 27 & (34,20) & \ssz    17  & \ssz\bf{20} & \bf 20       & \ssz\bf{26} & \ssz     -- & \ssz\bf{26} & \bf 26 \\
\hlt 54 & 27 & (38,16) & \ssz    19  & \ssz\bf{20} & \bf 20       & \ssz\bf{24} & \ssz     -- & \ssz     26 & \bf 24 \\
\hlt 54 & 27 & (40,14) & \ssz\bf{20} & \ssz\bf{20} & \bf 20       & \ssz\bf{26} & \ssz     -- & \ssz\bf{26} & \bf 26 \\
\hlt 54 & 27 & (44,10) & \ssz\bf{22} & \ssz    20  & \bf 22       & \ssz\bf{26} & \ssz     -- & \ssz\bf{26} & \bf 26 \\
\hlt 54 & 27 &  (46,8) & \ssz\bf{23} & \ssz    20  & \bf 23       & \ssz\bf{26} & \ssz     -- & \ssz\bf{26} & \bf 26 \\
\hlt 54 & 27 &  (50,4) & \ssz\bf{25} & \ssz    20  & \bf 25       & \ssz\bf{26} & \ssz     -- & \ssz\bf{26} & \bf 26 \\
54 & 27 &  (52,2) & \ssz\bf{26} & \ssz    20  & \bf 26       & \ssz\bf{26} & \ssz     -- & \ssz\bf{26} & \bf 26 \\\cline{1-10}
\end{tabular}
\end{center}

\dag : $(14,4)$-colorings in $H(7,3)$ do not exist because there are no OA$(2\cdot 3^5,7,3,5)$~\cite{HSS:97b}.

\newpage
\subsection{Case $q=4$, illustrating a prime power $q$}\label{a:q4}

\begin{center}
\begin{tabular}{|c|c|c||rr |r||rrrrr|r||} \cline{1-12}
$b+c$ & $b' +c'$ & $(b,c)$  & a & k & \bf \!\!LB& * & q & P & F & S  & \bf \!\!UB  \\ \hhline{|=|=|=#==|=#=====|=#}
 4 &  2 & ( 2, 2) & \ssz\bf{ 1} & \ssz\bf{ 1} & \bf  1 & \ssz    --  & \ssz\bf{ 1} & \ssz\bf{1} & \ssz    --  & \ssz   -- & \bf 1   \\
 4 &  4 & ( 3, 1) & \ssz\bf{ 1} & \ssz\bf{ 1} & \bf  1 & \ssz    --  & \ssz    --  & \ssz\bf{1} & \ssz    --  & \ssz   -- & \bf 1   \\\cline{1-12}
 8 &  2 & ( 4, 4) & \ssz\bf{ 2} & \ssz\bf{ 2} & \bf  2 & \ssz\bf{ 2} & \ssz\bf{ 2} & \ssz   --  & \ssz    --  & \ssz   -- & \bf 2   \\
 8 &  4 & ( 6, 2) & \ssz\bf{ 2} & \ssz\bf{ 2} & \bf  2 & \ssz\bf{ 2} & \ssz     3  & \ssz   --  & \ssz    --  & \ssz   -- & \bf 2   \\
 8 &  8 & ( 5, 3) & \ssz     2  & \ssz\bf{ 3} & \bf  3 & \ssz    --  & \ssz    --  & \ssz   --  & \ssz    --  & \ssz\bf{3}& \bf 3   \\\cline{1-12}
12 &  2 & ( 6, 6) & \ssz     2  & \ssz\bf{ 3} & \bf  3 & \ssz\bf{ 3} & \ssz\bf{ 3} & \ssz   --  & \ssz    --  & \ssz   -- & \bf 3   \\
12 &  4 & ( 9, 3) & \ssz\bf{ 3} & \ssz\bf{ 3} & \bf  3 & \ssz\bf{ 3} & \ssz    --  & \ssz   --  & \ssz    --  & \ssz   -- & \bf 3   \\\cline{1-12}
16 &  2 & ( 8, 8) & \ssz     3  & \ssz\bf{ 4} & \bf  4 & \ssz\bf{ 4} & \ssz\bf{ 4} & \ssz    5  & \ssz     5  & \ssz   -- & \bf 4   \\
16 &  4 & (12, 4) & \ssz\bf{ 4} & \ssz\bf{ 4} & \bf  4 & \ssz\bf{ 4} & \ssz     6  & \ssz    5  & \ssz     5  & \ssz   -- & \bf 4   \\
16 &  8 & (10, 6) & \ssz     4  & \ssz\bf{ 5} & \bf  5 & \ssz     6  & \ssz     6  & \ssz\bf{5} & \ssz\bf{ 5} & \ssz   -- & \bf 5   \\
16 &  8 & (14, 2) & \ssz\bf{ 5} & \ssz\bf{ 5} & \bf  5 & \ssz    --  & \ssz     7  & \ssz\bf{5} & \ssz\bf{ 5} & \ssz   -- & \bf 5   \\
16 & 16 & ( 9, 7) & \ssz     3  & \ssz\bf{ 5} & \bf  5 & \ssz    --  & \ssz    --  & \ssz\bf{5} & \ssz\bf{ 5} & \ssz   -- & \bf 5   \\
16 & 16 & (11, 5) & \ssz     4  & \ssz\bf{ 5} & \bf  5 & \ssz    --  & \ssz    --  & \ssz\bf{5} & \ssz\bf{ 5} & \ssz   -- & \bf 5   \\
16 & 16 & (13, 3) & \ssz\bf{ 5} & \ssz\bf{ 5} & \bf  5 & \ssz    --  & \ssz    --  & \ssz\bf{5} & \ssz\bf{ 5} & \ssz   -- & \bf 5   \\
16 & 16 & (15, 1) & \ssz\bf{ 5} & \ssz\bf{ 5} & \bf  5 & \ssz    --  & \ssz    --  & \ssz\bf{5} & \ssz\bf{ 5} & \ssz   -- & \bf 5   \\\cline{1-12}
20 &  2 & (10,10) & \ssz     4  & \ssz\bf{ 5} & \bf  5 & \ssz\bf{ 5} & \ssz\bf{ 5} & \ssz   --  & \ssz    --  & \ssz   -- & \bf 5   \\
20 &  4 & (15, 5) & \ssz     5  & \ssz\bf{ 5} & \bf  5 & \ssz\bf{ 5} & \ssz    --  & \ssz   --  & \ssz    --  & \ssz   -- & \bf 5   \\\cline{1-12}
24 &  2 & (12,12) & \ssz     4  & \ssz\bf{ 6} & \bf  6 & \ssz\bf{ 6} & \ssz\bf{ 6} & \ssz   --  & \ssz    --  & \ssz   -- & \bf 6   \\
24 &  4 & (18, 6) & \ssz\bf{ 6} & \ssz\bf{ 6} & \bf  6 & \ssz\bf{ 6} & \ssz    --  & \ssz   --  & \ssz    --  & \ssz   -- & \bf 6   \\
 24 &  8 & (15, 9) & \ssz     5  & \ssz\bf{ 7} & \bf  7 & \ssz\bf{ 9} & \ssz    --  & \ssz  \bf{{7}}  & \ssz    --  & \ssz   -- & \bf 7  \\
 24 &  8 & (21, 3) & \ssz\bf{ 7} & \ssz\bf{ 7} & \bf  7 & \ssz    --  & \ssz    --  & \ssz   \bf{{7}}  & \ssz    --  & \ssz   -- & \bf 7  \\\cline{1-12}
28 &  2 & (14,14) & \ssz     5  & \ssz\bf{ 7} & \bf  7 & \ssz\bf{ 7} & \ssz\bf{ 7} & \ssz   --  & \ssz    --  & \ssz   -- & \bf 7   \\
28 &  4 & (21, 7) & \ssz\bf{ 7} & \ssz\bf{ 7} & \bf  7 & \ssz\bf{ 7} & \ssz    --  & \ssz   --  & \ssz    --  & \ssz   -- & \bf 7   \\\cline{1-12}
32 &  2 & (16,16) & \ssz     6  & \ssz\bf{ 8} & \bf  8 & \ssz\bf{ 8} & \ssz\bf{ 8} & \ssz   --  & \ssz    10  & \ssz   -- & \bf 8   \\
32 &  4 & (24, 8) & \ssz\bf{ 8} & \ssz\bf{ 8} & \bf  8 & \ssz\bf{ 8} & \ssz    12  & \ssz   --  & \ssz    10  & \ssz   -- & \bf 8   \\
\hlt 32 &  8 & (20,12) & \ssz     7  & \ssz\bf{ 9} & \bf  9 & \ssz\bf{10} & \ssz    12  & \ssz   --  & \ssz\bf{10} & \ssz   -- & \bf10  \\
32 &  8 & (28, 4) & \ssz\bf{10} & \ssz     9  & \bf 10 & \ssz\bf{10} & \ssz    14  & \ssz   --  & \ssz\bf{10} & \ssz   -- & \bf10   \\
\hlt 32 & 16 & (18,14) & \ssz     6  & \ssz\bf{ 9} & \bf  9 & \ssz\bf{10} & \ssz    12  & \ssz   --  & \ssz\bf{10} & \ssz   -- & \bf10  \\
\hlt 32 & 16 & (22,10) & \ssz     8  & \ssz\bf{ 9} & \bf  9 & \ssz\bf{10} & \ssz    13  & \ssz   --  & \ssz\bf{10} & \ssz   -- & \bf10  \\
\hlt 32 & 16 & (26, 6) & \ssz \bf{9} & \ssz\bf{ 9} & \bf  9 & \ssz\bf{10} & \ssz    13  & \ssz   --  & \ssz\bf{10} & \ssz   -- & \bf10  \\
32 & 16 & (30, 2) & \ssz\bf{10} & \ssz     9  & \bf 10 & \ssz\bf{10} & \ssz    15  & \ssz   --  & \ssz\bf{10} & \ssz   -- & \bf10   \\
\hlt 32 & 32 & (17,15) & \ssz     6  & \ssz\bf{10} & \bf 10 & \ssz    --  & \ssz    --  & \ssz   --  & \ssz    --  & \ssz   -- &     ?  \\
\hlt 32 & 32 & (19,13) & \ssz     7  & \ssz\bf{10} & \bf 10 & \ssz    --  & \ssz    --  & \ssz   --  & \ssz    --  & \ssz   -- &     ?  \\
\hlt 32 & 32 & (21,11) & \ssz     7  & \ssz\bf{10} & \bf 10 & \ssz    --  & \ssz    --  & \ssz   --  & \ssz    --  & \ssz   -- &     ?  \\
\hlt 32 & 32 & (23, 9) & \ssz     8  & \ssz\bf{10} & \bf 10 & \ssz    --  & \ssz    --  & \ssz   --  & \ssz\bf{14} & \ssz   -- & \bf14  \\
\hlt 32 & 32 & (25, 7) & \ssz     9  & \ssz\bf{10} & \bf 10 & \ssz    --  & \ssz    --  & \ssz   --  & \ssz    --  & \ssz   -- &     ?  \\
\hlt 32 & 32 & (27, 5) & \ssz     9  & \ssz\bf{10} & \bf 10 & \ssz    --  & \ssz    --  & \ssz   --  & \ssz    --  & \ssz   -- &     ?  \\
\hlt 32 & 32 & (29, 3) & \ssz\bf{10} & \ssz\bf{10} & \bf 10 & \ssz    --  & \ssz    --  & \ssz   --  & \ssz\bf{14} & \ssz   -- & \bf14  \\\cline{1-12}
36 &  2 & (18,18) & \ssz     6  & \ssz\bf{ 9} & \bf  9 & \ssz\bf{ 9} & \ssz\bf{ 9} & \ssz   --  & \ssz    --  & \ssz   -- & \bf 9   \\
36 &  4 & (27, 9) & \ssz\bf{ 9} & \ssz\bf{ 9} & \bf  9 & \ssz\bf{ 9} & \ssz    --  & \ssz   --  & \ssz    --  & \ssz   -- & \bf 9   \\\cline{1-12}
40 &  2 & (20,20) & \ssz     7  & \ssz\bf{10} & \bf 10 & \ssz\bf{10} & \ssz\bf{10} & \ssz   --  & \ssz    --  & \ssz   -- & \bf10   \\
40 &  4 & (30,10) & \ssz\bf{10} & \ssz\bf{10} & \bf 10 & \ssz\bf{10} & \ssz    15  & \ssz   --  & \ssz    --  & \ssz   -- & \bf10   \\
\hlt 40 &  8 & (25,15) & \ssz     9  & \ssz\bf{11} & \bf 11 & \ssz\bf{15} & \ssz    --  & \ssz   \bf{{12}}  & \ssz    --  & \ssz   -- & \bf12  \\
 40 &  8 & (35, 5) & \ssz\bf{12} & \ssz    11  & \bf 12 & \ssz    --  & \ssz    --  & \ssz   \bf{{12}}  & \ssz    --  & \ssz   -- &    \bf12   \\\cline{1-12}
44 &  2 & (22,22) & \ssz     8  & \ssz\bf{11} & \bf 11 & \ssz\bf{11} & \ssz\bf{11} & \ssz   --  & \ssz    --  & \ssz   -- & \bf11   \\
44 &  4 & (33,11) & \ssz\bf{11} & \ssz\bf{11} & \bf 11 & \ssz\bf{11} & \ssz    --  & \ssz   --  & \ssz    --  & \ssz   -- & \bf11   \\\cline{1-12}
48 &  2 & (24,24) & \ssz     8  & \ssz\bf{12} & \bf 12 & \ssz\bf{12} & \ssz\bf{12} & \ssz   --  & \ssz    15  & \ssz   -- & \bf12   \\
48 &  4 & (36,12) & \ssz\bf{12} & \ssz\bf{12} & \bf 12 & \ssz\bf{12} & \ssz    18  & \ssz   --  & \ssz    15  & \ssz   -- & \bf12   \\
\hlt 48 &  8 & (30,18) & \ssz    10  & \ssz\bf{13} & \bf 13 & \ssz\bf{14} & \ssz    18  & \ssz   \bf{{14}}  & \ssz\bf{15} & \ssz   -- & \bf14  \\
 48 &  8 & (42, 6) & \ssz\bf{14} & \ssz    13  & \bf 14 & \ssz\bf{14} & \ssz    21  & \ssz   \bf{{14}}  & \ssz\bf{15} & \ssz   -- & \bf14  \\
\hlt 48 & 16 & (27,21) & \ssz     9  & \ssz\bf{13} & \bf 13 & \ssz\bf{15} & \ssz    --  & \ssz   --  & \ssz\bf{15} & \ssz   -- & \bf15  \\
\hlt 48 & 16 & (33,15) & \ssz    11  & \ssz\bf{13} & \bf 13 & \ssz\bf{15} & \ssz    --  & \ssz   --  & \ssz\bf{15} & \ssz   -- & \bf15  \\
\hlt 48 & 16 & (39, 9) & \ssz\bf{13} & \ssz\bf{13} & \bf 13 & \ssz\bf{15} & \ssz    --  & \ssz   --  & \ssz\bf{15} & \ssz   -- & \bf15  \\
48 & 16 & (45, 3) & \ssz\bf{15} & \ssz    13  & \bf 15 & \ssz\bf{15} & \ssz    --  & \ssz   --  & \ssz\bf{15} & \ssz   -- & \bf15   \\   \cline{1-12}
\end{tabular}
\end{center}

\newpage
\subsection{Case $q=6$, illustrating $q$ which is not a prime power}\label{a:q6}
 \begin{center}
\begin{tabular}{|c|c|c||rr |r||rrrr|r||}\cline{1-11}
$b+c$	& $b' +c'$	& $(b,c)$ 	& a	& k	& \bf \!\!LB& *	& q	& P	& S  	& \bf \!\!UB \\ \hhline{|=|=|=#==|=#====|=#}
6	& 2	& (3,3) 	&  \ssz \bf{1}	& \ssz\bf{1}	& \bf{1}	&\ssz --	&\ssz \bf{1}	& \ssz\bf{1}	& \ssz --	&     \bf{1}	 \\
6	& 3	& (4,2) 	&  \ssz \bf{1}	& \ssz\bf{1}	& \bf{1}	&\ssz --	&\ssz \bf{1}	& \ssz\bf{1}	& \ssz -- 	&     \bf{1} 	 \\
6	& 6	& (5,1) 	&  \ssz \bf{1}	& \ssz\bf{1}	& \bf{1}	&\ssz --	&\ssz --	& \ssz\bf{1}	& \ssz -- 	&     \bf{1} 	 \\\cline{1-11}
12	& 2	& (6,6) 	&  \ssz \bf{2}	& \ssz\bf{2}	& \bf{2}	&\ssz \bf{2}	&\ssz \bf{2}	& \ssz    --	& \ssz 3	&     \bf{2}	  \\ \cline{1-11}
12	& 3	& (8,4) 	&  \ssz \bf{2}	& \ssz\bf{2}	& \bf{2}	&\ssz \bf{2}	&\ssz \bf{2}	& \ssz    --	& \ssz -- 	&     \bf{2}	 \\
12	& 4	& (9,3) 	&  \ssz 2	& \ssz\bf{3}	& \bf{3}	&\ssz --	&\ssz \bf{3}	& \ssz    --	& \ssz\bf{3}	&     \bf{3}	 \\
12	& 6	& (10,2)	&  \ssz \bf{2}	& \ssz\bf{2}	& \bf{2}	&\ssz \bf{2}	&\ssz --	& \ssz    --	& \ssz --	&     \bf{2}	 \\
\hlt 12	& 12	& (7,5) 	&  \ssz 2	& \ssz\bf{3}	& \bf{3}	&\ssz --    	&\ssz --	& \ssz    --	& \ssz --	&      ?    	 \\\cline{1-11}
18	& 2	& (9,9) 	&  \ssz 2	& \ssz\bf{3}	& \bf{3}	&\ssz \bf{3}	&\ssz --	& \ssz    --	& \ssz -- 	&     \bf{3}	 \\
18	& 3	& (12,6)	&  \ssz \bf{3}	& \ssz\bf{3}	& \bf{3}	&\ssz \bf{3}	&\ssz \bf{3}	& \ssz    --	& \ssz --	&     \bf{3}	 \\
18	& 6	& (15,3)	&  \ssz \bf{3}	& \ssz\bf{3}	& \bf{3}	&\ssz \bf{3}	&\ssz --	& \ssz    --	& \ssz --	&     \bf{3}	 \\
18	& 9	& (10,8)	&  \ssz 2	& \ssz\bf{4}	& \bf{4}	&\ssz --	&\ssz \bf{4}	& \ssz    --	& \ssz\bf{4}	&     \bf{4}	 \\
18	& 9	& (14,4) 	&  \ssz 3	& \ssz\bf{4}	& \bf{4}	&\ssz --	&\ssz \bf{4}	& \ssz    --	& \ssz -- 	&      \bf{4} 	 \\
18	& 9	& (16,2) 	&  \ssz \bf{4}	& \ssz\bf{4}	& \bf{4}	&\ssz --	&\ssz \bf{4}	& \ssz    --	& \ssz -- 	&     \bf{4}	 \\
\hlt 18	& 18	& (11,7)	&  \ssz 3	& \ssz\bf{4}	& \bf{4}	&\ssz --	&\ssz --	& \ssz    --	& \ssz -- 	&     ?	 \\
\hlt 18	& 18	& (13,5)	&  \ssz 3	& \ssz\bf{4}	& \bf{4}	&\ssz --	&\ssz --	& \ssz    --	& \ssz -- 	&     ?	 \\\cline{1-11}
24	& 2	& (12,12)	&  \ssz 3	& \ssz\bf{4}	& \bf{4}	&\ssz \bf{4}	&\ssz \bf{4}	& \ssz    --	& \ssz -- 	&     \bf{4}	 \\
24	& 3	& (16,8)	&  \ssz \bf{4}	& \ssz\bf{4}	& \bf{4}	&\ssz \bf{4}	&\ssz \bf{4}	& \ssz    --	& \ssz -- 	&     \bf{4}	 \\
\hlt 24	& 4	& (18,6)	&  \ssz 4	& \ssz\bf{5}	& \bf{5}	&\ssz \bf{6}	&\ssz \bf{6}	& \ssz    --	& \ssz --	&     \bf{6}	 \\
\hlt 24	& 6	& (20,4)	&  \ssz \bf{4}	& \ssz\bf{4}	& \bf{4}	&\ssz \bf{5}	&\ssz --	& \ssz    --	& \ssz --	&     \bf{5}	 \\
24	& 8	& (15,9)	&  \ssz 3	& \ssz\bf{6}	& \bf{6}	&\ssz --	&\ssz \bf{6}	& \ssz    --	& \ssz --	&     \bf{6}	 \\
\hlt 24	& 8	& (21,3)	&  \ssz 5	& \ssz\bf{6}	& \bf{6}	&\ssz --	&\ssz \bf{7}	& \ssz    --	& \ssz --	&     \bf{7}	 \\
\hlt 24	& 12	& (14,10)	&  \ssz 3	& \ssz\bf{5}	& \bf{5}	&\ssz --	&\ssz --	& \ssz    --	& \ssz -- 	&      ? 	 \\
\hlt 24	& 12	& (22,2)	&  \ssz \bf{5}	& \ssz\bf{5}	& \bf{5}	&\ssz --	&\ssz --	& \ssz    --	& \ssz -- 	&      ? 	 \\
\hlt 24	& 24	& (13,11)	&  \ssz 3	& \ssz\bf{6}	& \bf{6}	&\ssz --	&\ssz --	& \ssz    --	& \ssz --	&      ?	 \\
\hlt 24	& 24	& (17,7)	&  \ssz 4	& \ssz\bf{6}	& \bf{6}	&\ssz --	&\ssz --	& \ssz    --	& \ssz --	&      ?	 \\
\hlt 24	& 24	& (19,5)	&  \ssz 4	& \ssz\bf{6} 	& \bf{6} 	&\ssz --	&\ssz --	& \ssz    --	& \ssz --	&      ?	 \\\cline{1-11}
30	& 2	& (15,15)	&  \ssz 3	& \ssz\bf{5}	& \bf{5}	&\ssz \bf{5}	&\ssz \bf{5}	& \ssz    --	& \ssz --	&      \bf{5}	 \\
30	& 3	& (20,10)	&  \ssz 4	& \ssz\bf{5}	& \bf{5}	&\ssz \bf{5}	&\ssz \bf{5}	& \ssz    --	& \ssz --	&      \bf{5}	 \\
30	& 6	& (25,5)	&  \ssz \bf{5}	& \ssz\bf{5}	& \bf{5}	&\ssz \bf{5}	&\ssz --	& \ssz    --	& \ssz --	&      \bf{5}	 \\\cline{1-11}
36	& 2	& (18,18)	&  \ssz 4	& \ssz\bf{6}	& \bf{6}	&\ssz \bf{6}	&\ssz 9 	& \ssz    --	& \ssz --	&      \bf{6}	 \\
36	& 3	& (24,12)	&  \ssz 5	& \ssz\bf{6}	& \bf{6}	&\ssz \bf{6}	&\ssz \bf{6}	& \ssz    --	& \ssz -- 	&      \bf{6} 	 \\
\hlt 36	& 4	& (27,9)	&  \ssz 6	& \ssz\bf{7}	& \bf{7}	&\ssz \bf{9}	&\ssz \bf{9}	& \ssz    --	& \ssz --	&      \bf{9}	 \\
36	& 6	& (30,6)	&  \ssz \bf{6}	& \ssz\bf{6}	& \bf{6}	&\ssz \bf{6}	&\ssz --	& \ssz    --	& \ssz --	&      \bf{6}	 \\
\hlt 36	& 9	& (20,16)	&  \ssz 4	& \ssz\bf{7}	& \bf{7}	&\ssz \bf{8}	&\ssz \bf{8}	& \ssz    --	& \ssz --	&      \bf{8}	 \\
\hlt 36	& 9	& (28,8)	&  \ssz 6	& \ssz\bf{7}	& \bf{7}	&\ssz \bf{8}	&\ssz \bf{8}	& \ssz    --	& \ssz --	&      \bf{8}	 \\
\hlt 36	& 9	& (32,4)	&  \ssz \bf{7}	& \ssz\bf{7}	& \bf{7}	&\ssz \bf{8}	&\ssz \bf{8}	& \ssz    --	& \ssz --	&      \bf{8}	 \\
\hlt 36	& 12	& (21,15)	&  \ssz 5	& \ssz\bf{7}	& \bf{7}	&\ssz --	&\ssz --	& \ssz    --	& \ssz --	&      ?	 \\
\hlt 36	& 12	& (33,3)	&  \ssz \bf{7}	& \ssz\bf{7}	& \bf{7}	&\ssz --	&\ssz --	& \ssz    --	& \ssz --	&      ?	 \\
\hlt 36	& 18	& (22,14)	&  \ssz 5	& \ssz\bf{7}	& \bf{7}	&\ssz --	&\ssz --	& \ssz    --	& \ssz --	&      ?	 \\
\hlt 36	& 18	& (26,10)	&  \ssz 6	& \ssz\bf{7}	& \bf{7}	&\ssz --	&\ssz --	& \ssz    --	& \ssz --	&      ?	 \\
\hlt 36	& 18	& (34,2)	&  \ssz \bf{7}	& \ssz\bf{7}	& \bf{7}	&\ssz --	&\ssz --	& \ssz    --	& \ssz --	&      ?	 \\
\hlt 36	& 36	& (19,17)	&  \ssz 4	& \ssz\bf{7}	& \bf{7}	&\ssz --	&\ssz --	& \ssz    --	& \ssz --	&      ?	 \\
\hlt 36	& 36	& (23,13)	&  \ssz 5	& \ssz\bf{7}	& \bf{7}	&\ssz --	&\ssz --	& \ssz    --	& \ssz -- 	&      ? 	 \\
\hlt 36	& 36	& (25,11)	&  \ssz 5	& \ssz\bf{7}	& \bf{7}	&\ssz --	&\ssz --	& \ssz    --	& \ssz --	&      ?	 \\
\hlt 36	& 36	& (29,7)	&  \ssz 6	& \ssz\bf{7}	& \bf{7}	&\ssz --	&\ssz --	& \ssz    --	& \ssz --	&      ?	 \\
\hlt 36	& 36	& (31,5)	&  \ssz \bf{7}	& \ssz\bf{7}	& \bf{7}	&\ssz --	&\ssz --	& \ssz    --	& \ssz --	&      ?	 \\
\hlt 36	& 36	& (35,1)	&  \ssz 7	& \ssz     7	&\bf{8}$^\ddag$	&\ssz --	&\ssz --	& \ssz    --	& \ssz --	&      ?	 \\\cline{1-11}
\end{tabular}
\end{center}

\ddag : there are no $1$-perfect codes in $H(7,6)$~\cite{GolombPosner64}, so there are no $(35,1)$-colorings in $H(7,6)$.

\end{document}